\documentclass[preprint,12pt]{elsarticle}
\usepackage{graphicx}
\usepackage{dsfont}
\usepackage{amsmath,amssymb,amsthm,xcolor,mathabx,float,multirow}
\usepackage{hyperref}
\usepackage{amsbsy}

\usepackage{algorithmic}
\usepackage[ruled,vlined]{algorithm2e}
\usepackage[capitalize,nameinlink,noabbrev]{cleveref}
 \setcounter{MaxMatrixCols}{20}
\usepackage{tikzit}

\tikzstyle{new style 0}=[fill=white, draw=black, shape=circle]
\tikzstyle{new style 1}=[fill=black, draw=black, shape=circle]

\tikzstyle{new edge style 0}=[-]
\tikzstyle{new edge style 1}=[->]

\newcommand{\trans  }{^\top}

\newcommand{\Z}{\mathcal{Z}}
\newcommand{\skw}{\rm Skew}
\newcommand{\sym}{\rm Sym}
\newcommand{\mat}{\mathbb{R}}
\newcommand{\lexvec}{\textrm{\bf vec}_{\,\boxbackslash}}

\DeclareMathOperator{\tr}{tr}
\newtheorem{theorem}{Theorem}[section]
\newtheorem{proposition}[theorem]{Proposition}
\newtheorem{lemma}[theorem]{Lemma}
\newtheorem{Example}[theorem]{Example}
\newtheorem{corollary}[theorem]{Corollary}
\newtheorem{observation}[theorem]{Observation}




\journal{Linear Algebra and Its Applications}

\begin{document}

\begin{frontmatter}

\title{The Strong Singular Value Property for Matrices}


\author[label1]{Caleb Cheung} 
\author[label1]{Bryan Shader}
\affiliation[label1]{organization={Department of Mathematics and Statistics, University of Wyoming},
            addressline={1000 E. University Ave.}, 
            city={Laramie},
            postcode={$82072$}, 
            state={WY},
            country={USA}
            \\
            ccheung@uwyo.edu \quad bshader@uwyo.edu}
\begin{abstract}
A new property, the  strong singular value property, is 
introduced, developed, and utilized to study the problem of which lists of nonnegative real numbers occur as the  singular values of a matrix with a prescribed zero-nonzero pattern. 
\end{abstract}

\begin{keyword}
Strong property of matrix \sep  singular values \sep inverse problem \sep  zero-nonzero pattern. 
\MSC 
05C50 \sep 15A18 \sep 15A29 \sep  15B57 \sep 58C15.
\end{keyword}

\end{frontmatter}

\section{Introduction}
Throughout, all matrices are over the reals. For nonnegative integers $m$ and $n$,  $\skw (n)$ denotes the
set of all $n \times n$ real skew-symmetric matrices,  $\sym(n)$ denotes the set of all
$n\times n$ real symmetric matrices, and  $\mat^{m\times n}$ denotes the set of all $m \times n$ real
matrices. We consider $\mat^{m\times n}$ as an inner product space with inner product
between $A,B \in \mat^{m\times n}$ given by $\langle A, B\rangle = \tr(A\trans  B)$. 
Here $\trans$ and $\mbox{tr}$ denote the transpose and trace of a matrix, respectively. For an $m\times n$ matrix $A$ and $k \times \ell$ matrix $B$, the \textit{direct sum} of $A$ and $B$, denoted by  $A \oplus B$, is the $(m + k) \times (n+ \ell)$ block diagonal matrix
\begin{align*}
    A \oplus B = \begin{bmatrix}
        A & O\\
        O & B\\
    \end{bmatrix}.
\end{align*}
Throughout the paper, $O$
denotes a zero matrix, $\mathbf 0$ a column vector of zeros,
$J$ an all-ones matrix, and $I$ an identity matrix of appropriate 
size.  Vectors are in boldfaced.

Given an $m\times n$ matrix $A$ with $m\leq n$, 
the {\it singular values} of $A$ are the nonnegative square roots of  eigenvalues of $AA\trans$.  
We use $\Sigma(A)$ to denote the list of singular values of $A$.
As customary, when $\Sigma(A)= \{ \sigma_1, \ldots, \sigma_m\}$ we assume that  $\sigma_1 \geq \sigma_2 \geq \cdots \geq \sigma_m$. 

A \textit{pattern} is a matrix $P=[p_{ij}]$ each of whose entries belongs to $\{0,1\}$.
A \textit{superpattern} of $P$ is 
a pattern obtained from $P$ by changing some of its zeros to ones. 
The \textit{pattern} of the matrix $A$ is the matrix obtained from $A$ by replacing 
each nonzero entry by a $1$.  The set of all matrices with  pattern $P$ is 
denoted by $\Z(P)$. 
The {\it bigraph} of the $m\times n$ matrix $A=[a_{ij}]$ is denoted by 
$\mathcal{B}(A)$, has vertex set $\{1,2,\ldots, m,1',2', \ldots, n'\}$, and an edge joining vertex $i$ to vertex $j'$ if and only if $a_{ij}\neq 0$.

The \textit{Inverse Singular Value Problem for a pattern} (or ISVP-$P$ for short)
is:
\begin{quote}
Given an $m \times n$ pattern $P$ with $m\leq n$ and a non-increasing list 
$\sigma_1\geq \sigma_2 \geq \cdots \geq \sigma_m\geq 0$ of real numbers, does there 
exist a matrix $A$ with pattern $P$ and  singular values $\sigma_1, \ldots, \sigma_m$?
\end{quote}                                                           
The outline of the remainder of the paper is as follows.
In Section 2, we study the ISVP-$P$ for a variety of lists and patterns. In particular, we show that the ISVP-$P$ need not be well-behaved with respect to inserting new nonzero positions into a pattern. Section 3 introduces the Strong Singular Value Property (SSVP for short) for matrices and gives various constructions of matrices 
with the new property. Section 4 presents fundamental results about the SSVP.  The Superpattern Theorem asserts that if $A$ has the SSVP  and pattern $P$, one can solve the ISVP-$P$ for $\Sigma(A)$ for every superpattern of $P$.  The Bifurcation Theorem  asserts that if $A$ has the SSVP and pattern $P$ and nearby $A$ there are always matrices with singular values with a certain property (e.g., two multiple singular values), then one can find a matrix with that property and pattern $P$. The Matrix Liberation Theorem is a  variant of the Superpattern Theorem that can be used when the matrix does not have the SSVP. The proofs of these results are quite analytic and technical, and are left until Section 6.  In Section 5
we apply the fundamental results to establish several results; e.g., 
a characterization of which $n\times n$ patterns allow every list of $n$ distinct nonnegative real numbers as a list of singular values.

\section{Inverse Singular Value Problems}
We begin with a few instances where much can be said about the  ISVP-$P$.  First, we consider the case that the singular values are all equal. 
\begin{proposition}
Let $P$ be an $m \times n$ pattern with $m\leq n$.
Then there is a matrix with pattern $P$ with all singular values equal to $0$ if and only if $P=O$.
\end{proposition}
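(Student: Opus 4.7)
The plan is to prove both directions by reducing ``all singular values are $0$'' to ``the matrix itself is zero.''

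For the backward direction, if $P = O$ then the only matrix $A$ with pattern $P$ is $A = O$. Since $AA\trans = O$, every eigenvalue of $AA\trans$ is $0$, so every singular value of $A$ is $0$.

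For the forward direction, suppose $A$ has pattern $P$ and $\Sigma(A) = \{0,0,\ldots,0\}$. By the definition of singular values, every eigenvalue of $AA\trans$ equals $0$. Since $AA\trans$ is a real symmetric (in fact positive semidefinite) matrix, the spectral theorem gives $AA\trans = O$. Taking the trace, $0 = \tr(AA\trans) = \langle A, A \rangle = \sum_{i,j} a_{ij}^2$, and hence $a_{ij} = 0$ for every $i,j$. Therefore $A = O$, which forces $P = O$.

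I do not anticipate a genuine obstacle here: the proposition is essentially a packaging of the elementary identity $\tr(AA\trans) = \|A\|_F^2$. The only point that deserves care is to avoid any circularity in how singular values are defined versus how they are computed; sticking to the paper's stated definition (nonnegative square roots of eigenvalues of $AA\trans$) makes the argument immediate via the Frobenius norm identity.
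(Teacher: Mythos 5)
Your proof is correct but proceeds by a different route than the paper. The paper argues in one line via the identity ``rank equals the number of nonzero singular values,'' so all singular values zero forces rank zero, hence $A=O$ and $P=O$. You instead go from the definition: all eigenvalues of $AA\trans$ equal $0$, the spectral theorem gives $AA\trans = O$, and the Frobenius-norm identity $\tr(AA\trans) = \sum_{i,j} a_{ij}^2$ forces $A = O$. Both are elementary, but they lean on different facts. The paper's argument is shorter and ties the proposition to the rank theme that recurs later (e.g., in Theorem~2.3 on rank~$1$ and rank~$m$ realizations). Your argument is more self-contained, working directly from the paper's stated definition of singular values without invoking the rank characterization, which you correctly identify as the point deserving care. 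One minor remark: you can skip the spectral-theorem step entirely, since $\tr(AA\trans)$ equals the sum of the eigenvalues of $AA\trans$, which is already $0$ by hypothesis; you do not need to first establish $AA\trans = O$ to take the trace.
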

\begin{proof}
    As the rank of a matrix equals the number of nonzero singular values of the matrix, if there is a matrix with pattern $P$ having all singular values $0$, then $P=O$. 
    Conversely,  if $P=O$, then $P$ itself has all singular values equal to $0$. 
\end{proof}

A matrix $A$ has {\it row-orthonormal} provided 
$AA\trans=I$. 
Patterns of row-orthonormal matrices have been studied in \cite{CHRSS2002, CS2020, F1964, JWP98, W96}.  The diagonal 
matrix whose diagonal entries are $d_1, \ldots, d_m$ is denoted by $\mbox{diag}(d_1, \ldots, d_m)$.

\begin{theorem}
\label{ortho}
Let $P$ be an $m\times n$ pattern with $m\leq n$.
Then there exists a matrix with pattern $P$ having 
all singular values equal and nonzero if and only if 
there is a row-orthonormal matrix with pattern $P$.
Moreover, in this case,   each list of $m$ positive real numbers is 
the list of singular values of a matrix with pattern $P$.
\end{theorem}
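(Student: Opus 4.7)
The plan is to prove both directions of the equivalence together with the moreover statement through the single observation that left-multiplication by an invertible positive diagonal matrix preserves the zero-nonzero pattern while rescaling the singular values in a controlled way.

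For the forward direction, I would take a matrix $A$ with pattern $P$ whose singular values are all equal to a common positive value $\sigma$. Since the singular values of $A$ are the nonnegative square roots of the eigenvalues of $AA\trans$, the hypothesis forces $AA\trans = \sigma^2 I$. The rescaled matrix $\sigma^{-1}A$ then satisfies $(\sigma^{-1}A)(\sigma^{-1}A)\trans = I$, so it is row-orthonormal, and it retains the pattern $P$ because $\sigma^{-1}$ is a nonzero scalar.

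The reverse direction is essentially a matter of unpacking definitions: a row-orthonormal matrix $B$ satisfies $BB\trans = I$, so every eigenvalue of $BB\trans$ equals $1$ and hence every singular value of $B$ equals $1$, which is a common nonzero value.

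For the moreover statement, given a row-orthonormal matrix $B$ with pattern $P$ and any non-increasing list $\sigma_1 \geq \cdots \geq \sigma_m > 0$, I would form $C = DB$ with $D = \mbox{diag}(\sigma_1,\ldots,\sigma_m)$. Because all diagonal entries of $D$ are nonzero, left-multiplication by $D$ scales each row of $B$ by a nonzero constant, so $C$ has the same pattern as $B$, namely $P$. Then $CC\trans = D(BB\trans)D\trans = D^2$, so the singular values of $C$ are exactly $\sigma_1,\ldots,\sigma_m$. There is no substantive obstacle in this argument: the entire proof rests on the identity $AA\trans = \sigma^2 I$ characterizing equal singular values together with the elementary fact that positive diagonal row-scalings preserve patterns.
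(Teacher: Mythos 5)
Your proof is correct and takes essentially the same route as the paper: characterize equal singular values via $AA\trans = \sigma^2 I$, rescale to get a row-orthonormal matrix, and left-multiply by a positive diagonal to realize an arbitrary positive singular value list while preserving the pattern. In fact your scalings are the right ones — the paper's printed proof uses $\frac{1}{\sqrt{\ell}}A$ and $\mbox{diag}(\sqrt{\sigma_1},\ldots,\sqrt{\sigma_m})Q$ where it should use $\frac{1}{\ell}A$ and $\mbox{diag}(\sigma_1,\ldots,\sigma_m)Q$, so your version corrects two small typos.
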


\begin{proof}
First suppose there is $A\in \mathcal{Z}(P)$ 
with all singular values equal to the nonzero value $\ell$.
This implies that  $AA\trans $ is a symmetric matrix with all eigenvalues equal to $\ell^2$, which in turn implies $AA\trans = \ell^2 I$. Hence $\frac{1}{\sqrt{\ell}} A$ is a row-orthonormal matrix with pattern $P$. 
Conversely, if $Q \in \mathcal{Z}(P)$ is row-orthonormal,
then $QQ\trans =I$ and each singular value of $Q$ is $1$.

If $\sigma_1, \ldots, \sigma_m$ is a list of $m$ positive 
reals, then $\mbox{diag}(\sqrt{\sigma_1}, \ldots, \sqrt{\sigma_m})Q$ has singular values $\sigma_1, \ldots, \sigma_m$. This establishes the moreover statement. 
\end{proof}

By \Cref{ortho} the ISVP-$P$ for a pattern $P$ 
that allows a row-orthonormal matrix reduces to identifying the lists of nonnegative reals containing at least one 0 that are the singular values of some matrix with pattern $P$.  We do not address this problem here.

The next result characterizes those $m\times n$ patterns $P$ for which
every list of $m$ nonnegative reals, other than the list of $m$ zeros, is the list of singular values of some matrix with pattern $P$. A matrix or vector is \textit{nowhere zero} provided each of its entries is nonzero. 

\begin{theorem}
    Let $P$ be an $m \times n $ pattern with $m \leq n$. Then the following are equivalent:
    \begin{itemize}
        \item[\rm (a)] $P$ allows a rank 1 realization and a rank $m$ realization.  
        \item[\rm (b)] $P$ has the form
        \[ P = 
        \left[ 
        \begin{array}{r|r}
             J & O  \\
        \end{array}
        \right],
        \]
        where $J$ is the $m \times k$ all 1's matrix for some $k$ with  $k \geq m$ and $O$ $m \times (n-k)$.
        \item[\rm (c)] For each list $\Sigma$ of  $m$ nonnegative reals, other than the list of $m$ zeros, there exists a matrix $A$ with pattern $P$ 
        having singular values $\Sigma$. 
    \end{itemize}
\end{theorem}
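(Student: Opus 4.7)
The plan is to prove the cycle $(c) \Rightarrow (a) \Rightarrow (b) \Rightarrow (c)$. The implication $(c) \Rightarrow (a)$ is immediate by specializing $\Sigma$: the list $(1, 0, \ldots, 0)$ gives a rank-$1$ realization and the list $(1, 1, \ldots, 1)$ gives a rank-$m$ realization. For $(a) \Rightarrow (b)$, I would factor any rank-$1$ realization as $\mathbf{u}\mathbf{v}\trans$ and note that its pattern is the outer product of the supports $S$ of $\mathbf{u}$ and $T$ of $\mathbf{v}$: $P$ has $1$'s exactly on $S \times T$, so rows outside $S$ and columns outside $T$ are identically zero in $P$. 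A rank-$m$ realization admits no zero row, so $S = \{1, \ldots, m\}$, and after a column permutation $P = [J \mid O]$ with $J$ the $m \times |T|$ all-ones matrix. Finally, $\mathrm{rank}(P) \geq m$ forces $|T| \geq m$.

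For $(b) \Rightarrow (c)$, I would fix a list $\sigma_1 \geq \cdots \geq \sigma_m \geq 0$ that is not identically zero, and let $r \geq 1$ be the number of positive entries. It suffices to exhibit an $m \times k$ nowhere zero matrix $B$ with singular values $\sigma_1, \ldots, \sigma_m$, since then $A = [B \mid O]$ realizes $P = [J \mid O]$ with the required spectrum. I would seek $B$ in the form
\[
B = \sum_{\ell = 1}^{r} \sigma_\ell \mathbf{u}_\ell \mathbf{v}_\ell\trans,
\]
where $\mathbf{u}_1, \ldots, \mathbf{u}_r$ form an orthonormal system in $\mat^m$ and $\mathbf{v}_1, \ldots, \mathbf{v}_r$ form an orthonormal system in $\mat^k$ (both possible since $r \leq m \leq k$). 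Every such $B$ has singular values $\sigma_1, \ldots, \sigma_r, 0, \ldots, 0$ by construction, so the remaining task is to arrange that $B$ is nowhere zero.

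This I would accomplish by a genericity argument. The pairs $(\{\mathbf{u}_\ell\}, \{\mathbf{v}_\ell\})$ range over a product of two Stiefel manifolds $\mathcal{M}$, a connected real analytic manifold. For each position $(i,j)$, the equation $B_{ij} = \sum_\ell \sigma_\ell (\mathbf{u}_\ell)_i (\mathbf{v}_\ell)_j = 0$ cuts out a closed analytic subvariety $Z_{ij} \subset \mathcal{M}$. To see $Z_{ij}$ is proper, I would specialize to $\mathbf{u}_1 = \mathbf{e}_i$ and $\mathbf{v}_1 = \mathbf{e}_j$, extended arbitrarily to orthonormal systems; orthogonality then forces $(\mathbf{u}_\ell)_i = (\mathbf{v}_\ell)_j = 0$ for every $\ell > 1$, giving $B_{ij} = \sigma_1 > 0$ at this point. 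Hence each $Z_{ij}$ is nowhere dense in $\mathcal{M}$, and by the Baire category theorem the complement of the finite union $\bigcup_{i,j} Z_{ij}$ is nonempty; any point in it produces the desired $B$. The main obstacle is precisely this rank-deficient regime ($r < m$): when every $\sigma_i$ is positive, \Cref{ortho} settles the matter once one exhibits a row-orthonormal realization of $[J \mid O]$, but when zeros appear in the list one must control the zero-nonzero pattern of a rank-$r$ matrix whose spectrum is prescribed, and the orthogonality trick above is the observation that makes the subvariety argument go through.
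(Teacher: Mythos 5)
Your proof is correct, and for the implication (b) $\Rightarrow$ (c) it is actually more complete than the paper's. The directions (c) $\Rightarrow$ (a) (specialize to $(1,0,\dots,0)$ and $(1,\dots,1)$) and (a) $\Rightarrow$ (b) (rank-one factorization plus no-zero-rows from the rank-$m$ realization) coincide with the paper's argument modulo phrasing. The divergence is in (b) $\Rightarrow$ (c): the paper produces an explicit nowhere-zero row-orthonormal $m\times k$ matrix (the first $m$ rows of $I_k-\tfrac{2}{k}J$ for $k\ge3$, with $1\times1$ and $2\times 2$ special cases) and then asserts that (c) follows from \Cref{ortho}. But the moreover clause of \Cref{ortho} only constructs matrices with \emph{positive} singular value lists, by left-multiplying the row-orthonormal matrix by a diagonal matrix — and if any prescribed singular value is zero, that diagonal scaling kills a row, destroying the pattern. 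So the paper's argument as written handles only the full-rank lists, and the rank-deficient regime is left unaddressed. You noticed exactly this and supplied the missing piece: parametrize the rank-$r$ matrices with prescribed positive singular values by a product of Stiefel manifolds, and show each vanishing locus $Z_{ij}$ is proper and nowhere dense via the orthogonality observation that specializing $\mathbf u_1=\mathbf e_i$, $\mathbf v_1=\mathbf e_j$ forces $(\mathbf u_\ell)_i=(\mathbf v_\ell)_j=0$ for $\ell>1$ and hence $B_{ij}=\sigma_1>0$; Baire then gives a nowhere-zero $B$. The tradeoff is clear: the paper's route is elementary and explicit but covers only full rank, while yours uses heavier machinery (analytic subvarieties, Baire category) but handles all ranks uniformly. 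One small repair: your phrase "a connected real analytic manifold" fails when $r=m$, since $V_m(\mathbb{R}^m)=O(m)$ is disconnected; either restrict to a single component, or observe that $B$ is unchanged under simultaneous negation of $\mathbf u_\ell$ and $\mathbf v_\ell$ so that one may work in $SO(m)$. This is cosmetic and does not affect the conclusion.
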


\begin{proof}
First assume (a).  Since $P$ allows a rank 1 realization, 
the rows and columns of $P$ can be permuted to a matrix of the form 
\[
\left[ 
\begin{array}{c|c}
J & O \\ \hline
O & O 
\end{array} \right].
\]
Since $P$ allows a rank $m$ realization, $P$ contains $m$ 
ones with no two in the same row or column.  Hence (b) holds.

Next assume (b). 
We claim that for each $k\geq 1$, there is a $k\times k$, nowhere zero 
orthogonal matrix $Q$.  For $k=1$ and $k=2$ we have
\[
[1] \mbox{ and } {1/\sqrt{2}}\left[ \begin{array}{rr}
1 & 1 \\
1 & -1
\end{array} \right]. 
\]
For $k\geq 3$, $I_{k}- \frac{2}{k}J$ works. Now, for $m\leq k$, the first $m$ rows of $Q$ form a nowhere zero $m\times k$ row-orthonormal matrix, and 
statement (c) follows from \Cref{ortho}.

Finally, assume (c). 
Then there is a matrix with pattern $P$ having 
singular values $0,0,\ldots, 0, 1$ and a matrix with pattern $P$ having singular values $1,1, \ldots, 1$.
Hence (a) holds. 
\end{proof}

Inverse problems may not behave well with respect to superpatterns.
Here's an example of a pattern $P$,  and a superpattern $\widehat{P}$
such that there is a matrix $A$ with pattern $P$ but there is no matrix $B$ with 
pattern $\widehat{P}$ having the same singular values as $A$. 
\begin{Example}
\label{badsuper}
\rm
The pattern 
\[
P= \left[ \begin{array} {cc} 
1 & 0 \\
0 & 1 \end{array} 
\right] 
\]
allows a matrix $A$ with $\Sigma (A)= \{ 1,1\}$; take $A=P$.
By \Cref{ortho}, the pattern, $\widehat{P}$, obtained from $P$ by changing its $(1,2)$-entry to a $1$, 
does not allow a matrix with singular values $\{1,1\}$, as no matrix with pattern $\widehat{P}$ is 
orthogonal. \qed
\end{Example}

As a final result for this section, we resolve the ISVP-$P$
for the patterns whose  bigraphs are  paths. Throughout the paper, if $A$ is an $n\times n$ matrix, and $\alpha$
and $\beta$ are subsets of $\{1,\ldots,n\}$, then $A[\alpha,
\beta]$ denotes the submatrix of $A$ whose rows have index in $\alpha$ and whose columns have index in $\beta$.  If $\alpha=\beta$, then we abbreviate $A[\alpha,\beta]$ to $A[\alpha]$.

\begin{theorem}
\label{Path}
Let $P$ be an $n\times (n+1)$ pattern whose underlying bigraph is a path on $2n+1$ vertices.  Then a list $\Sigma$ of $n$ nonnegative real numbers forms the singular values of a matrix with pattern $P$ if and only if the elements are distinct and nonzero.
\end{theorem}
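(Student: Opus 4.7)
The plan is as follows. After permuting rows and columns (which does not affect singular values), we may assume the path in the bigraph has the form $1' - 1 - 2' - 2 - \cdots - n - (n+1)'$, so that any $A \in \Z(P)$ is upper bidiagonal with nonzero entries $a_i := A_{ii}$ and $b_i := A_{i,i+1}$ for $i = 1, \ldots, n$. For the forward direction, I would note that the leading $n \times n$ submatrix $A[\{1,\ldots,n\}]$ is upper triangular with nonzero diagonal, so $\mathrm{rank}(A) = n$ and all $n$ singular values of $A$ are positive. For distinctness, I would compute $AA\trans$: it is symmetric tridiagonal with diagonal entries $a_i^2 + b_i^2$ and super-diagonal entries $a_{i+1}b_i$, all of which are nonzero. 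Such ``unreduced'' symmetric tridiagonal matrices are classically known to have distinct eigenvalues (the three-term eigenvector recurrence determines each eigenvector from its first component, forcing every eigenspace to be one-dimensional), so the singular values of $A$ are distinct.

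For the converse, given distinct positive reals $\sigma_1 > \cdots > \sigma_n > 0$, my plan is to proceed in two steps. First, invoke the classical inverse eigenvalue theorem for Jacobi matrices to produce an $n \times n$ positive-definite unreduced symmetric tridiagonal matrix $T$ with eigenvalues $\sigma_1^2, \ldots, \sigma_n^2$. Second, factor $T = AA\trans$ with $A$ upper bidiagonal by the recursion: pick $a_1 > 0$ small, then for $i = 1, \ldots, n-1$ set $b_i = \sqrt{T_{ii} - a_i^2}$ and $a_{i+1} = T_{i,i+1}/b_i$, and finally $b_n = \sqrt{T_{nn} - a_n^2}$. Matching the tridiagonal entries of $AA\trans$ to those of $T$ then yields $\Sigma(A) = (\sigma_1, \ldots, \sigma_n)$.

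The main obstacle will be verifying that, for sufficiently small $a_1 > 0$, all of the $a_i, b_i$ produced by the recursion are nonzero. Formally setting $a_1 = 0$ in the recursion gives $b_i^2 = \det T[\{1,\ldots,i\}]/\det T[\{1,\ldots,i-1\}]$ and $a_{i+1}^2 = T_{i,i+1}^2 \det T[\{1,\ldots,i-1\}]/\det T[\{1,\ldots,i\}]$ (with $\det T[\emptyset] := 1$), both of which are strictly positive because the leading principal minors of a positive-definite matrix are positive. By continuity of the recursion in $a_1$, all of these values remain nonzero for $a_1$ sufficiently small and positive, yielding the required matrix $A \in \Z(P)$ with the prescribed list of singular values.
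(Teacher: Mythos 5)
Your proof is correct, and it follows a genuinely different (though closely related) factorization route from the paper's. For the constructive direction, the paper appends a zero to the singular value list, invokes the Jacobi inverse eigenvalue theorem to build a singular $(n+1)\times(n+1)$ unreduced tridiagonal matrix $M$ with eigenvalues $\sigma_1^2>\cdots>\sigma_n^2>0$, and then factors $M=B^\top B$ with $B$ the desired $n\times(n+1)$ bidiagonal matrix; the entries of $B$ are given in closed form as ratios of leading principal minors of $M$, so no free parameter or limiting argument is needed. You instead work on the smaller $n\times n$ Gram matrix $AA^\top$: you build a positive definite unreduced tridiagonal $T$ with eigenvalues $\sigma_i^2$ and factor $T=AA^\top$, which (because $A$ has one more column than row) leaves a one-parameter family of factorizations indexed by $a_1$, forcing a continuity argument to keep all entries nonzero. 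Your argument is sound — the $a_1=0$ specialization gives $b_i^2=\det T[\{1,\ldots,i\}]/\det T[\{1,\ldots,i-1\}]>0$ and $a_{i+1}^2=T_{i,i+1}^2\det T[\{1,\ldots,i-1\}]/\det T[\{1,\ldots,i\}]>0$ by positive definiteness and unreducedness, and the recursion is smooth on the region where the $b_i$ stay positive, so small $a_1>0$ works. The paper's formulation trades your perturbation step for the mild awkwardness of a rank-deficient $(n+1)\times(n+1)$ Gram matrix, gaining explicit formulas in return; your version stays in the positive definite $n\times n$ world at the cost of the extra parameter. Both forward directions rest on the same two facts (an unreduced symmetric tridiagonal Gram matrix has simple eigenvalues; a full-rank triangular submatrix rules out a zero singular value), and both converses invoke the classical Jacobi inverse eigenvalue theorem, so the overall architecture is parallel even though the two Gram matrices differ in size and rank.
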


\begin{proof}
Without loss of generality $P$ is the $n\times (n+1)$ pattern with  nonzeros in positions 
$(1,1)$, $(1,2)$, $(2,2)$, $(2,3)$, \ldots, $(n-1,n)$, $(n,n)$ and $(n,n+1)$, and zeros elsewhere.

Let $B$ be a matrix with pattern $P$.  Then $BB\trans $ is 
a symmetric matrix whose graph is the path $1$--$2$--$\cdots$--$n$.
In particular, this implies that each eigenvalue of $B{B\trans }$ is simple.
Hence, each singular value of $B$ is simple. Additionally, as $B[\{1, \ldots, n\}, \{2, \ldots, n+1\}]$ is upper triangular with nonzeros on its diagonal,  the rows of $B$ are linearly independent.
Therefore the inverse singular value problem for $P$  has a solution only if the prescribed singular values 
are distinct and nonzero.

Now let $\sigma_1> \sigma_2>\cdots >\sigma_{n+1}=0$ be $n+1$ real numbers.
Then by Theorem 4.2.1 of \cite{Gladwell}, there exists a nonnegative matrix 
$M=[m_{ij}]$ whose graph is the path on $n+1$ vertices and whose eigenvalues are $\sigma_j^2$ for $j=1,2\ldots, n+1$.
Define the $n\times (n+1)$ matrix $B=[b_{ij}]$ 
by 
\[
b_{ii}= \sqrt{\frac{\det M[\{1,\ldots, i\}]}{\det M[\{1, \ldots, i-1\} ]}}
\]
for $i=1,\ldots, n$ (here we take the determinant of a $0\times 0$ matrix to be $1$), 
$b_{i,i+1}=m_{i,i+1}/b_{i,i}$ for $i=1,2,\ldots n$, and $b_{i,j}=0$ for all other $i$ and $j$. Note the pattern of $B$ is $P$.

We claim that $B\trans B =M$. 
If $|j-i| \geq 2$, then the $(i,j)$-entries of $B\trans B$ and $M$
are both zero. 
Next note that for $i=1,\ldots, n$, the  $(i+1,i)$-entry of $B\trans B $ equals
\[ 
b_{i,i+1}b_{i,i}= m_{i,i+1}/b_{i,i}\cdot b_{i,i}=m_{i,i+1},
\] 
as desired. 
Also note for $i=1,\ldots, n$ the $(i,i)$-entry of $B\trans B $ equals
\begin{eqnarray*}
 b_{i,i}^2+ b_{i-1,i}^2&=&  \frac{\det M[\{1,\ldots, i\}]}{\det M[\{1,\ldots, i-1\}]}
+ \frac{ m_{i,i-1}^2\det M[\{1,\ldots, i-2\}]}{\det M[\{1, \ldots, i-1\} ]}
\\
&=& \frac{1}{\det M[\{1, \ldots, i-1\}] } ( 
m_{i,i} \det M[\{1,\ldots,i-1\} ]
\\
 & & -m_{i,i-1}^2 \det M[\{1, \ldots,i-2 \}]
+ m_{i,i-1}^2 \det M[\{1, \ldots, i-2 \}] )\\
&=& 
 m_{i,i}.
\end{eqnarray*}
 Here we take $b_{0,1}$ to be $0$, and the second equality follows from 
 Laplace expansion of the determinant of $M[\{1, \ldots, i\}]$ along its $i$-th row. 

Finally note that the $(n+1, n+1)$-entry of $B\trans B$ equals
\begin{eqnarray*}
b_{n,n+1}^2 & = &  m_{n+1,n}^2/b_{n,n}^2\\
& = & \frac{ m_{n+1,n}^2 \det M[\{1,\ldots, n-1\}]}{ \det M[\{1, \ldots, n\} ]} \\ 
&= & \frac{ m_{n+1,n+1} \det M[\{1,\ldots, n\}]}{ \det M[\{1,\ldots, n\}]} \\
& = &  m_{n+1,n+1}.
\end{eqnarray*}
The second to last equality follows from the fact that 
\[ 0=\det M=
m_{n+1,n+1} \det M[\{1,\ldots, n\}] - m_{n+1,n}^2 \det M[\{1,\ldots, n-1\}]. 
\] 
Hence $B\trans B\ =M$, and $B\trans$ has singular values $\sigma_i$ for $i=1,2,\ldots, n+1$.  It follows that $B$ has singular values $\sigma_i$
for $i=1,2,\ldots, n$.
\end{proof}

A very similar proof to \Cref{Path} resolves the ISVP-$P$ for patterns $P$ whose bigraph is a path with an even number of vertices. 

\section{The Strong Singular Value Property}
 \Cref{badsuper} shows that the ISVP-$P$ can be poorly behaved with respect to superpatterns.
We now describe a condition on a matrix $A$ that guarantees the existence of a matrix 
with the same singular values as $A$ for each superpattern of $A$. We use $A\circ B$
to denote the Schur (that is, entry-wise product) of two matrices $A$ and $B$ of the same size. 

The $m\times n$ matrix $A$ has the \textit{Strong Singular Value Property} (the {\it SSVP}, for short) provided the zero matrix is the only $m\times n$ matrix $X$ such that 
\begin{itemize}
\item[(a)] $XA\trans $ is symmetric,
\item[(b)]  $A\trans  X$ is symmetric, and 
\item[(c)]  $A \circ X=O$.
\end{itemize}

In this section we give examples of matrices having the  SSVP, establish fundamental properties of SSVP matrices, characterize SSVP matrices of 
certain types and show how the problem of recognizing whether a matrix has the SSVP 
can be reduced to the problem of recognizing whether the columns of an associated 
matrix are linearly independent. In the next section we describe how the SSVP can be used 
in the ISVP-$P$. We first explore the SSVP with some examples. 

\begin{Example}
\label{nowhere0}
\rm
Each nowhere zero matrix has the SSVP.
\\
This is clear, since for a nowhere zero matrix $A$, $A\circ X=O$ implies $X=O$.\qed 
\end{Example}
The next example shows that the definition of the SSVP requires
both statements (a) and (b) in the definition.
\begin{Example}
\rm 
Consider
\[
A= \left[ \begin{array}{cc} 
1 & 0 \\
0 & 2
\end{array}
\right]. 
\]
Each matrix $X$ such that $A \circ X=O$
has the form 
\[
X=\left[
\begin{array}{ccc}
0 &  a \\
b &  0 
\end{array} 
\right].
\]
Direct computation shows that $XA\trans $ is symmetric 
if and only if $b=2a$. Similarly, $A\trans  X$ is symmetric
if and only if $a=2b$.  It follows that $A$ has the SSVP.
Moreover, both equations $b=2a$ and $a=2b$ are required to conclude
$X=O$.
\qed
\end{Example}

\begin{Example}
\label{zerorow}
\rm
No $m \times n$ matrix with $m\leq n$ and a row of zeros  has the SSVP. 

\bigskip\noindent
Let $A$ be such a matrix with $m\leq n$ whose $i$-th row is zero, and let $\bf e_i$ be the column vector with a $1$ in position $i$ and zeros elsewhere. 
As $m-1<n$, there is a nonzero vector
$\mathbf z$ such that $A{\mathbf z}={\mathbf 0}$.  Let $X= {\mathbf e_i}{\mathbf z}\trans $.  Then $XA\trans = {\mathbf e_i}(A{\mathbf z})\trans =O $, 
$A\trans  X = A\trans  {\mathbf e_i}{\mathbf z}\trans = {\mathbf  0}{\mathbf z}\trans =O$,  $A\circ X=O$ and $
X \neq O$.  Thus, $A$ does not have the SSVP.
\qed 
\end{Example}
\begin{proposition}
Let $\mathbf y\trans$ be a $1 \times n$ matrix with $n\geq 1$.
Then $\mathbf y\trans$ has the SSVP if and only if $\mathbf y\neq {\bf 0}$.
\end{proposition}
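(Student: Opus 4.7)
The plan is to unravel the three SSVP conditions explicitly for the $1\times n$ matrix $A=\mathbf{y}\trans$ and a candidate $1\times n$ matrix $X=\mathbf{x}\trans$, and then handle the two directions separately.

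For the ``only if'' direction, suppose $\mathbf{y}=\mathbf{0}$. Then $\mathbf{y}\trans$ is a $1\times n$ matrix consisting of a single zero row, so with $m=1\leq n$ the conclusion is immediate from \Cref{zerorow}.

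For the ``if'' direction, suppose $\mathbf{y}\neq\mathbf{0}$, and let $X=\mathbf{x}\trans$ satisfy conditions (a), (b), (c). Condition (a) requires $XA\trans=\mathbf{x}\trans\mathbf{y}$ to be symmetric, but this is a $1\times 1$ matrix, so (a) carries no information. Condition (b) says $A\trans X=\mathbf{y}\mathbf{x}\trans$ is symmetric, i.e.,
\[
\mathbf{y}\mathbf{x}\trans = \mathbf{x}\mathbf{y}\trans.
\]
Choosing any index $i$ with $y_i\neq 0$ and reading off the $i$-th row gives $y_i\mathbf{x}\trans = x_i\mathbf{y}\trans$, so $\mathbf{x}=c\mathbf{y}$ with $c=x_i/y_i$. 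Finally, condition (c) states $y_j x_j=0$ for each $j$; substituting $x_j=cy_j$ gives $cy_j^2=0$ for every $j$. Since some $y_j\neq 0$, we conclude $c=0$ and hence $X=O$, establishing the SSVP.

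The only place where there is any substance at all is the observation that symmetry of the rank-one matrix $\mathbf{y}\mathbf{x}\trans$ collapses the problem from $n$ unknowns to a single scalar $c$; once that is in hand, the Hadamard condition (c) immediately forces $c=0$. I do not anticipate any real obstacle — the main care is simply in checking that condition (a) is trivial for $1\times n$ matrices so that the symmetry of $A\trans X$ alone must do the work.
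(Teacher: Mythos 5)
Your proof is correct and follows essentially the same approach as the paper: the forward direction invokes the zero-row example, and the converse observes that symmetry of $\mathbf y\mathbf x\trans$ forces $\mathbf x$ to be a scalar multiple of $\mathbf y$, after which the Hadamard condition kills $\mathbf x$. You merely spell out in detail the two steps (extracting the scalar $c$ from a nonzero entry of $\mathbf y$ and substituting into condition (c)) that the paper states in a single line.
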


\begin{proof}
By \Cref{zerorow}, if $\mathbf y\trans$ has the SSVP, then $\mathbf y$ is nonzero.

For the converse, assume that $\mathbf y \neq \mathbf 0$, 
and note that  if $\mathbf x$ is a vector 
such that $\mathbf y \mathbf x\trans$ is symmetric, then $\mathbf x$ is a multiple of $ \mathbf y$, 
hence $\mathbf y\circ \mathbf x=\mathbf 0$, implies $\mathbf x=\mathbf 0$. 
Thus $\mathbf y$ has the SSVP. 
\end{proof}

The following shows that having the SSVP is preserved under any combination of 
transposition, permuting rows, permuting columns, negating some set of rows, and 
negating some set of columns. 

\begin{theorem}
\label{equiv}
Let $A$ be an $m\times n$ matrix.
Then for each of $\rm (a)$--$ \rm (e)$ below, 
$A$ has the SSVP if and only if $B$ does. 
\begin{itemize}
\item[\rm (a)] $B=A\trans $.
\item[\rm (b)] $B=PA$ for some permutation matrix $P$.
\item[\rm (c)] $B=AQ$ for some permutation matrix $Q$.
\item[\rm (d)] $B$ is obtained from $A$ by negating some of its rows.
\item[\rm (e)] $B$ is obtained from $A$ by negating some of its columns.
\end{itemize}
\end{theorem}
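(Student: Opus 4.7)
The plan is to give each of (a)--(e) a simple linear bijection $X\mapsto Y$ on the appropriate space of matrices that transforms a hypothetical nonzero witness $X$ to the failure of the SSVP for $B$ into a nonzero witness $Y$ to the failure of the SSVP for $A$. For case (a) I would take $Y=X^\top$. For cases (b)--(e) I would write $B=UAV$ where $U,V$ are signed-permutation (monomial orthogonal) matrices---specifically $(U,V)=(P,I)$, $(I,Q)$, $(D,I)$, or $(I,D)$, where $D$ denotes the diagonal $\pm 1$ matrix encoding the sign changes---and take $Y=U^\top X V^\top$. Contrapositively this shows the SSVP for $A$ implies the SSVP for $B$, and applying the same argument to the inverse operation (which is of the same type, since permutations and sign changes and transposition are involutions up to inverses) yields the reverse implication.

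For case (a), the SSVP conditions on $X$ relative to $B=A^\top$ read $XA$ symmetric, $AX$ symmetric, and $A^\top\circ X=O$. Transposing the first two gives that $YA^\top=(AX)^\top$ and $A^\top Y=(XA)^\top$ are symmetric, while the pointwise identity $(A\circ X^\top)_{ij}=A_{ij}X_{ji}=(A^\top\circ X)_{ji}$ forces $A\circ Y=O$. Since $Y=O$ if and only if $X=O$, this finishes (a).

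For cases (b)--(e), the identities $V^\top V=I$ and $U^\top U=I$ together with $A=U^\top B V^\top$ give
\[
YA^\top = U^\top X V^\top\cdot VB^\top U = U^\top(XB^\top)U, \qquad A^\top Y = VB^\top U\cdot U^\top X V^\top = V(B^\top X)V^\top,
\]
so the symmetry of $XB^\top$ and $B^\top X$ passes to the symmetry of $YA^\top$ and $A^\top Y$ by conjugating the given symmetric expressions by orthogonal matrices. For the Schur-product condition, a signed-permutation matrix permutes the entries of another matrix up to sign, so a direct index chase shows that for every $(i,j)$ one has $(B\circ X)_{i,j}=\pm(A\circ Y)_{\pi(i),\rho(j)}$ for the permutations $\pi$ and $\rho$ encoded in $U$ and $V$; hence $B\circ X=O$ if and only if $A\circ Y=O$. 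Invertibility of $U$ and $V$ gives $Y=O$ if and only if $X=O$.

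The main obstacle is the bookkeeping in cases (b)--(e): one must be careful about whether $U$, $U^\top$, or $U^{-1}$ appears in each step, and must verify that the $\pm 1$ factors introduced by signed permutations in (d) and (e) never affect whether an entry vanishes in the Schur product. Once the ansatz $Y=U^\top X V^\top$ is chosen, every verification reduces to a short manipulation, and the four non-transpose cases are handled uniformly rather than separately.
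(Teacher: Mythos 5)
Your proposal is correct and uses essentially the same mechanism as the paper: a linear bijection $X\mapsto Y$ between candidate witnesses, with $Y=X^\top$ for (a) and a transformation by orthogonal (here signed-permutation) factors for the remaining cases, followed by the observation that each elementary operation is its own inverse up to type so the biconditional follows from one direction. The only real difference is organizational: you handle (b)--(e) uniformly by writing $B=UAV$ with $U,V$ signed-permutation matrices and taking $Y=U^\top X V^\top$, whereas the paper proves (b) and (d) directly (via $X=P^\top Y$ and $X=CY$, respectively) and then derives (c) and (e) by composing with (a). Your unified version is a mild streamlining; your key checks---symmetry of $YA^\top$ and $A^\top Y$ via orthogonal conjugation, and the Schur-product condition via the fact that the $\pm1$ signs square away when an entry of $A$ meets the matching entry of $Y$---are exactly the verifications the paper carries out case by case, so the content is the same.
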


\begin{proof}
We note in each case the if statement follows from the only if statement. For example, 
if we have established that if $A$ has the SSVP then $A\trans $ does; the statement
if $A\trans $ has the SSVP, then $A$ has the SSVP comes from applying the first statement 
to $A\trans $ and using $(A\trans )\trans =A$.  

Assume that $A$ has the SSVP. 

\bigskip\noindent
{\bf Proof of (a).} Let $B=A\trans $, and  suppose $B\trans  Y$ is symmetric, $YB\trans $ is symmetric, and $B\circ Y = O$ for some matrix $Y$. Consider $X = Y\trans $. Since $B\trans  Y$ is symmetric,  $AX\trans$ (and hence $X A\trans $) is symmetric.
Similarly, since $Y B\trans $ is symmetric, $X\trans A$
(and hence $A\trans X$) is symmetric. Finally, $B\circ Y = O$ implies  $A\trans  \circ X\trans  = O$, which is true only if $A   \circ X = O$.  Since $A$ has the SSVP, $X=O$. Hence $Y=O$, and 
we conclude that $B$ has the SSVP. 

\bigskip\noindent
{\bf Proof of (b).} Let $P$ be a permutation matrix and $B=PA$, and suppose $B\trans  Y$ is symmetric, $YB\trans $ is symmetric, and $B \circ Y = O$ for some matrix $Y$. Consider $X = P\trans  Y$. Since $B\trans  Y$ is symmetric 
and $A\trans X = B\trans Y$, $A\trans X$ is symmetric. Similarly, since $YB\trans $ is symmetric, $PXA\trans P\trans$ is symmetric.  This says that  $XA\trans$ is permutationally similarly to a symmetric matrix, and hence
$XA\trans$ is symmetric.

Finally, since $B\circ Y = O$, we know that $(PA)\circ Y = O$. Pre-multiplying by $P\trans $ gives $A\circ P\trans  Y = O$. Since $A\circ X = O$ only for $X = O$, $P\trans  Y  = O$. Thus $Y = O$, and $B$ has the SSVP.

\bigskip\noindent
{\bf Proof of
(c).} Let $B=AQ$, where $Q$ is a permutation matrix. By (a), $A\trans $ has the SSVP.
By (b), $Q\trans  A\trans $ has the SSVP. Applying (a) again, we see that 
$AQ=(Q\trans  A\trans )\trans $ has the SSVP.

\bigskip\noindent
{\bf Proof of
(d).} Let $B$ be a matrix obtained by negating a subset of rows of $A$. Then   
$B = CA$ for some diagonal matrix each of whose diagonal entries lie in $\{-1,1\}$. 
Note $C^{-1} = C\trans =C$.  Suppose that $B\trans  Y$ and $YB\trans $ are symmetric and  $B \circ Y = O$ 
for some matrix $Y$. Consider $X = CY$. 
Since $B\trans Y=A\trans C\trans C X=A\trans X$, 
$A\trans X$ is symmetric. 
Since $Y B\trans=CXA\trans C \trans$, 
$XA \trans$ is orthogonally similar to a symmetric matrix.  Thus, $XA\trans$ is symmetric.
Finally, since $B \circ Y = O$ we know that $(CA) \circ Y = O$. Pre-multiplying by $C$ gives $A \circ (CY) = O$. As $CY = X$, $CY=O$. Since $C$ only negates some rows of $Y$, $Y=O$. Hence $B$ has the SSVP.

\bigskip\noindent
{\bf Proof of
(e).}  Let $B$ be a matrix obtained from $A$ by negating a subset of its columns. 
Then $B=AE$ for some diagonal matrix whose diagonal entries lie in $\{-1,1\}$.
 Note that $E\trans =E$.
By (a), $A\trans $ has the SSVP.  By (b), $EA\trans $ has the SSVP.
Finally, by (a) again, $AE=(EA\trans )\trans $ has the SSVP. 
\end{proof}

The following shows that having columns of $\mathbf 0$ do not typically affect whether or not a matrix has the SSVP. 
\begin{theorem}
\label{border}
    Let $A$ be a matrix of the form,
    \[ 
A= \left[ \begin{array}{c|c} 
B & O
\end{array} 
\right], 
\]
where $B$ is a nonzero $m \times n$ matrix and $O$ is an $m \times \ell$ zero matrix with $\ell >  0$. Then  $A$ has the SSVP if and only if the rows of $B$ are linearly independent and  $B$ has the SSVP.
\end{theorem}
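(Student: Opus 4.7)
The plan is to prove both directions simultaneously by partitioning any candidate matrix $X$ in a way that makes the three SSVP conditions decouple. Write $X = [\,Y \mid Z\,]$, where $Y$ is $m \times n$ and $Z$ is $m \times \ell$, so that the partition of $X$ mirrors that of $A = [\,B \mid O\,]$.

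Direct block computation then yields $XA\trans = YB\trans$ and
\[
A\trans X = \begin{bmatrix} B\trans Y & B\trans Z \\ O & O \end{bmatrix},
\]
while $A \circ X = [\,B \circ Y \mid O\,]$. Reading off the symmetry of each block, I expect to conclude that $XA\trans$ is symmetric iff $YB\trans$ is symmetric; that $A\trans X$ is symmetric iff $B\trans Y$ is symmetric and the off-diagonal block $B\trans Z$ equals $O$; and that $A \circ X = O$ iff $B \circ Y = O$. In particular, the three conditions that constrain $Y$ are precisely the defining conditions of the SSVP for $B$, while the only condition constraining $Z$ is $B\trans Z = O$.

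Assembling this, $A$ has the SSVP iff both of the following hold: (i) every $Y$ with $YB\trans$ symmetric, $B\trans Y$ symmetric, and $B \circ Y = O$ must equal $O$; and (ii) every $Z$ with $B\trans Z = O$ must equal $O$. Condition (i) is the SSVP for $B$, and condition (ii) says the null space of $B\trans$ is trivial, which is equivalent to the columns of $B\trans$, that is, the rows of $B$, being linearly independent. This is the claimed equivalence.

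There is no substantive obstacle: the argument reduces to a single block computation, made clean by the fact that the zero columns of $A$ annihilate every contribution of $Z$ to $XA\trans$ and to $A \circ X$, leaving the lone constraint $B\trans Z = O$ coming from the off-diagonal block of $A\trans X$. The mild care needed is only in verifying the block symmetry of $A\trans X$ (using that for a block matrix to be symmetric the off-diagonal blocks must be transposes of one another, which here forces $B\trans Z = O$ since the lower-left block is $O$).
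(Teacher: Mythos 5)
Your proof is correct and uses the same essential idea as the paper: partition $X = [\,Y \mid Z\,]$ to mirror $A = [\,B \mid O\,]$, compute the blocks of $XA\trans$, $A\trans X$, and $A\circ X$, and observe that the constraints on $Y$ are exactly the SSVP conditions for $B$ while the only constraint on $Z$ is $B\trans Z = O$. The paper organizes the argument as a contrapositive for the forward direction (exhibiting explicit nonzero $\widehat{X}$) plus the block computation for the converse, whereas you derive the exact characterization in one pass, but the underlying computation is identical.
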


\begin{proof}
    First suppose that $B$ does not have the SSVP.  Then there exists a nonzero $m\times n$ matrix 
    $X$ such that $B\circ X=O$, and both $B\trans X$ and $XB\trans$ are symmetric.
    Evidently, the matrix
    \[ \widehat{X} = \left[ \begin{array}{c|c} X & O \end{array}\right]
    \] 
    certifies that $A$ does not have the SSVP. Next suppose that the rows of $B$ are linearly dependent. Then there exists an $m \times \ell $ nonzero matrix $Y$ such that $B\trans Y=O$.  The matrix 
    \[ \widehat{X} = \left[ \begin{array}{c|c} O & Y\end{array}\right]
    \] 
    certifies that $A$ does not have the SSVP.  Therefore, we have shown that if $A$ has the SSVP, then $B$ has the SSVP and the rows of 
    $B$ are linearly independent. 
    
    Conversely, assume that  $B$ has the SSVP and its rows are linearly independent. Let $X$ be an $m \times (n + \ell)$ matrix such that $A\trans  X$ and $XA\trans $ are symmetric and that $X \circ A = O$. By the latter assumption, we can write
        \[ 
X = \left[ \begin{array}{c|c} 
U & V
\end{array} 
\right],
\] where $U$ is an $m\times n$ matrix,  $V$ is $m \times \ell$, and $B \circ U= O$. Note that,
    \[ 
B\trans  X = \left[ \begin{array}{c} 
B\trans   \\
\hline
O
\end{array} 
\right]\left[ \begin{array}{c|c} 
U & V
\end{array} 
\right]
 = 
\left[ \begin{array}{c|c} 
B\trans  U & B\trans  V\\
\hline
O & O
\end{array} 
\right]
\] is symmetric, which implies $B\trans  V = O$, and $B\trans  U = U\trans  B$. Since the rows of  $B$ are linearly independent and $B\trans V = O$, $V=O$.

Additionally, 
 \[ 
 XA\trans  = \left[\begin{array}{c|c}
 U & V
 \end{array}\right] \left[\begin{array}{c}
      B\trans  \\
      \hline
     O
 \end{array} \right] = \left[\begin{array}{c}
 UB\trans 
 \end {array}
 \right],
\]
is symmetric, so $UB\trans  = BU\trans $. However, since $B$ has the SSVP, $U = O$. Hence $X = O$ and $A$ has the SSVP.
\end{proof}

We now characterize the $2\times n$ matrices (with $n\geq 2$) that have the SSVP.
By \Cref{border} it suffices to characterize such matrices having no column of zeros.
We note that using the operations in \Cref{equiv} every $2\times n$ matrix with $n\geq 2$ has the form 
of the matrix $A$ described in the statement of the theorem.

\begin{theorem}
Let $A$ be a $2\times n$ matrix with $n\geq 2$ of the form 
\[
A= \left[ \begin{array}{ccc} 
\mathbf a\trans  & \mathbf c\trans  & \mathbf 0\trans   \\
\mathbf b\trans  & \mathbf 0\trans & \mathbf d\trans   \end{array} \right],\]
where  $ \mathbf a\trans$, $\mathbf b \trans$, $\mathbf c\trans$ and
 $\mathbf d\trans$ are nowhere zero vectors of dimension $1\times n_1$, $1 \times n_1$, $1\times n_2$, $1 \times n_3$ respectively.  Then $A$ has  the  SSVP if and only if one of the following holds:
\begin{itemize}
\item[\rm (a)] $n_1>0$, and $n_2=n_3=0$;
\item[\rm (b)]$n_1>0$ and at least one of $n_2$ and $n_3$ is positive; 
\item[\rm (c)] $n_1=0$, $n_2>0$, $n_3>0$ and $\mathbf d\trans  \mathbf d \neq \mathbf c\trans \mathbf c$
\end{itemize}
\end{theorem}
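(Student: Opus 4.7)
The plan is to use the Hadamard constraint $A\circ X = O$ together with the nowhere-zero hypothesis on $\mathbf a, \mathbf b, \mathbf c, \mathbf d$ to pin $X$ down to at most two free row-vectors, and then read off what the two symmetry conditions say about those vectors. Partitioning $X$ as a $2\times 3$ block array with column blocks of sizes $n_1, n_2, n_3$ matching $A$, the condition $A\circ X = O$ annihilates the four blocks of $X$ aligned with nowhere-zero entries of $A$. What remains is a single row-vector $\mathbf x_3\trans$ of length $n_3$ in block position $(1,3)$ (aligned with the $\mathbf 0\trans$ of row 1) and a single row-vector $\mathbf y_2\trans$ of length $n_2$ in block position $(2,2)$ (aligned with the $\mathbf 0\trans$ of row 2). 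So SSVP reduces to showing that the two symmetry requirements force $\mathbf x_3 = \mathbf 0$ and $\mathbf y_2 = \mathbf 0$.

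Next I would compute $XA\trans$ and $A\trans X$ explicitly in block form. The $2\times 2$ matrix $XA\trans$ has zero diagonal and off-diagonal entries $\mathbf x_3\trans\mathbf d$ and $\mathbf y_2\trans\mathbf c$, so its symmetry is the single scalar equation $\mathbf x_3\trans\mathbf d = \mathbf y_2\trans\mathbf c$. The $n\times n$ matrix $A\trans X$ partitioned with the $(n_1, n_2, n_3)$ block structure has three nonzero blocks above the diagonal, namely $\mathbf b\mathbf y_2\trans$ in position $(1,2)$, $\mathbf a\mathbf x_3\trans$ in position $(1,3)$, and $\mathbf c\mathbf x_3\trans$ in position $(2,3)$, with the corresponding below-diagonal positions being $O$, $O$, and $\mathbf d\mathbf y_2\trans$ respectively. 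Symmetry of $A\trans X$ is therefore equivalent to $\mathbf b\mathbf y_2\trans = O$, $\mathbf a\mathbf x_3\trans = O$, together with the rank-one identity $\mathbf c\mathbf x_3\trans = \mathbf y_2\mathbf d\trans$.

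The case analysis is now brief. When $n_1 > 0$, the vectors $\mathbf a$ and $\mathbf b$ are nonzero, so the first two equations immediately give $\mathbf x_3 = \mathbf 0$ and $\mathbf y_2 = \mathbf 0$, and hence $X = O$; this covers both (a) and (b). When $n_1 = 0$, the equations involving $\mathbf a$ and $\mathbf b$ disappear; if additionally $n_2 = 0$ or $n_3 = 0$, then $A$ has a zero row and SSVP fails by \Cref{zerorow}. In the remaining subcase $n_2, n_3 > 0$, the identity $\mathbf c\mathbf x_3\trans = \mathbf y_2\mathbf d\trans$ with $\mathbf c, \mathbf d$ nowhere zero forces a common scalar $\lambda$ with $\mathbf y_2 = \lambda\mathbf c$ and $\mathbf x_3 = \lambda\mathbf d$; substituting into $\mathbf x_3\trans\mathbf d = \mathbf y_2\trans\mathbf c$ gives $\lambda(\mathbf d\trans\mathbf d - \mathbf c\trans\mathbf c) = 0$. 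Thus $X = O$ is forced exactly when $\mathbf c\trans\mathbf c \neq \mathbf d\trans\mathbf d$, and otherwise $\lambda = 1$ produces a nonzero $X$ satisfying every constraint, giving the equivalence in (c). The main delicate step is the extraction of the common scalar $\lambda$ from the rank-one identity: it is a short entrywise check, but requires ruling out the possibility that exactly one of $\mathbf x_3, \mathbf y_2$ vanishes, which the nowhere-zero status of $\mathbf c$ and $\mathbf d$ handles.
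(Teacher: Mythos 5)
Your proof is correct and follows essentially the same route as the paper's: both use the Hadamard constraint to reduce $X$ to two free row-vectors, compute $A\trans X$ and $XA\trans$ in the $(n_1,n_2,n_3)$ block structure, read off the symmetry equations $\mathbf b\mathbf y_2\trans=O$, $\mathbf a\mathbf x_3\trans=O$, $\mathbf c\mathbf x_3\trans=\mathbf y_2\mathbf d\trans$, and $\mathbf x_3\trans\mathbf d=\mathbf y_2\trans\mathbf c$, and then case-analyze on $n_1,n_2,n_3$, extracting a common scalar from the rank-one identity in the $n_1=0$ case exactly as the paper does. The only cosmetic difference is that you fold cases (a) and (b) into a single ``$n_1>0$'' argument, whereas the paper treats (a) by invoking the nowhere-zero example directly.
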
 

\begin{proof}
Let $X$ be a matrix such that $A\circ X=O$.
Then $X$ has the form
\[
\left[ 
\begin{array}{lcc}
\mathbf 0\trans & \mathbf 0\trans  & \mathbf w\trans \\
\mathbf 0 & \mathbf y\trans  & \mathbf 0\trans  
\end{array}
\right],
\]
where $\mathbf y$ is $n_2\times 1$ and $\mathbf w$ is $n_3 \times 1$.  Note that 
\begin{equation}
\label{one} A\trans  X= 
\left[ 
\begin{array}{ccc}
O &  \mathbf  b \mathbf y\trans  &  \mathbf a \mathbf w\trans   \\
O & O &  \mathbf c \mathbf w\trans    \\
O & \mathbf d \mathbf y\trans  & O 
\end{array} 
\right] 
\end{equation}
and 
\begin{equation}
\label{two}
XA\trans= \left[ 
\begin{array}{cc}
0 &  \mathbf w\trans \mathbf  d \\
\mathbf y\trans \mathbf  c &  0 
\end{array}
\right].
\end{equation}

If $n_1>0$ and $n_2=n_3=0$, then $A$ is nowhere zero, and hence, (by \Cref{nowhere0}), has the SSVP.

If $n_1>0$  and at least one of $n_2$ and $n_3$ is positive, then (\ref{one}) implies $X=O$ and $A$ has the SSVP.

If $n_1=0$ and at least one of $n_2$ and $n_3$ is zero, then $A$ has a row of zeros, and hence (by \Cref{zerorow}), $A$ does not have the SSVP.

In the remaining case, $n_1=0$  and both $n_2$ and $n_3$ are positive.  Equations (\ref{one}) and (\ref{two}) imply that $A$ has the SSVP if and only if 
\begin{eqnarray}
\label{three}
\mathbf w \mathbf c\trans & = & \mathbf d \mathbf y\trans  \mbox{ and }\\
\label{four}
\mathbf d\trans \mathbf  w&= & \mathbf y\trans \mathbf c
\end{eqnarray}
implies $X=O$.  Note that (\ref{three}) and (\ref{four}) imply that either both $\mathbf w$ and $ \mathbf y$ are zero, or 
$\mathbf w=s \mathbf d$ and  $\mathbf y= s \mathbf c$  for some nonzero scalar $s$ and $\mathbf d \trans \mathbf d=\mathbf c\trans \mathbf c$. Thus, if $\mathbf d \trans \mathbf d \neq \mathbf c\trans \mathbf c$, then  $X=O$ and $A$ has the SSVP.
Also, if $\mathbf d  \trans \mathbf d = \mathbf c\trans \mathbf c$, then taking $\mathbf y= \mathbf c$ and $\mathbf w= \mathbf d$
certifies that $A$ does not have the SSVP. 
\end{proof}

The following characterizes the diagonal matrices that have the SSVP. 
\begin{theorem}
\label{Diagonal}
The diagonal matrix $D=\mbox{\rm diag}(d_1, \ldots, d_n)$ has the SSVP if and only if the diagonal entries of $D$ have 
distinct absolute values, and each diagonal entry of $D$ is nonzero. 
\end{theorem}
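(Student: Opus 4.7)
The plan is to translate each of the three SSVP conditions into explicit entrywise constraints on $X=[x_{ij}]$, and then observe that the resulting linear system decouples into independent $2\times 2$ systems, one for each unordered off-diagonal pair $\{i,j\}$.

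First I would handle the necessity of nonzero diagonal entries: if some $d_i=0$, then $D$ has a zero row, and since $D$ is $n\times n$ (so $m\leq n$), Example \ref{zerorow} immediately implies $D$ does not have the SSVP. So for the remainder I would assume every $d_i$ is nonzero. The Hadamard condition $D\circ X=O$ then forces $x_{ii}=0$ for every $i$. Using $D\trans=D$, conditions (a) and (b) become $XD$ symmetric and $DX$ symmetric. Writing these out entrywise, for each pair $i\neq j$ I would obtain the two equations
\begin{equation*}
x_{ij}d_j = x_{ji}d_i \quad \text{and} \quad d_i x_{ij} = d_j x_{ji},
\end{equation*}
which form the linear system
\begin{equation*}
\begin{bmatrix} d_j & -d_i \\ d_i & -d_j \end{bmatrix} \begin{bmatrix} x_{ij} \\ x_{ji} \end{bmatrix} = \begin{bmatrix} 0 \\ 0 \end{bmatrix}.
\end{equation*}
The determinant of the coefficient matrix is $d_i^2-d_j^2$.

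From this it follows directly that if the $|d_i|$ are all distinct then every such $2\times 2$ system has only the zero solution, forcing $X=O$, so $D$ has the SSVP. Conversely, if $|d_i|=|d_j|$ for some $i\neq j$, the coefficient matrix is singular, so I can choose a nonzero $(x_{ij},x_{ji})$ satisfying both equations, set all other off-diagonal entries and all diagonal entries of $X$ to zero, and verify directly that this $X$ certifies the failure of the SSVP.

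I do not expect any real obstacle here: the argument is essentially a decoupling observation followed by computing the determinant of a $2\times 2$ matrix. The only mild subtlety is in the zero case, where invoking Example \ref{zerorow} is cleaner than constructing a witness $X$ by hand.
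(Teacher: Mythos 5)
Your proof is correct, but it follows a genuinely different route from the paper's. The paper's argument is more algebraic: from $DX = X\trans D$ and $XD = DX\trans$ it deduces $D^2X = XD^2$, so $X$ commutes with $D^2$; since $D^2$ is diagonal with distinct diagonal entries (here is where the distinct-absolute-values hypothesis enters), $X$ must be a polynomial in $D^2$ and hence diagonal, and then $D\circ X = O$ with nonzero $d_i$ forces $X = O$. Your argument instead writes out the symmetry conditions entrywise and observes that they decouple into independent $2\times 2$ homogeneous systems, one per unordered pair $\{i,j\}$, with coefficient determinant $d_i^2 - d_j^2$. The decoupling is correct (the $(i,j)$ and $(j,i)$ entries of $DX$ and $XD$ involve only $x_{ij}$ and $x_{ji}$), the determinant computation is right, and your converse witness (a nonzero null vector of the singular $2\times 2$ block, extended by zeros) matches, up to scaling, the paper's explicit $X = E_{ij} + (d_i/d_j)E_{ji}$. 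Your approach is more elementary and makes the role of $|d_i|\neq|d_j|$ entirely concrete; the paper's approach is shorter and leans on the standard fact that the commutant of a diagonal matrix with distinct diagonal entries consists exactly of the diagonal matrices. Both handle the $d_i=0$ case identically via Example~\ref{zerorow}.
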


\begin{proof}
We first prove the contrapositive of the forward direction.  
By \Cref{zerorow}, if some $d_i=0$, then $D$ does not have the SSVP. 
Now suppose that  $D$ has two diagonal entries of equal absolute 
value, say $|d_{i}|=|d_{j}|$ with $i \neq j$. For $i \in \{1,\ldots m\} $ and $j \in \{1,\ldots, n\}$.  Let $E_{ij}$ denote the $m\times n$ matrix with a one in position $(i,j)$ and zeros elsewhere.
Let $X= E_{ij} + \epsilon E_{ji}$, where $\epsilon=d_i/d_j$ and $E_{ij}$ is the 
$n\times n$ matrix with a $1$ in position $(i,j)$ and zeros elsewhere. Then $D\circ X=O$, $D\trans X$ is symmetric and $XD\trans$
is symmetric.  Thus, $D$ does not have the SSVP. 

To prove the converse,  assume that  each $d_i$ is nonzero,  and $|d_i| \neq |d_j|$ when $i \neq j$.
Let $X$ be a matrix such that $DX$ is symmetric, $XD$ is symmetric and $D \circ X=O$.
Then $DX=X\trans  D$ and $XD= DX\trans $.
So $D^2X= DX\trans  D= XD^2$.  This means that $D^2$ commutes with $X$. 
As $D^2$ is a diagonal matrix with distinct diagonal entries, $X$ is a polynomial 
in $D^2$. In particular $X$ is a diagonal matrix.   As $D\circ X=O$ and each diagonal 
entry of $D$ is nonzero, each diagonal entry of $X$ is zero.  Hence $X=O$, and $D$ has the 
SSVP. 
\end{proof}

\begin{corollary}
\label{diagbord}
Let $A$ be a matrix of the form 
\[ 
A= \left[ \begin{array}{c|c} 
D & O
\end{array} 
\right], 
\]
where $D$ is a diagonal matrix.
Then $A$ has the SSVP if and only if 
each diagonal entry of $D$ is nonzero, and no two diagonal entries 
of $D$  have the same absolute value.
\end{corollary}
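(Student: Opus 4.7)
The plan is to derive this corollary essentially for free by combining \Cref{border} with \Cref{Diagonal}, handling the edge case of a vacuous zero block separately. Let $D = \mbox{diag}(d_1,\ldots,d_m)$ be $m \times n$ (with $m \leq n$) and let the total matrix $A$ be $m \times (n + \ell)$ with $\ell \geq 0$ zero columns appended.

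First, if $\ell = 0$, then $A$ is itself a diagonal matrix (possibly rectangular with $m \leq n$, but the appended zero columns are absent), and the claim is exactly \Cref{Diagonal}. So assume $\ell > 0$. In this case I would like to apply \Cref{border} with $B := D$, but that theorem assumes $B$ is nonzero. If $D = O$ then $A = O$, which trivially fails the SSVP (e.g., any nonzero symmetric $X$ with $X \circ A = O$ works), and the right-hand condition also fails since the $d_i$ are not all nonzero; so both sides of the equivalence are false, and we may harmlessly assume $D \neq O$.

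With $D \neq O$ and $\ell > 0$, \Cref{border} tells me that $A$ has the SSVP if and only if (i) the rows of $D$ are linearly independent and (ii) $D$ itself has the SSVP. The key observation is that for the diagonal matrix $D$, the $i$-th row is $d_i \mathbf{e}_i\trans$, so the rows of $D$ are linearly independent precisely when every $d_i$ is nonzero. Meanwhile, by \Cref{Diagonal}, $D$ has the SSVP if and only if every $d_i$ is nonzero and the $|d_i|$ are pairwise distinct. Condition (ii) subsumes condition (i), so the combined criterion collapses to the statement in the corollary.

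There is no real obstacle here; the proof is essentially a two-line invocation. The only care needed is to check that \Cref{border}'s hypotheses are met (so the case $D = O$ must be dismissed first), and to observe the elementary fact that linear independence of the rows of a diagonal matrix is equivalent to the nonvanishing of its diagonal, which makes condition (i) of \Cref{border} redundant once condition (ii) is enforced via \Cref{Diagonal}.
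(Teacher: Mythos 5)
Your proof is correct and takes the same approach as the paper, whose entire proof reads simply ``This follows from \Cref{border} and \Cref{Diagonal}.'' You have carefully spelled out the details the paper leaves implicit, including the edge cases ($\ell = 0$, where only \Cref{Diagonal} is needed, and $D = O$, where \Cref{border} does not technically apply but both sides of the equivalence fail) and the observation that the row-independence condition from \Cref{border} is subsumed by the nonvanishing condition coming out of \Cref{Diagonal}.
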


\begin{proof}
This follows from \Cref{border} and \Cref{Diagonal}.
\end{proof}

The \textit{term-rank} of the $m\times n$ matrix $A$ is the largest number of 
nonzero entries of $A$ with no two in the same row and column.
A consequence of the K\"onig-Egervary theorem (see \cite{BrualdiRyser}) is that
if $m\leq n$ then $A$ has term-rank less than $m$ if and only if $A$
has an $r\times s$ zero-submatrix for some positive integers $r$ and 
$s$ with $r+s>n $.

\begin{theorem}
\label{termrank}
If the $m\times n$ matrix $A$ has the $SSVP$, 
then $A$ has term-rank $\min \{m,n\}$.
\end{theorem}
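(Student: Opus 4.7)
My plan is to prove the contrapositive: if the term-rank of $A$ is strictly less than $\min\{m,n\}$, then $A$ fails the SSVP. By \Cref{equiv}(a) the SSVP is preserved under transposition, so I may assume $m\leq n$, in which case the term-rank of $A$ is strictly less than $m$. The consequence of K\"onig--Egerv\'ary stated just before the theorem then produces an $r\times s$ zero submatrix of $A$ with $r+s>n$. Using \Cref{equiv}(b) and (c), I permute rows and columns (which preserves SSVP) to move this zero submatrix into the top-right corner, obtaining the block form
\[
A=\left[\begin{array}{c|c} B & O\\ \hline C & D\end{array}\right],
\]
where $B$ is $r\times(n-s)$, $C$ is $(m-r)\times(n-s)$, and $D$ is $(m-r)\times s$.

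The key observation is that the two inequalities $r+s>n$ and $m\leq n$ together force $r>n-s$ (so $B$ has strictly more rows than columns) and $s>m-r$ (so $D$ has strictly more columns than rows). Consequently both $\ker B\trans$ and $\ker D$ are nontrivial, and I can pick nonzero vectors $\mathbf u\in \ker B\trans$ and $\mathbf v\in \ker D$.

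The certificate that $A$ fails the SSVP is then
\[
X=\left[\begin{array}{c|c} O & \mathbf u\mathbf v\trans\\ \hline O & O\end{array}\right],
\]
a nonzero $m\times n$ matrix. All potentially nonzero entries of $X$ lie in the $r\times s$ block where $A$ is zero, so $A\circ X=O$. A direct block computation using $B\trans \mathbf u=\mathbf 0$ shows $A\trans X=O$, and using $D\mathbf v=\mathbf 0$ shows $XA\trans=O$; both zero matrices are trivially symmetric. Thus $X\neq O$ satisfies conditions (a), (b), and (c) in the definition of the SSVP, contradicting the assumed SSVP of $A$.

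The only subtle point I anticipate is the need to have both kernel conditions simultaneously nontrivial, which is precisely what the reduction to $m\leq n$ guarantees: without it one might only extract a nonzero $\mathbf u$ and lack the room to choose a suitable $\mathbf v$. Everything else is routine block algebra.
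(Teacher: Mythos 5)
Your proof is essentially the same as the paper's: both prove the contrapositive, reduce to $m\leq n$, use K\"onig--Egerv\'ary to extract a zero submatrix with $r+s>n$, permute to a block form, and build the certificate $X$ as an outer product of a left null vector of the top-left block and a null vector of the bottom-right block, placed in the zero block. The only cosmetic difference is that you phrase the two null vector existences via $r+s>n\geq m$ while the paper normalizes to $r+s=n+1$; the construction and verification of $X$ are identical.
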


\begin{proof}
We prove the contrapositive.
Without loss of generality we may assume $m\leq n$ and that the term-rank of  $A$ is less than $m$.
Then, $A$ has an $r\times s$ 
zero submatrix for some positive integers with $r+s =n+1$.

By permuting rows and columns of $A$,  we may take 
$A$ to have the form 
\[
\left[ 
\begin{array}{c|c}
A_1 & O \\ \hline 
A_2 & A_3
\end{array}
\right],
\]
where $A_1$ is $r \times (n-s)$ and $A_3$ is $(m-r) \times s$.
Since $r+s=n+1>n$, $A_1$ has more rows than columns, and $A_3$ has 
more columns than rows.  Hence there exists a nonzero vector $\mathbf y$ such that $\mathbf y\trans A_1=\mathbf 0\trans$,
and a nonzero vector $\mathbf x$ such that $A_3 \mathbf x=\mathbf 0$.
Let 
\[ X =
\left[ 
\begin{array}{c|c} O & \mathbf y \mathbf x\trans 
\\ \hline 
O & O 
\end{array}
\right].
\]
Then 
$A \circ X=O$, 
\[
A\trans X = \left[\begin{array}{c|c} O & A_1\trans \mathbf y \mathbf x\trans \\ \hline O &O \end{array}  \right]= O,
\] 
and 
\[
XA\trans = \left[\begin{array}{c|c} O & \mathbf y \mathbf x\trans A_3\trans \\ \hline O &O \end{array}  \right]= O.
\] 
Hence $A$ does not have the SSVP. 
\end{proof}

The next theorem characterizes the direct sums of matrices that have the  SSVP. We utilize the following classic result about Sylvester's matrix equation $AX=XB$ \cite{Gantmacher}. For completeness, we sketch a proof here.

\begin{lemma}
\label{sylvester}
Let $A$ and $B$ be symmetric $m\times m$ and $n\times n$ 
matrices, respectively, with spectral decompositions
$\sum_{i=1}^m \lambda_i \mathbf u_i \mathbf u_i\trans$, 
and 
$\sum_{j=1}^n \mu_j\mathbf v_i \mathbf v_i\trans$, respectively.  Then the $m\times n$ matrix $X$ 
satisfies $AX=XA$ if and only if there exist scalars
$c_{ij}$ such that $X=\sum c_{ij} \mathbf u_i \mathbf v_j\trans$, where the sum is over all $(i,j)$ where 
$\lambda_i=\mu_j$.
\end{lemma}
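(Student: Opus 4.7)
The plan is to reduce the equation $AX=XB$ to a coordinate form in the eigenbases of $A$ and $B$, where it decouples entry by entry. (I read the $AX=XA$ in the statement as a typo for $AX=XB$; the conclusion only makes sense for the latter.)

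First I would assemble the eigenvectors into orthogonal matrices: let $U=[\mathbf u_1\,|\,\cdots\,|\,\mathbf u_m]$ and $V=[\mathbf v_1\,|\,\cdots\,|\,\mathbf v_n]$, so that $A=U\Lambda U\trans$ and $B=VM V\trans$ with $\Lambda=\mbox{diag}(\lambda_1,\ldots,\lambda_m)$ and $M=\mbox{diag}(\mu_1,\ldots,\mu_n)$. Then multiplying $AX=XB$ on the left by $U\trans$ and on the right by $V$, and setting $Y=U\trans XV$, the equation becomes $\Lambda Y=YM$. Reading off entry $(i,j)$ gives $(\lambda_i-\mu_j)Y_{ij}=0$, so $Y_{ij}$ can be an arbitrary scalar $c_{ij}$ when $\lambda_i=\mu_j$ and must vanish otherwise.

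To finish, I would undo the change of basis: since $U$ and $V$ are orthogonal, $X=UYV\trans=\sum_{i,j}Y_{ij}\mathbf u_i\mathbf v_j\trans$, and the nonzero terms are exactly those indexed by the pairs with $\lambda_i=\mu_j$. Conversely, any $X$ of the form $\sum c_{ij}\mathbf u_i\mathbf v_j\trans$ with $\lambda_i=\mu_j$ on the summed pairs satisfies $AX=\sum c_{ij}\lambda_i\mathbf u_i\mathbf v_j\trans=\sum c_{ij}\mu_j\mathbf u_i\mathbf v_j\trans=XB$, using $A\mathbf u_i=\lambda_i\mathbf u_i$ and $\mathbf v_j\trans B=\mu_j\mathbf v_j\trans$ (here it matters that $B$ is symmetric).

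There is no real obstacle; the only thing to be careful about is spelling out that the spectral decompositions give genuine orthonormal bases (so $U$ and $V$ are orthogonal and the change of basis is invertible), and confirming that the parametrization by the $c_{ij}$ is consistent when $\lambda_i$ has multiplicity greater than one—this is automatic because the argument is done entirely in fixed eigenbases $\{\mathbf u_i\}$ and $\{\mathbf v_j\}$, not on eigenspaces.
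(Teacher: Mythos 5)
Your proof is correct and is essentially the paper's own argument: both expand $X$ in the rank-one eigenbasis $\{\mathbf u_i\mathbf v_j\trans\}$, reduce $AX=XB$ to $(\lambda_i-\mu_j)c_{ij}=0$, and equate coefficients; writing it via $Y=U\trans XV$ and $\Lambda Y=YM$ is merely a notational repackaging of the same step. You also correctly read $AX=XA$ as a typo for $AX=XB$, which is how the lemma is actually used later in the paper.
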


\begin{proof}
Note that $\mathbf u_i \mathbf v_j\trans$ $(i=1,\ldots, m; j=1,\ldots, n)$ is a basis for $\mathbb{R}^{m\times n}$.
Let $X= \sum_{i=1}^m\sum_{j=1}^n c_{ij} \mathbf u_i \mathbf v_j\trans$.  Then $AX=XB$ if and only if 
\begin{equation}
\label{gantmacher}
\sum_{i=1}^m\sum_{j=1}^n c_{ij}\lambda_i \mathbf u_i \mathbf v_j\trans= \sum_{i=1}^m\sum_{j=1}^n \mu_jc_{ij} \mathbf u_i \mathbf v_j\trans.
\end{equation}
The result follows from equating coefficients of the two sides in (\ref{gantmacher}).
\end{proof}

\begin{theorem}[\bf Direct Sum Theorem]
\label{dsum}
\phantom{ }\\
Let $A$ be an $m\times n$ matrix, $B$ be a $p \times q$ matrix and $M=A\oplus B$ with $m+p \leq n+q$.
Then $M$ has the SSVP if and only if  each of the following holds
\begin{itemize}
\item[\rm (a)] 
$ m \leq n$ and $p\leq q$;
\item[\rm (b)] 
both $A$ and $B$ have the SSVP; 
\item[\rm (c)] $A$ and $B$ have no common nonzero singular value; and
\item[\rm (d)] either both $A$ and $B$
have linearly independent rows, or one of $A$ and $B$ is square and invertible.
\end{itemize}
\end{theorem}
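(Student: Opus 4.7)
The plan is to work throughout with a conformal block partition
\[
X = \begin{bmatrix} X_{11} & X_{12} \\ X_{21} & X_{22} \end{bmatrix}
\]
of any candidate SSVP obstruction for $M = A \oplus B$, in which $X_{11}$ is $m \times n$ and $X_{22}$ is $p \times q$. The three SSVP conditions split cleanly along this partition: $M \circ X = O$ becomes $A \circ X_{11} = O$ and $B \circ X_{22} = O$; symmetry of $M\trans X$ becomes symmetry of $A\trans X_{11}$ and of $B\trans X_{22}$ together with the coupling $A\trans X_{12} = X_{21}\trans B$; and symmetry of $X M\trans$ yields the analogous individual conditions together with $X_{12} B\trans = A X_{21}\trans$. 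Writing $Y := X_{12}$ and $W := X_{21}$, the argument reduces to (i) using SSVP of $A$ and $B$ to kill the diagonal blocks, and (ii) using the coupling equations to kill the off-diagonal pair $(Y, W)$.

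For the forward direction, I would handle each of (a)--(d) by contrapositive. For (a), \Cref{termrank} forces the term-rank of $M$ to equal $m+p$; since term-rank is additive on direct sums and each summand is bounded by the minimum of its two dimensions, $m+p \leq \min(m,n)+\min(p,q)$, which collapses only when $m \leq n$ and $p \leq q$. For (b), a nonzero SSVP obstruction $X'$ for $A$ gives $X' \oplus O$ as an obstruction for $M$, and symmetrically for $B$. For (c), a common nonzero singular value $\sigma$ with unit singular vectors satisfying $A\mathbf{w} = \sigma \mathbf{u}$, $A\trans \mathbf{u} = \sigma \mathbf{w}$, $B\mathbf{w}' = \sigma \mathbf{u}'$, and $B\trans \mathbf{u}' = \sigma \mathbf{w}'$ yields the obstruction $Y = \mathbf{u}(\mathbf{w}')\trans$, $W = \mathbf{u}'\mathbf{w}\trans$ with diagonal blocks zero; a direct check reduces both couplings to $\sigma \mathbf{w}(\mathbf{w}')\trans$ and $\sigma \mathbf{u}(\mathbf{u}')\trans$ respectively. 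For (d), using (a), if $A$ has linearly dependent rows then $\ker A\trans$ is nontrivial, and $B$ failing to be square invertible forces $\mbox{rank}\,B < q$ so that $\ker B$ is nontrivial; picking nonzero $\mathbf{u}$ and $\mathbf{v}$ in these spaces yields the obstruction $Y = \mathbf{u}\mathbf{v}\trans$ with all other blocks zero. The case in which $B$ has dependent rows is symmetric, producing a nonzero $W$ instead.

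For the backward direction, assume (a)--(d) and let $X$ satisfy the SSVP conditions. Condition (b) gives $X_{11} = O = X_{22}$. Multiplying $A\trans Y = W\trans B$ on the left by $A$ and substituting $A W\trans = Y B\trans$ yields the Sylvester-type equation $AA\trans Y = Y B\trans B$; analogously $A\trans A W\trans = W\trans BB\trans$. By \Cref{sylvester}, $Y$ is a linear combination of rank-one terms $\mathbf{u}_i \mathbf{v}_j\trans$ indexed by common eigenvalues of $AA\trans$ and $B\trans B$. A shared nonzero eigenvalue would be the square of a common nonzero singular value, forbidden by (c); the zero eigenvalue lies in $AA\trans$ iff $A$ has dependent rows and in $B\trans B$ iff $p < q$ or $B$ has dependent rows. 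Condition (d) excludes the zero coincidence in each of its cases: both $A$ and $B$ having linearly independent rows makes $AA\trans$ nonsingular; $A$ square invertible does the same; and $B$ square invertible means $p = q$ with $B$ of full rank, so $B\trans B$ is nonsingular. A parallel analysis for $W$, using $A\trans A$ and $BB\trans$ in place of $AA\trans$ and $B\trans B$, completes the argument.

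The main obstacle is the case bookkeeping in the backward step for (d): two Sylvester systems must be controlled simultaneously, and the Gram matrices involved have asymmetric singularity conditions, since $AA\trans$ and $A\trans A$ fail to be invertible under loss of row rank and column rank respectively, and likewise for $B$. Condition (d) is crafted precisely so that in each of its three alternatives the zero eigenvalue is removed from at least one side of each Sylvester system; verifying this case-by-case, and confirming that every forward counterexample respects the block sparsity of $M$, is the principal technical subtlety of the proof.
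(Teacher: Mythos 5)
Your proposal is correct and follows essentially the same approach as the paper's proof: the block decomposition into $X_{11}, X_{12}, X_{21}, X_{22}$, the use of \Cref{termrank} for (a), the $X'\oplus O$ obstruction for (b), the rank-one singular-vector obstruction for (c), the $\mathbf{u}\mathbf{v}\trans$ obstruction for (d), and the Sylvester-equation argument combined with a case analysis over (d) for the converse are all the same as in the paper. The only cosmetic difference is that you organize (d) by contrapositive and treat both off-diagonal blocks uniformly via \Cref{sylvester}, whereas the paper argues (d) directly as two disjunctions and, in the case of dependent rows, recovers $S=O$ from the original coupling $BS\trans=TA\trans$ rather than from a second Sylvester equation.
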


\begin{proof}
First assume that $M$ has the SSVP. 
By \Cref{termrank}, $M$ as term-rank $m+p$; and hence $A$ has term-rank $m$ and $B$ has term-rank $p$. 
In particular $m \leq n$, and $p \leq q$.

To show $A$ has the SSVP, let $X$ be an $m\times n$ matrix such that $A \circ X=O$, $A\trans X$
is symmetric and $XA\trans$ is symmetric. Set 
\[ 
\widehat{X} = \left[ \begin{array}{cc} 
X & O \\
O & O \end{array} \right].\]
Then $M\trans \widehat{X}$ and $\widehat{X} M\trans$ are symmetric and $M\circ \widehat{X}=O$.
Since $M$ has the SSVP, $\widehat{X}=O$ and $X=O$.  Thus, $A$ has the SSVP.
A similar argument shows that $B$ has the SSVP.

 Suppose that $A$ and $B$ have a common  singular value $\sigma$. Then  there exist unit vectors  $\mathbf u$, $\mathbf v$, $\mathbf r$ and $\mathbf s$ 
and $\sigma > 0$
such that $A\trans \mathbf u=\sigma \mathbf v$,  $A \mathbf v= \mathbf \sigma \mathbf u$, $B\trans \mathbf r=\sigma \mathbf s$ and $B \mathbf s=\sigma \mathbf r$.
Let 
\[ 
X= \left[ \begin{array}{cc}
O & \mathbf u \mathbf s\trans \\
\mathbf r \mathbf v\trans & O
\end{array} \right].
\]
Then 
\[ 
M\trans X= \left[ \begin{array}{cc}
O & A\trans \mathbf u  \mathbf s\trans \\
B\trans \mathbf r \mathbf v\trans & O 
\end{array} 
\right] = 
\left[ \begin{array}{cc} 
O & \sigma \mathbf v \mathbf s\trans \\
\sigma  \mathbf s \mathbf v\trans & O 
\end{array} \right] 
\]
is symmetric. Similarly, $XM\trans$ is symmetric. Evidently, $M \circ X=O$.  Since $M$ has the SSVP, 
$X=O$.  This is a contradiction. 
Hence (b) holds.

We claim that either the rows of $A$ are linearly independent or $B$ is square and invertible. 
Let $\mathbf u$ and $\mathbf v$ be vectors such that $\mathbf u\trans A=\mathbf 0\trans$ and $\mathbf v\trans B\trans =\mathbf 0\trans$.
Let 
\[ 
X= \left[ 
\begin{array}{cc}
O & \mathbf u \mathbf v\trans\\
O & O 
\end{array} \right] 
\]
Then $M\trans X=O$, $XM\trans=O$, and $M\circ X=O$.  Since $M$ has the SSVP, $X=O$. 
This implies that $\mathbf u=\mathbf 0$ or $\mathbf v=\mathbf 0$. Hence either the rows of $A$ are linearly independent or the 
columns of $B$ are linearly independent.  Since $p\leq q$, the latter holds if and only if $B$ is square and invertible.
Similarly, either the rows of $B$ are linearly independent or  $A$ is square and invertible. 
Therefore, either both $A$ and $B$ have linearly independent rows, or one of $A$ and $B$ is square and invertible. 

Conversely, assume that (a)--(d) hold.  To show that $M$ has the SSVP, consider a matrix of the form 
\[ 
X= \left[ 
\begin{array}{cc}
R & S \\
T & U 
\end{array} 
\right],
\]
where $M\trans \circ X$ and $X \circ M\trans$ are symmetric, and $M\circ X=O$.
Then $A\trans R$ and $R A\trans$ are symmetric, and $A \circ R=O$. 
Since $A$ has the SSVP, $R=O$.  A similar argument shows that $U=O$.

Additionally, the conditions on $M$ and $X$ imply that 
\[ S\trans A= B\trans T \mbox{ and } B  S\trans = T A\trans.
\]
These imply that 
\[
S\trans (AA\trans) = (B\trans B) S\trans \mbox{ and }  T (A\trans A)= (B B\trans) T.  
\]
Since $A$ and $B$ have no common nonzero singular values, \Cref {sylvester} implies that 
the columns of $S\trans$ are null vectors of $B\trans B$, and the rows of $S\trans$ are left null vectors 
of $AA\trans$.  Similarly, the columns of $T$ are null vectors of $BB\trans$ and the rows of $T$
are left null vectors of $A\trans A$.

If the rows of $A$ are linearly independent and the rows of $B$ are linearly independent, 
then $AA\trans$ and $BB\trans$ are invertible. Hence $S=O$ and $T=O$.
If the rows of $A$ are linearly dependent, then by (d) $B$ is square and invertible, 
hence $T=O$. Furthermore $BS\trans=O$, and hence $S=O$. Similarly, if the rows of $B$ are linearly dependent $S=O$ and $T=O$,
We have now shown $X=O$, and hence $M$ has the SSVP. 
\end{proof} 

We conclude this section by explaining how the problem of determining whether a matrix $A$ has the SSVP reduces to a simple linear algebra problem. 
In particular, to each matrix $A$
we associate a matrix $\phi_A$ with the property that $A$ has the SSVP if and only if the matrix $\phi_A$ has linearly independent columns.

Let $A = \lbrack a_{ij}\rbrack$ be an $m\times n$ matrix, 
and let $X=[x_{ij}]$ be a $m\times n$ matrix whose entries are $mn$ distinct variables.
The  equations  $A\trans X = X\trans A$ and $XA\trans = AX\trans$ are equivalent to the following system of 
$\binom{m}{2} + \binom{n}{2}$ linear equations:
\begin{eqnarray}
\label{systemeqs1}
    (A\trans X - X\trans A)_{ij} &=& 0 \quad (1\leq i<j\leq n) \\
    \label{systemeqs2}
    (XA\trans - AX\trans)_{k \ell} &=& 0  \quad (1\leq k < \ell \leq m).
\end{eqnarray}
The condition $A\circ X=O$ is equivalent to 
the system of equations $x_{ij}=0$ for $(i,j)$ 
where $a_{ij} \neq 0$.
In particular, this reduction to a system of linear homogeneous linear equations implies that the following density 
result for matrices with the SSVP.

\begin{theorem}
\label{open}
Let $A$ be a matrix with the SSVP.
Then each matrix $B$ sufficiently close to $B$ (in the Euclidean metric) has the SSVP. 
\end{theorem}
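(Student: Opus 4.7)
The plan is to invoke the linear-algebraic encoding of the SSVP laid out in the paragraphs just above the theorem. Writing $Z_A = \{(i,j) : a_{ij} = 0\}$ for the set of zero positions of $A$, the free variables in the homogeneous system (\ref{systemeqs1})--(\ref{systemeqs2}) (after enforcing $A \circ X = O$) are precisely the entries $x_{ij}$ with $(i,j) \in Z_A$, and $\phi_A$ is the coefficient matrix of this system. Crucially, each entry of $\phi_A$ is a linear, hence continuous, function of the entries of $A$; and $A$ has the SSVP exactly when $\phi_A$ has trivial kernel, equivalently, rank equal to $|Z_A|$.

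Since $\phi_A$ has full column rank $|Z_A|$, I would fix a $|Z_A| \times |Z_A|$ submatrix $N$ of $\phi_A$ with $\det N \neq 0$. The polynomial $\det N$ depends continuously on the entries of $A$, so there is an open neighborhood $U$ of $A$ on which $\det N(B) \neq 0$, where $N(B)$ denotes the submatrix built by substituting $B$ for $A$ in the expressions defining $N$.

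The only subtlety is that the system for a perturbation $B$ a priori involves a different variable set, because $Z_B$ may differ from $Z_A$. However, for $B$ sufficiently close to $A$ in the Euclidean metric, every $a_{ij} \neq 0$ satisfies $b_{ij} \neq 0$, so $Z_B \subseteq Z_A$; this is guaranteed on a possibly smaller neighborhood $U'$ of $A$. Now suppose $B \in U \cap U'$ fails to have the SSVP, witnessed by some nonzero $X$ satisfying conditions (a), (b), (c) in the definition. Since $Z_B \subseteq Z_A$, $X$ has support in $Z_A$ and therefore determines a nonzero vector $\mathbf x$ in the $|Z_A|$-dimensional variable space. Conditions (a) and (b) for $B$ read $\phi_B'\, \mathbf x = \mathbf 0$, where $\phi_B'$ is the matrix obtained from $\phi_A$ by replacing each entry of $A$ with the corresponding entry of $B$. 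Restricting this equation to the rows defining $N$ gives $N(B)\, \mathbf x = \mathbf 0$, contradicting $\det N(B) \neq 0$. The only real obstacle is this bookkeeping around the possibly varying variable set; once it is in place, the rest is the textbook fact that full column rank is an open condition, certified by a nonvanishing minor.
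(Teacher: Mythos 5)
Your proposal is correct and takes essentially the same approach as the paper: both reduce the SSVP to the full column rank of the coefficient matrix of the linear system, certify full column rank by a nonvanishing maximal minor, and invoke continuity (polynomiality) of the determinant in the entries of $A$. You are somewhat more explicit than the paper about the bookkeeping issue that $Z_B \subseteq Z_A$ for $B$ near $A$, which is a genuine (if minor) point the paper leaves implicit; the substance of the argument is the same.
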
 

\begin{proof}
By the preceding discussion, $A$ has the SSVP if and only if the linear system of homogeneous equations in (\ref{systemeqs1}) and (\ref{systemeqs2}) has no nontrivial solution.  This is equivalent to the columns of the coefficient matrix  $C$ of the system being linearly independent, which in turn is equivalent to the existence of a square submatrix intersecting all columns of  $C$ with nonzero determinant.  The result now follows from the continuity of the determinant. 
\end{proof}

For $i \in \{1,\ldots m\} $ and $j \in \{1,\ldots, n\}$.  Let $E_{ij}$ denote the $m\times n$ matrix with a one in position $(i,j)$ and zeros elsewhere.  Then 
\[ X=\sum_{p=1}^m\sum_{q=1}^n x_{pq}E_{pq}.
\] 
  Observe that 
\begin{itemize}
\label{obs2.8}
\item[\rm (a)]
The coefficient of $x_{p,q}$ in the $(i,j)$-th equation in (\ref{systemeqs1}) 
is the $(i,j)$-entry of $A\trans E_{pq}-E_{qp}A$. 
\item[\rm (b)]  The coefficient of $x_{p,q}$ in the $(k,\ell)$-th equation in (\ref{systemeqs2}) is the 
$(k,\ell)$-entry of $E_{pq}A\trans-A E_{qp}$.
\end{itemize}
 
This information can be encoded into a single matrix as follows.
A {\it  hollow matrix} is a square matrix in which each diagonal entry is $0$.
Every $s \times s$ hollow symmetric matrix $B$
can be encoded into an $\binom{s}{2}$ by $1$ vector $\lexvec{B}$  by stacking the portions of the columns of $B$ below its main diagonal on top of each other. 
For example, if  
\[ 
B= \left[ \begin{array}{cccc}
0 & b_{21} & b_{31} & b_{41} \\
b_{21} & 0 & b_{32} & b_{42}\\
b_{31} & b_{32} & 0 & b_{43} \\
b_{41}& b_{42} & b_{43} & 0 
\end{array} \right],
\]
then 
\[ 
\lexvec{B}= 
\left[ \begin{array}{c}
b_{21}\\
b_{31}\\
b_{41} \\
b_{32} \\
b_{42} \\
b_{43}
\end{array} 
\right] .
\]
Using this notation, we see that the coefficient matrix to the system of $\binom{m}{2}+ \binom{n}{2}$ equations in 
(\ref{systemeqs1}) and (\ref{systemeqs2}) has column corresponding to $x_{pq}$ equal to
\begin{equation}
\renewcommand*{\arraystretch}{2}
\label{vec}
 \left[  \begin{array}{c} 
\lexvec (A\trans E_{pq} - E_{qp}A)
\\ \hline 
\lexvec( E_{pq}A\trans - AE_{qp}).
\end{array} \right] .
\end{equation}

Define $\Psi_A$ to be the $(\binom{m}{2} + \binom{n}{2}) \times (mn)$ matrix 
whose columns are indexed by the pairs $(p,q) \in \{1,\ldots, m\}\times \{1\ldots n\}$, and whose 
$(p,q)$-column is the vector in (\ref{vec}).  If we  let $\bf y$ be the $mn \times 1$ vector 
whose rows are indexed in a similar way by the pairs $(p,q)$ and 
\begin{equation}
\label{equation}    
Y=\sum_{p=1}^m \sum_{q=1}^n y_{pq} E_{pq},
\end{equation}
then 
$\Psi_A \bf y={\mathbf 0} $ if and only if 
if and only if both $A\trans Y$ and $YA\trans$ are symmetric. The \textit{SSVP verification matrix} for $A$ is denoted by $\phi_A$ and is the submatrix of $\Psi_A$ consisting of those columns labeled by $(p,q)$
where $a_{p,q} = 0$.

\begin{theorem}
Let $A$ be an $m\times n$ matrix and let $\phi_A$ be its SSVP verification matrix.
Then $A$ has the SSVP if and only if the columns of $\phi_A$ are linearly independent.
Moreover, when the columns of $\phi_A$ are linearly dependent each nonzero vector  $\mathbf y$ such that 
$\phi_A \mathbf y={ \mathbf 0}$ corresponds to a nonzero matrix $Y$ for which both $A\trans Y$ and $Y A\trans$ are symmetric, and $A \circ Y=O$.  
\end{theorem}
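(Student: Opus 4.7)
The plan is to leverage the correspondence, set up in the paragraphs preceding the statement, between column vectors $\mathbf y \in \mathbb R^{mn}$ and matrices $Y = \sum_{p,q} y_{pq} E_{pq}$, and to observe that $\phi_A$ is precisely the coefficient matrix of the linear system characterizing the SSVP.

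First I would recall that by construction of $\Psi_A$, the product $\Psi_A \mathbf y$ evaluates the left-hand sides of the equations $(A\trans Y - Y\trans A)_{ij} = 0$ and $(YA\trans - AY\trans)_{k\ell} = 0$ in (\ref{systemeqs1})--(\ref{systemeqs2}); hence $\Psi_A \mathbf y = \mathbf 0$ if and only if both $A\trans Y$ and $YA\trans$ are symmetric. Next, the condition $A \circ Y = O$ is equivalent to $y_{pq} = 0$ for every $(p,q)$ with $a_{pq} \neq 0$. So an arbitrary matrix $Y$ satisfying $A \circ Y = O$ is determined by a vector $\widetilde{\mathbf y}$ whose entries are the $y_{pq}$ with $a_{pq} = 0$, and in the expansion $\Psi_A \mathbf y$ only the columns indexed by such $(p,q)$ contribute. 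Since $\phi_A$ is exactly the submatrix of $\Psi_A$ consisting of those columns, this gives $\Psi_A \mathbf y = \phi_A \widetilde{\mathbf y}$.

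Assembling these equivalences, the matrix $Y$ certifies that $A$ fails the SSVP (i.e., $Y \neq O$, $A \circ Y = O$, and both $A\trans Y$ and $YA\trans$ symmetric) if and only if the corresponding nonzero vector $\widetilde{\mathbf y}$ satisfies $\phi_A \widetilde{\mathbf y} = \mathbf 0$. Therefore $A$ has the SSVP iff the nullspace of $\phi_A$ is trivial, iff the columns of $\phi_A$ are linearly independent. The moreover statement is immediate from the same correspondence: given any nonzero $\mathbf y$ with $\phi_A \mathbf y = \mathbf 0$, lifting it (by filling in zeros at positions where $a_{pq} \neq 0$) produces the required nonzero $Y$.

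Because the statement is almost a tautological repackaging of the construction, there is no real obstacle here; the only care required is bookkeeping, namely making sure the indexing of columns of $\phi_A$ by the zero positions of $A$ matches the indexing of the entries of $Y$ left free after imposing $A \circ Y = O$. I would therefore keep the proof short, citing the preceding discussion for the equivalence between $\Psi_A \mathbf y = \mathbf 0$ and the symmetry of $A\trans Y$ and $YA\trans$, and simply observe the restriction to the columns indexed by zero positions of $A$.
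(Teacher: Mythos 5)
Your proposal is correct and follows essentially the same route as the paper's own proof: both use the correspondence between vectors $\mathbf y$ supported on the zero positions of $A$ and matrices $Y$ with $A \circ Y = O$, and both observe that $\phi_A \mathbf y = \mathbf 0$ is exactly the condition that $A\trans Y$ and $YA\trans$ be symmetric. Your write-up simply spells out the intermediate step through $\Psi_A$ a little more explicitly than the paper does.
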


\begin{proof}
Note that there is a correspondence between 
vectors $\mathbf y$ whose rows are indexed by  $(p,q)$ for which  $a_{pq}=0$ and matrices $Y$ such that $A\circ Y=O$; namely, the $Y$ corresponding to $\mathbf y$ is 
$Y=\sum y_{pq} E_{pq}$ where the sum is over the same pairs $p$ and $q$. Additionally, with this correspondence $\phi_A \mathbf y=\mathbf 0 $ if and only if both $A\trans Y$ and $Y A\trans $ are symmetric. The result now follows. 

\end{proof}

\begin{Example}
\rm 
Here we  demonstrate how the verification matrix may be used to certify whether or not a matrix has the SSVP.
    Let 
    \begin{align*}
        A = \begin{bmatrix}
            1 & 1 & 0 & 0\\
            0 & 1 & 1 & 0\\
            0 & 0 & 1 & 1
        \end{bmatrix}
    \end{align*}
and
\begin{align*}
    X = \begin{bmatrix}
        x_{11} &x_{12}&x_{13}&x_{14}\\
        x_{21} & x_{22} & x_{23} & x_{24} \\ 
        x_{31} & x_{32} & x_{33} & x_{34}
    \end{bmatrix}.
\end{align*}
Direct calculation gives that $A\trans X-X\trans A$ equals
{\small
\[
 \left[\begin{array}{cccc}
0 & -x_{11} + x_{12} - x_{21} & x_{13} - x_{21} - x_{31} & x_{14} - x_{31} \\
x_{11} - x_{12} + x_{21} & 0 & x_{13} - x_{22} + x_{23} - x_{32} & x_{14} + x_{24} - x_{32} \\
-x_{13} + x_{21} + x_{31} & -x_{13} + x_{22} - x_{23} + x_{32} & 0 & x_{24} - x_{33} + x_{34} \\
-x_{14} + x_{31} & -x_{14} - x_{24} + x_{32} & -x_{24} + x_{33} - x_{34} & 0
\end{array}\right] 
\]}
and  $XA\trans-AX\trans$ equals 
{\small
\[
\left[\begin{array}{ccc}
0 & x_{12} + x_{13} - x_{21} - x_{22} & x_{13} + x_{14} - x_{31} - x_{32} \\
-x_{12} - x_{13} + x_{21} + x_{22} & 0 & x_{23} + x_{24} - x_{32} - x_{33} \\
-x_{13} - x_{14} + x_{31} + x_{32} & -x_{23} - x_{24} + x_{32} + x_{33} & 0
\end{array}\right].
\]
}
Hence,
{\small \[
\Psi_A= \left[\begin{array}{rrrrrrrrrrrr}
1 & -1 & 0 & 0 & 1 & 0 & 0 & 0 & 0 & 0 & 0 & 0 \\
0 & 0 & -1 & 0 & 1 & 0 & 0 & 0 & 1 & 0 & 0 & 0 \\
0 & 0 & 0 & -1 & 0 & 0 & 0 & 0 & 1 & 0 & 0 & 0 \\
0 & 0 & -1 & 0 & 0 & 1 & -1 & 0 & 0 & 1 & 0 & 0 \\
0 & 0 & 0 & -1 & 0 & 0 & 0 & -1 & 0 & 1 & 0 & 0 \\
0 & 0 & 0 & 0 & 0 & 0 & 0 & -1 & 0 & 0 & 1 & -1 \\
0 & -1 & -1 & 0 & 1 & 1 & 0 & 0 & 0 & 0 & 0 & 0 \\
0 & 0 & -1 & -1 & 0 & 0 & 0 & 0 & 1 & 1 & 0 & 0 \\
0 & 0 & 0 & 0 & 0 & 0 & -1 & -1 & 0 & 1 & 1 & 0
\end{array}\right],
\]}
where the rows of $\Psi_A$  are indexed by $(2,1)$, $(3,1)$, $(4,1)$, $(3,2)$, $(4,2)$ and $(4,3)$ entries of 
$A\trans X-X\trans A$, followed by the $(2,1)$, $(3,1)$ and $(3,2)$ entries of $XA\trans -AX\trans$,
and the columns of $\Psi_A$
are indexed by the entries of $X$ in the order $(1,1)$, $(1,2)$, \ldots, $(3,4)$.
The matrix $\phi_A$ is the submatrix of $\Psi_A$ consisting of those columns indexed by the zero entries of $A$ (that is, the columns indexed by $(1,3)$, $(1,4)$, $(2,1)$, $(2,4)$, $(3,1)$ and $(3,2)$).
Thus, 
{\small \[
\phi_A=
\left[\begin{array}{rrrrrr}
0 & 0 & 1 & 0 & 0 & 0 \\
-1 & 0 & 1 & 0 & 1 & 0 \\
0 & -1 & 0 & 0 & 1 & 0 \\
-1 & 0 & 0 & 0 & 0 & 1 \\
0 & -1 & 0 & -1 & 0 & 1 \\
0 & 0 & 0 & -1 & 0 & 0 \\
-1 & 0 & 1 & 0 & 0 & 0 \\
-1 & -1 & 0 & 0 & 1 & 1 \\
0 & 0 & 0 & -1 & 0 & 1
\end{array}\right] .
\]}
The $6\times 6$ submatrix $\phi_A[\{1,2,3,4,5,7\}, \{1,\ldots, 6\}]$ of $\phi_A$ has nonzero determinant. 
Hence, $\phi_A$ has linearly independent columns, and $A$ has the SSVP. \qed
\end{Example}

\begin{Example}
\rm 
Let
\begin{align*}
    B = \begin{bmatrix}
        1 & 1 & 0 & 0\\
        0 & 1 & 1 & 0\\
        0 & 0 & 0 & 0\\
    \end{bmatrix}.
\end{align*}
By \Cref{zerorow}, $B$ does not have the SSVP.  Here we show this using the verification matrix. For $X=[x_{ij}]$, we have $B\trans X -X\trans B$ equals 
{\small \[
\left[\begin{array}{cccc}
0 & -x_{11} + x_{12} - x_{21} & x_{13} - x_{21} & x_{14} \\
x_{11} - x_{12} + x_{21} & 0 & x_{13} - x_{22} + x_{23} & x_{14} + x_{24} \\
-x_{13} + x_{21} & -x_{13} + x_{22} - x_{23} & 0 & x_{24} \\
-x_{14} & -x_{14} - x_{24} & -x_{24} & 0
\end{array}\right]
\]}
and $BX\trans-XB\trans$ equals 
{\small \[ 
\left[\begin{array}{ccc}
0 & x_{12} + x_{13} - x_{21} - x_{22} & -x_{31} - x_{32} \\
-x_{12} - x_{13} + x_{21} + x_{22} & 0 & -x_{32} - x_{33} \\
x_{31} + x_{32} & x_{32} + x_{33} & 0
\end{array}\right].
\] }
This implies that 
{\small
\[ 
\Psi_B= \left[\begin{array}{rrrrrrrrrrrr}
1 & -1 & 0 & 0 & 1 & 0 & 0 & 0 & 0 & 0 & 0 & 0 \\
0 & 0 & -1 & 0 & 1 & 0 & 0 & 0 & 0 & 0 & 0 & 0 \\
0 & 0 & 0 & -1 & 0 & 0 & 0 & 0 & 0 & 0 & 0 & 0 \\
0 & 0 & -1 & 0 & 0 & 1 & -1 & 0 & 0 & 0 & 0 & 0 \\
0 & 0 & 0 & -1 & 0 & 0 & 0 & -1 & 0 & 0 & 0 & 0 \\
0 & 0 & 0 & 0 & 0 & 0 & 0 & -1 & 0 & 0 & 0 & 0 \\
0 & -1 & -1 & 0 & 1 & 1 & 0 & 0 & 0 & 0 & 0 & 0 \\
0 & 0 & 0 & 0 & 0 & 0 & 0 & 0 & 1 & 1 & 0 & 0 \\
0 & 0 & 0 & 0 & 0 & 0 & 0 & 0 & 0 & 1 & 1 & 0
\end{array}\right] 
\]}
and 
{\small \[ 
\phi_B=\left[\begin{array}{rrrrrrrr}
0 & 0 & 1 & 0 & 0 & 0 & 0 & 0 \\
-1 & 0 & 1 & 0 & 0 & 0 & 0 & 0 \\
0 & -1 & 0 & 0 & 0 & 0 & 0 & 0 \\
-1 & 0 & 0 & -1 & 0 & 0 & 0 & 0 \\
0 & -1 & 0 & 0 & 0 & 0 & 0 & 0 \\
0 & 0 & 0 & 0 & 0 & 0 & 0 & 0 \\
-1 & 0 & 1 & 0 & 0 & 0 & 0 & 0 \\
0 & 0 & 0 & 0 & 1 & 1 & 0 & 0 \\
0 & 0 & 0 & 0 & 0 & 1 & 1 & 0
\end{array}\right].
\]
}

It can be verified that 
\[
    \mathbf v = \left[ \begin{array}{r}
        0\\
        0\\
        0\\
        0\\
        1\\
        -1\\ 
        1\\
        0
    \end{array} \right] 
\]
is  a nonzero vector in the nullspace of $\phi_B$. 
Thus, $B$ does not have the SSVP.

The vector $\mathbf v$ corresponds to the matrix
\[
    Y = \left[ \begin{array}{rrrr}
        0 & 0 & 0 & 0\\
        0 & 0 & 0 & 0\\
        1 & -1 & 1 & 0
\end{array} 
\right]. 
\] 
Direct calculation shows $B\trans Y$ and $YB\trans$ are 
both zero matrices (and hence symmetric), and $B \circ Y = O$. Therefore, $Y$ is a matrix that certifies that $B$ does not have the SSVP. 
\qed
\end{Example}

\section{Implications of the SSVP}

In this section we state four fundamental results implied by the SSVP, 
and show how they can be applied to study inverse singular value problems.  The proofs of these results are quite technical in nature, and are left to \Cref{theory}. Fortunately, once the results are established they give simple-to-use tools. 
We note that these results are analogs of those for other strong properties
such as the strong Arnold property \cite{CdV},
the strong spectral property \cite{BARRETT2015}, 
and the strong inner product property
\cite{CS2020}. 

As we noted in \cref{badsuper},  in some instances the inverse singular value problem for a pattern and a superpattern can be quite different.  The first result shows when a matrix $A$ has the SSVP and pattern $P$, then 
every superpattern of $P$ allows a matrix with the same singular values 
as $A$. 

\begin{theorem}[\bf Superpattern Theorem] 
\label{superpattern}
\phantom{ } \\
    Let $A$ have the SSVP and $P$ be a superpattern of the pattern of $A$. Then there exists a matrix
    with pattern $P$ and singular values $\Sigma(A)$. 
\end{theorem}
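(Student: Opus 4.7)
The plan is to view matrices with a prescribed list of singular values as a smooth manifold and to apply a transversality / implicit function theorem argument. Write $P_A$ for the pattern of $A$, and let $M(A) = \{UAV\trans : U, V \text{ orthogonal}\}$ denote the \emph{isosingular manifold} through $A$; every element of $M(A)$ has the same singular values as $A$. Differentiating curves $t \mapsto e^{tK}Ae^{-tL}$ at $t=0$ shows that the tangent space to $M(A)$ at $A$ is
\[
T_A = \{KA + AL : K \in \skw(m),\ L \in \skw(n)\}.
\]
For a pattern $Q$, let $\mathcal{V}_Q = \{Y \in \mat^{m\times n} : y_{ij}=0\ \text{whenever}\ q_{ij}=0\}$, so that $\mathcal{V}_Q^\perp = \{X : Q \circ X = O\}$. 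Using that $\tr(MK)=0$ for every skew $K$ if and only if $M$ is symmetric, one checks that $X \in T_A^\perp$ if and only if both $XA\trans$ and $A\trans X$ are symmetric. Therefore the SSVP for $A$ is precisely the condition $T_A^\perp \cap \mathcal{V}_{P_A}^\perp = \{O\}$, equivalently the transversality $T_A + \mathcal{V}_{P_A} = \mat^{m\times n}$; the inclusion $\mathcal{V}_{P_A} \subseteq \mathcal{V}_P$ then gives the stronger $T_A + \mathcal{V}_P = \mat^{m\times n}$.

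I will prove the theorem by induction on $|P \setminus P_A|$, strengthening the claim so that the $B$ produced also has the SSVP. The base case $P = P_A$ is handled by $B = A$. For the inductive step, pick $(i,j) \in P \setminus P_A$, set $P' = P \setminus \{(i,j)\}$, and obtain by induction a matrix $B'$ with pattern $P'$, $\Sigma(B') = \Sigma(A)$, and the SSVP. It then suffices to produce $B$ arbitrarily close to $B'$, supported on $P$, with $\Sigma(B)=\Sigma(B')$ and $B_{ij}\neq 0$: the nonzero entries of $B'$ persist by continuity so that $B$ has pattern exactly $P$, and \Cref{open} applied to $B'$ guarantees that $B$ retains the SSVP, so the induction continues.

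To construct such a $B$, consider the smooth map $\Psi : \mathcal{O}(m) \times \mathcal{O}(n) \to \mathcal{V}_P^\perp$ given by $\Psi(U,V) = \pi(UB'V\trans)$, where $\pi$ is orthogonal projection onto $\mathcal{V}_P^\perp$. Then $\Psi(I,I)=O$, the derivative at $(I,I)$ is $(K,L)\mapsto \pi(KB'+B'L)$, and the transversality $T_{B'} + \mathcal{V}_P = \mat^{m\times n}$ from the first paragraph (applied to $B'$) makes this derivative surjective. The implicit function theorem then produces a smooth submanifold of $\mathcal{O}(m)\times \mathcal{O}(n)$ through $(I,I)$ whose image under $(U,V)\mapsto UB'V\trans$ is a smooth submanifold of $M(B')\cap \mathcal{V}_P$ near $B'$ with tangent space $T_{B'}\cap \mathcal{V}_P$ at $B'$. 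The main technical point is to locate a tangent direction with nonzero $(i,j)$-entry. Applying the transversality for both $P$ and $P'$,
\[
\dim(T_{B'}\cap \mathcal{V}_P)-\dim(T_{B'}\cap\mathcal{V}_{P'}) = \dim\mathcal{V}_P-\dim\mathcal{V}_{P'}=1,
\]
so $T_{B'}\cap\mathcal{V}_P$ strictly contains $T_{B'}\cap\mathcal{V}_{P'}$; any $X$ in the difference lies in $\mathcal{V}_P\setminus\mathcal{V}_{P'}$ and hence has $X_{ij}\neq 0$. Flowing a small distance along a curve in the image submanifold whose initial tangent is $X$ yields the desired $B$, completing the induction.
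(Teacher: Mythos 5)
Your proof is correct, but it takes a recognizably different route from the paper's. Both proofs rest on the same translation of the SSVP into a transversality statement, namely
\[
\{KA + AL : K\in\skw(m),\ L\in\skw(n)\} + \mbox{Span}(\mathcal{Z}(P_A)) = \mat^{m\times n},
\]
which is exactly \Cref{perp}. From there, however, the paper argues in a single step: it considers the one map $f(K,L,B)=e^{K}Ae^{L}+B$ on $\skw(m)\times\skw(n)\times\mbox{Span}(\mathcal{Z}(P))$, observes that its derivative at the origin is $(K,L,B)\mapsto KA+AL+B$ (surjective by the SSVP), invokes \Cref{cor3} to conclude that $f$ maps a neighborhood of the origin \emph{onto} a neighborhood of $A$, picks a nearby $\hat{A}$ with pattern $P$, and writes $\hat{A}-B=e^{K}Ae^{L}$ to land a matrix with the correct pattern and the same singular values. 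You instead do an entry-by-entry induction on $|P\setminus P_A|$: at each step you set up the map $\Psi(U,V)=\pi(UB'V\trans)$ into $\mathcal{V}_P^\perp$, identify $T_{B'}\cap\mathcal{V}_P$ as the tangent space of the relevant fiber, and use the clean dimension count
\[
\dim(T_{B'}\cap\mathcal{V}_P)-\dim(T_{B'}\cap\mathcal{V}_{P'})
=\dim\mathcal{V}_P-\dim\mathcal{V}_{P'}=1
\]
to produce a tangent direction that turns the new $(i,j)$-entry on while fixing the singular values. The trade-off is roughly this: the paper's one-shot argument is shorter and proves the theorem directly; your incremental argument is more geometrically explicit (it isolates exactly the one extra tangent direction responsible for the new entry), and because you invoke \Cref{open} at each step, it delivers the stronger conclusion that the final matrix retains the SSVP, which the paper's statement does not claim (though it can be extracted afterwards from the paper's construction plus \Cref{open}, since the matrix produced there is also a small perturbation of $A$). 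Your argument is also closer in flavor to the implicit-function-theorem treatments of the analogous superpattern theorems for other strong properties in the literature.
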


One application of the the Superpattern Theorem is the following which generalizes the  results 
in \cite{LDuarte,MonfaredShader} for  eigenvalue setting. 

\begin{corollary}[\bf Distinct Singular Values Theorem]
\phantom{ }\\ 
\label{distinct}
Let $\Sigma$ be a set of $m$ distinct positive real numbers 
and $P$ be an $m\times n$ pattern. 
Then there exists an $m\times n$ matrix $A$ with pattern $P$
and singular values $\Sigma$ if and only if $P$ has term-rank $m$.
\end{corollary}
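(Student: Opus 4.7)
My plan is to use the Superpattern Theorem together with the characterization of diagonal-like matrices that have the SSVP (\Cref{diagbord}) to reduce the existence question to finding a single ``easy'' SSVP realization.

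For the forward direction, suppose some $m\times n$ matrix $A$ with pattern $P$ has singular values $\Sigma$, a set of $m$ distinct positive reals. Then every singular value of $A$ is nonzero, so $A$ has rank $m$. Since the rank of a matrix is at most its term-rank, $P$ has term-rank at least $m$, and hence exactly $m$.

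For the converse, assume $P$ has term-rank $m$. Then there exist column indices $j_1 < j_2 < \cdots < j_m$ and a permutation $\pi$ of $\{1,\ldots,m\}$ such that $P_{i,j_{\pi(i)}} = 1$ for all $i$. Enumerate $\Sigma = \{\sigma_1, \ldots, \sigma_m\}$ as distinct positive reals, and define an $m\times n$ matrix $A$ by placing $\sigma_i$ in position $(i, j_{\pi(i)})$ and $0$ elsewhere. Then the pattern of $A$ is a subpattern of $P$, i.e., $P$ is a superpattern of the pattern of $A$. Moreover, by permuting the columns of $A$ (which preserves the SSVP by \Cref{equiv}(c)) we can bring $A$ into the form $[\,D \mid O\,]$, where $D = \mbox{diag}(\sigma_{\pi^{-1}(1)}, \ldots, \sigma_{\pi^{-1}(m)})$ has distinct nonzero diagonal entries with pairwise distinct absolute values (since the $\sigma_i$ are distinct positive reals). \Cref{diagbord} then gives that this column-permuted matrix has the SSVP, hence so does $A$ itself. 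Finally, the singular values of $A$ are simply the absolute values of the diagonal entries of $D$, namely $\Sigma$.

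Applying the Superpattern Theorem (\Cref{superpattern}) to $A$ and the superpattern $P$ yields a matrix with pattern $P$ and singular values $\Sigma(A) = \Sigma$, completing the proof. The main conceptual step is recognizing that the term-rank $m$ hypothesis is exactly what one needs to place a diagonal-like SSVP matrix inside $P$; no analytic obstacle arises because all the heavy lifting is done by the Superpattern Theorem, whose proof is deferred to the later sections.
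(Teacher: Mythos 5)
Your proof is correct and follows essentially the same route as the paper: both directions match (rank $=$ term-rank $=m$ for necessity; construct a $[\,D\mid O\,]$-type SSVP matrix via \Cref{diagbord} and invoke the Superpattern Theorem for sufficiency). The only difference is that you spell out the column permutation and \Cref{equiv}(c) explicitly where the paper says ``without loss of generality,'' which is a harmless and perhaps clearer elaboration.
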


\begin{proof}
Suppose such a matrix $A$ exists. Then 0 is not a singular value of $A$. 
Hence  the rank of $A$ is $m$. 
Thus $P$ has term-rank $m$. 

Next suppose that $P$ has term-rank $m$.  Without loss of generality 
we may assume that the $(1,1),\ldots, (m,m)$ entries  of $P$ are ones.
By \Cref{diagbord}, the matrix 
\[ 
\left[ 
\begin{array}{r|r}
D& O \end{array} 
\right],
\] 
where $D$ is a diagonal matrix whose diagonal entries are the elements of $\Sigma$ has the SSVP.   \Cref{superpattern} now implies the existence of the desired matrix $A$. 
\end{proof}
The Superpattern Theorem allows one to perturb the entries of a matrix with the SSVP to a matrix with a desired superpattern with the same set of singular values. In a similar way, we can perturb the singular values of a matrix while preserving its pattern.  
\begin{theorem}[\bf The Bifurcation Theorem]
    \label{bifurcation}
\phantom{ } \\ 
Let $A$ be an $m\times n$ matrix with  pattern $P$ 
and singular values $\Sigma(A)$. 
If $A$ has the SSVP, then for each $M \in \mat^{m\times n}$ sufficiently close to $A$, there exists a matrix with the SSVP, 
 pattern $P$ and singular values $\Sigma(M)$.
\end{theorem}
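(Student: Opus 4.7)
\bigskip

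\noindent\textbf{Proof plan.}
The plan is to reformulate the SSVP as a transversality condition between two linear subspaces of $\mathbb{R}^{m\times n}$, and then apply the submersion form of the implicit function theorem to a suitable parameterized map built from the $O(m)\times O(n)$ action on $A$.

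Let $\mathcal{V}_P = \{B\in\mathbb{R}^{m\times n} : B_{ij}=0 \text{ whenever } A_{ij}=0\}$, the linear span of the positions of $P$. Let
\[
\mathcal{T}_A \;=\; \{\, SA - AT : S\in\mathrm{Skew}(m),\ T\in\mathrm{Skew}(n)\,\},
\]
which is precisely the tangent space at $A$ to the orbit $\{UAV\trans : U\in O(m),\ V\in O(n)\}$ of matrices with singular values $\Sigma(A)$. Using $\langle X,Y\rangle=\operatorname{tr}(X\trans Y)$, one checks directly that
\[
\mathcal{V}_P^{\perp}=\{X : A\circ X = O\}, \qquad
\mathcal{T}_A^{\perp}=\{X : XA\trans \text{ and } A\trans X \text{ are both symmetric}\},
\]
where the second identity follows from $\operatorname{tr}(S\,XA\trans)=0$ for all skew $S$ iff $XA\trans$ is symmetric, and similarly for $T$. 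Hence $(\mathcal{V}_P+\mathcal{T}_A)^{\perp}$ is exactly the set of matrices $X$ the SSVP requires to be $O$, so the SSVP for $A$ is equivalent to the transversality statement $\mathcal{V}_P + \mathcal{T}_A = \mathbb{R}^{m\times n}$.

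Next, I will set up a smooth map that exposes the orbit structure. Define
\[
\tilde{\Phi}\colon \mathcal{V}_P \times \mathrm{Skew}(m) \times \mathrm{Skew}(n) \longrightarrow \mathbb{R}^{m\times n}, \qquad \tilde{\Phi}(B,S,T) = e^{S} B\, e^{-T}.
\]
Then $\tilde{\Phi}(A,0,0)=A$, and the total derivative at $(A,0,0)$ is
\[
D\tilde{\Phi}\bigl|_{(A,0,0)}(\dot B,\dot S,\dot T) \;=\; \dot B + \dot S A - A\dot T,
\]
whose image is exactly $\mathcal{V}_P + \mathcal{T}_A$. By the transversality reformulation of the SSVP, this image equals $\mathbb{R}^{m\times n}$, so $\tilde\Phi$ is a submersion at $(A,0,0)$. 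The submersion theorem then produces a neighborhood $\mathcal{U}$ of $A$ in $\mathbb{R}^{m\times n}$ and, for each $M\in\mathcal{U}$, a triple $(B_M,S_M,T_M)$ (varying continuously in $M$, with $(B_A,S_A,T_A)=(A,0,0)$) such that $e^{S_M} B_M e^{-T_M} = M$, equivalently $B_M = e^{-S_M} M\, e^{T_M}$.

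Finally, I extract the desired matrix and verify its properties. Since $e^{-S_M}$ and $e^{T_M}$ are orthogonal, $B_M$ has the same singular values as $M$. By construction $B_M\in\mathcal{V}_P$, so $B_M$ vanishes at every zero position of $A$; continuity of $M\mapsto B_M$ at $M=A$ ensures that for $M$ sufficiently close to $A$, the matrix $B_M$ is close to $A$ and therefore nonzero at every nonzero position of $A$, so $B_M$ has pattern $P$. Shrinking $\mathcal{U}$ once more and invoking \Cref{open} (openness of the SSVP), $B_M$ also has the SSVP.

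The principal obstacle is the first step: identifying the SSVP as the dual of the surjectivity condition $\mathcal{V}_P+\mathcal{T}_A=\mathbb{R}^{m\times n}$. Once this dictionary between the algebraic definition of the SSVP and the geometric transversality of $\mathcal{V}_P$ to the singular value orbit of $A$ is in place, the remainder is a routine application of the submersion theorem together with the openness result already established in \Cref{open}.
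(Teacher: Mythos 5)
Your proposal is correct and is essentially the paper's own argument: you identify the SSVP with the surjectivity of the derivative $(\dot B,\dot S,\dot T)\mapsto \dot B + \dot S A - A\dot T$ (the paper's \Cref{perp}), apply the inverse/submersion form of the IVT to the map $e^{S}Be^{-T}$ (the paper uses $e^{K}(A+B)e^{L}$, a change of variables that makes no real difference), pull back $M$ to a matrix in $\mathcal{V}_P$ with the same singular values, and invoke the openness of the SSVP (\Cref{open}) to finish. The only cosmetic difference is that you extract a continuous local section $M\mapsto B_M$ from the submersion theorem, whereas the paper shrinks the domain neighborhood in advance so that every $A+B$ drawn from it already has pattern $P$ and the SSVP; both are standard and equivalent ways to close the argument.
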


If $A$ is an $m \times n$ matrix with distinct singular values $\sigma_1 \geq \dots \geq \sigma_k$ and corresponding multiplicites $m_1,\dots,m_k$, the \textit{multiplicity list} of $A$ is $ (m_1,\dots,m_k)$. The Bifurcation Theorem allows one to study the set of multiplicity lists allowed by a pattern. The following example gives a nontrivial example of a pattern for which the Bifurcation Theorem allows us to show the existence of matrices with multiplicity lists and the same pattern tha are not accessible by elementary means.

\begin{Example}
\phantom{} \rm \\
Let $Q=I_5-\frac{2}{5}J$, and 
\[ 
A= \left[ \begin{array}{c|c}
Q & \mathbf 0 \\ \hline 
\mathbf e_5^T & 1
\end{array} \right]. 
\]
Evidently, $Q$ is an orthogonal matrix, 
and $A$ has singular values  
$(3+\sqrt{5})/2$, $1$, $1$, $1$, $1$, and $(3-\sqrt{5})/2$.
Hence, the multiplicity list of the singular values of $A$ is $(1,4,1)$. 

One can readily verify that $A$ has the SSVP.
Arbitrarily near the singular value decomposition of $A$,
and hence of $A$, there are matrices whose singular values 
have multiplicity lists in 
$ \{ (1,1,3,1),(1,3,1,1),(1,1,1,2,1),(1,1,2,1,1), \\(1,2,1,1,1), (1,1,1,1,1,1)\}$.
Hence by the Bifurcation Theorem for each of these 
multiplicity lists, there exists a matrix whose pattern is that of $A$ having singular values with the given multiplicity list.  

We note that one cannot easily get these multiplicity lists by just scaling the rows of $A$. To see this, 
let $D=\mbox{diag}(d_1,\ldots, d_6)$ be  an invertible diagonal matrix.  Then the graph of $(DA)(DA)\trans$
is a star on $6$ vertices.  It is known that each symmetric matrix 
whose graph is a star on $6$ vertices has at most one multiple eigenvalue.  Hence $DA$ has at most one multiple singular value. 
\qed
\end{Example}

In some instances the Superpattern Theorem is of limited use.  For example, suppose one wishes to study the inverse singular value problem for an $n\times n$ pattern whose bigraph $P$ is a $(2n)$-cycle.  Then each proper spanning subgraph $H$ of $G$ is the vertex disjoint union of paths, and hence (by the Direct Sum Theorem) no matrix with bipartite graph $H$ and a multiple singular value has the SSVP.  Hence, the Superpattern Theorem does not imply anything about ISVP-$P$ and multiple singular values.

The following theorem gives conditions for certain superpatterns of  a matrix $A$ without the SSVP to allow a matrix with the same singular values as $A$.
Its analog for eigenvalues and graphs 
is proven in \cite{FHLS}.  

We first need a few definitions. 
Let $A$ be an $m\times n$ matrix, and $S$ be an $m\times n$ superpattern of the pattern of $A$.
We say that $A$ has the {\it SSVP with respect to $S$} provided $Y=O$ is the only $m\times n$ matrix such that 
$A\trans Y$ is symmetric, $YA\trans$
is symmetric, and $S \circ Y=O$. Given an $m\times n$ matrix $A$ 
with singular value list $\Sigma$,
we use 
$\mbox{Tan}^{\Sigma}_A$ to denote the 
collection of matrices of the form 
$AK+LA$ where $K$, respectively $L$,
is an $m\times m$, respectively $n\times n$ skew-symmetric matrix.
In \Cref{theory}, we show that 
if $A$ is $m\times n$ and  has pattern $P$, then $A$ has the SSVP if and only if 
\[ \mbox{Tan}^{\Sigma}_A+ \mbox{Span}(\mathcal{Z}(P))
= \mathbb{R}^{m\times n}.
\] 
Given an $m\times n$ pattern $D=[d_{ij}] $, the {\it pattern of $A=[a_{ij}] $ in the direction of $D$} is the $m\times n$ 
pattern $S=[s_{ij}]$ for which 
$s_{ij}= a_{ij}$ when $a_{ij} \neq 0$ and $s_{ij}= d_{ij}$
otherwise.

\begin{theorem}[\bf Matrix Liberation Theorem]
\phantom{ } \\
Let $A$ be an $m\times n$ matrix with pattern $P$,  $D$ be a matrix in $\mbox{\rm Tan}^{\Sigma}_A$ 
and  $S$ be the pattern of $A$ in the direction of $D$.
If $A$ has the SSVP with respect to $S$, then there is a matrix $B$ with pattern 
$S$ that has the same singular values as $A$ and has the SSVP.
Moreover, there is such a $B$ with the SSVP.
\end{theorem}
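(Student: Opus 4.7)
The plan is to recognize the hypothesis as a transversality condition between the orthogonal equivalence orbit $\mathcal{O}_A=\{UAV:U\in O(m),V\in O(n)\}$ and the subspace $\mbox{Span}(\mathcal{Z}(S))$ at the point $A$, and then apply the implicit function theorem. First, I would observe that the ``SSVP with respect to $S$'' is dual, via the Frobenius inner product, to the condition
\[
\mbox{Tan}^{\Sigma}_A+\mbox{Span}(\mathcal{Z}(S))=\mathbb{R}^{m\times n}.
\]
Indeed, $\langle Y,LA+AK\rangle=\langle YA\trans,L\rangle+\langle A\trans Y,K\rangle$, so $Y\perp\mbox{Tan}^{\Sigma}_A$ iff both $A\trans Y$ and $YA\trans$ are symmetric; and $Y\perp\mbox{Span}(\mathcal{Z}(S))$ iff $S\circ Y=O$. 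Hence the hypothesis asserts precisely that $\mathcal{O}_A$ and $\mbox{Span}(\mathcal{Z}(S))$ are transverse at $A$.

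Next, $\mathcal{O}_A$ is a smooth embedded submanifold of $\mathbb{R}^{m\times n}$ with tangent space $\mbox{Tan}^{\Sigma}_A$ at $A$ (parametrize, e.g., by $(L,K)\mapsto e^LAe^K$ on a neighborhood of $(0,0)$ in $\mbox{Skew}(m)\times\mbox{Skew}(n)$). By transversality and the implicit function theorem, $\mathcal{O}_A\cap\mbox{Span}(\mathcal{Z}(S))$ is a smooth submanifold near $A$ whose tangent space at $A$ equals $\mbox{Tan}^{\Sigma}_A\cap\mbox{Span}(\mathcal{Z}(S))$. The key observation is that the vector $D$ lies in that tangent space: $D\in\mbox{Tan}^{\Sigma}_A$ by hypothesis, and by the very definition of the pattern of $A$ in the direction of $D$, the support of $D$ is contained in the support of $S$, so $D\in\mbox{Span}(\mathcal{Z}(S))$. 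Consequently there is a smooth curve $\gamma:(-\epsilon,\epsilon)\to\mathcal{O}_A\cap\mbox{Span}(\mathcal{Z}(S))$ with $\gamma(0)=A$ and $\gamma'(0)=D$.

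It then remains to check that $B=\gamma(t_0)$ has the desired properties for all sufficiently small $t_0>0$. Writing $\gamma(t)=A+tD+O(t^2)$ and examining entries, at positions $(i,j)$ with $a_{ij}\ne0$ one has $\gamma(t)_{ij}=a_{ij}+O(t)\ne0$; at positions with $a_{ij}=0$ but $s_{ij}=1$ the definition of $S$ forces $d_{ij}\ne0$, and the entry becomes $td_{ij}+O(t^2)\ne0$; at positions $(i,j)$ with $s_{ij}=0$ the entry is identically zero, since $\gamma(t)\in\mbox{Span}(\mathcal{Z}(S))$. Thus $\gamma(t_0)$ has pattern exactly $S$, and since $\gamma(t_0)\in\mathcal{O}_A$ it shares the singular values of $A$. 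For the moreover statement, the condition ``SSVP with respect to $S$'' is, by the same linear-algebraic encoding used to build the verification matrix $\phi_A$ (now using $S$ in place of the pattern of $A$), the condition that the columns of a certain matrix depending continuously on the underlying entries are linearly independent; the argument of \Cref{open} shows this is an open condition, so $\gamma(t_0)$ still satisfies SSVP with respect to $S$. Because the pattern of $\gamma(t_0)$ equals $S$, SSVP with respect to $S$ coincides with the SSVP of $\gamma(t_0)$, completing the proof.

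The main obstacle is the transversality/implicit function theorem step, namely verifying carefully that $\mathcal{O}_A$ is a smooth embedded manifold with the claimed tangent space and that transverse intersection with a linear subspace yields a submanifold whose tangent space at $A$ is the intersection of tangents. Once the curve $\gamma$ is in hand, the pattern bookkeeping and the openness of the SSVP conditions are routine.
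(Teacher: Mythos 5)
Your proof is correct. The duality computation showing that SSVP with respect to $S$ is equivalent to $\mbox{Tan}^{\Sigma}_A+\mbox{Span}(\mathcal{Z}(S))=\mathbb{R}^{m\times n}$ matches the argument in the paper's Lemma 6.1 (applied with $S$ in place of the pattern of $A$), and the observation that $D$ lies in both $\mbox{Tan}^{\Sigma}_A$ and $\mbox{Span}(\mathcal{Z}(S))$ is exactly the right starting point. The pattern bookkeeping along the curve $\gamma$ and the openness argument for the ``moreover'' clause are also sound: SSVP with respect to a fixed $S$ is the condition that a fixed submatrix of $\Psi_M$ have independent columns, which is open in $M$; and once $\gamma(t_0)$ has pattern exactly $S$, SSVP-with-respect-to-$S$ becomes the ordinary SSVP.

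That said, your route is genuinely different in flavor from the paper's. The paper proves a general-purpose quantitative lemma (their Theorem 6.7, built on a refined inverse function estimate from Marsden--Ratiu--Abraham) that works directly with the parametrization $f(K,L)=e^{K}Ae^{L}$: it augments $f$ by the coordinates in $\mbox{Span}(\mathcal{Z}(S))$ to make the Jacobian surjective, and then, with explicit $\epsilon$--$\delta$ control, shows that the point $f(\mathbf{0})+tD$ can be reached by $f$ up to a correction $\widehat{\mathbf y}$ small enough not to disturb any support positions. Your proof instead invokes the standard transversality theorem of differential topology: $\mathcal{O}_A$ is an embedded orbit of a compact group, $\mbox{Tan}^\Sigma_A$ is its tangent space, transverse intersection with the linear subspace $\mbox{Span}(\mathcal{Z}(S))$ is a submanifold with tangent $\mbox{Tan}^{\Sigma}_A\cap\mbox{Span}(\mathcal{Z}(S))$, and $D$ in that tangent is realized by a curve. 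This is cleaner and more conceptual; the paper's version is more self-contained (no appeal to orbits being embedded submanifolds) and yields the quantitative control directly, which is what their Theorem 6.7 needs so it can serve other ``strong property'' theorems too. The one thing to be explicit about if you write this up carefully: you rely on $\mathcal{O}_A$ being an embedded submanifold with the stated tangent space, which is a standard consequence of the compact-group action (or, equivalently, can be extracted from the same constant-rank computation the paper carries out for $f$), but it deserves a citation or a line of justification rather than being taken for granted.
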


As an initial application of the Matrix Liberation theorem, we completely resolve the inverse singular value problem for the 
pattern
\[ 
C_6= \left[ 
\begin{array}{ccc}
1 & 1 & 0 \\
0 & 1 & 1 \\
1 & 0 & 1 
\end{array}
\right] 
\]
whose bigraph is a $6$-cycle.

\begin{corollary}
There exists a matrix with pattern $C_6$ and singular values $\sigma_1\geq \sigma_2 \geq \sigma_3 \geq 0$ if and only 
if $\sigma_2>0$ and $\sigma_1 \neq  \sigma_3$.
\end{corollary}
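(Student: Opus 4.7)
I will prove both directions of the corollary.

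For necessity, I rule out $\sigma_2 = 0$ and $\sigma_1 = \sigma_3$. If $\sigma_2 = 0$, the matrix has rank at most $1$, so it equals $\mathbf u\mathbf v^\top$, and its nonzero positions form the combinatorial rectangle $\{(i,j): u_i \ne 0,\ v_j \ne 0\}$. But $C_6$'s positions are not rectangular: $(1,1),(2,2)\in C_6$ would force $u_1, u_2, v_1, v_2$ nonzero, hence $(2,1)$ nonzero, contradicting $C_6$. If $\sigma_1 = \sigma_3$, all three singular values coincide. The all-zero case is excluded since $O\notin\mathcal Z(C_6)$, so Theorem~\ref{ortho} would yield a row-orthonormal, hence orthogonal, $Q\in\mathcal Z(C_6)$. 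Writing
\[
Q = \begin{bmatrix} a & b & 0 \\ 0 & c & d \\ e & 0 & f \end{bmatrix}
\]
with all six displayed entries nonzero, the inner product of rows $1$ and $2$ equals $bc\ne 0$, contradicting orthonormality.

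For sufficiency, the distinct positive case is immediate from Corollary~\ref{distinct} since $C_6$ has term-rank $3$. The remaining cases I handle by the Bifurcation Theorem, which I will apply to an SSVP realization of a nearby distinct positive triple. This requires knowing that the SSVP is present and dense in $\mathcal Z(C_6)$. Parameterizing
\[
A = \begin{bmatrix} x_1 & y_1 & 0 \\ 0 & x_2 & y_2 \\ y_3 & 0 & x_3 \end{bmatrix},
\]
every $Y$ with $A \circ Y = O$ has the form $Y=\begin{bmatrix} 0 & 0 & a \\ b & 0 & 0 \\ 0 & c & 0 \end{bmatrix}$. Writing out the six scalar equations from $A^\top Y = Y^\top A$ and $YA^\top = AY^\top$ decouples them into three independent $2\times 2$ systems in the pairs $(a,b), (b,c), (a,c)$ with determinants $x_1^2 - y_2^2$, $y_3^2 - x_2^2$, and $y_1^2 - x_3^2$; the SSVP holds as soon as any one of these is nonzero. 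The specific matrix $\begin{bmatrix} 2 & 1 & 0 \\ 0 & 3 & 1 \\ -30 & 0 & 5 \end{bmatrix}$ satisfies this, and by Theorem~\ref{open} the SSVP matrices form an open dense subset of $\mathcal Z(C_6)$.

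Given any valid target $(\sigma_1,\sigma_2,\sigma_3)$, I approximate it by a distinct positive triple $(\tau_1,\tau_2,\tau_3)$, realize that triple by Corollary~\ref{distinct}, and use density to replace the realization by an SSVP matrix $A\in\mathcal Z(C_6)$ with $\Sigma(A)$ as close to the target as desired. Using the SVD $A = U\Sigma_A V^\top$, the matrix $M = U\,\mathrm{diag}(\sigma_1,\sigma_2,\sigma_3)\,V^\top$ has $\Sigma(M) = (\sigma_1,\sigma_2,\sigma_3)$ exactly and $\|M - A\| = \|\Sigma_A - (\sigma_1,\sigma_2,\sigma_3)\|$, which can be made arbitrarily small. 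The Bifurcation Theorem then yields $B\in\mathcal Z(C_6)$ with the target singular values. The main obstacle is the SSVP genericity calculation; once the reduction to the three $2\times 2$ systems is in hand, Bifurcation handles every remaining case uniformly. A direct Matrix Liberation approach from natural starting matrices (diagonal or cyclic permutation) tends to fail for the repeated-singular-value cases, since $\mathrm{Tan}^\Sigma_A$ then couples certain off-diagonal entries and prevents the filled pattern from being exactly $C_6$.
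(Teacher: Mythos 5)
Your necessity argument is correct and in fact more explicit than the paper's: the rank-one rectangle observation for $\sigma_2=0$ and the row-inner-product computation showing $C_6$ is not the pattern of an orthogonal matrix fill in details the paper leaves to the reader. Your reduction of the SSVP condition on $\mathcal Z(C_6)$ to the three decoupled $2\times 2$ systems with determinants $x_1^2-y_2^2$, $x_2^2-y_3^2$, $y_1^2-x_3^2$ is also correct and is a nice observation not in the paper, giving an explicit description of the (dense, open) SSVP locus in $\mathcal Z(C_6)$.

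The sufficiency argument, however, has a genuine gap in the final step. The Bifurcation Theorem, for a \emph{fixed} SSVP matrix $A$, supplies a threshold $\delta(A)>0$ and guarantees the conclusion only for $M$ with $\|M-A\|<\delta(A)$. You make $\|M-A\|=\|\Sigma(A)-\sigma\|$ small by \emph{varying} $A$ over a sequence of SSVP matrices whose singular values approach the target $\sigma$; but as $A$ varies, $\delta(A)$ varies too, and nothing in your argument rules out $\delta(A)\to 0$ at least as fast as $\|\Sigma(A)-\sigma\|$ (for instance, the approximating $A$ could be forced toward the boundary of $\mathcal{Z}(C_6)$ or toward the SSVP-failure variety). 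In other words, you have shown the set of achievable singular value lists is open and contains a dense subset of valid triples, which does not by itself imply it is all valid triples. Closing the gap would require either a uniform lower bound on $\delta(A)$ over a compact family of approximants (a nontrivial compactness argument you do not supply) or an approach that hits the target list exactly rather than in the limit.

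That is exactly what the paper does. For the two subtle cases ($\sigma_1>\sigma_2>\sigma_3=0$ and the case with a repeated nonzero singular value), the paper starts from a matrix $M$ with pattern a proper subpattern of $C_6$ that already has the target singular values \emph{exactly}---for instance
\[
M=\begin{bmatrix} a & b & 0 \\ 0 & c & 0 \\ 0 & 0 & 1 \end{bmatrix}
\]
with singular values $1,1,\sigma$ built from \Cref{Path}---verifies the SSVP relative to $C_6$, exhibits a specific element of $\mathrm{Tan}^{\Sigma}_M$ filling in exactly the two missing positions, and invokes the Matrix Liberation Theorem. Your closing remark that the Matrix Liberation route ``tends to fail'' for repeated singular values is accurate only for the naive diagonal or cyclic-permutation starting matrices (where, as you say, $(2,3)$ and $(3,2)$ entries of $\mathrm{Tan}^{\Sigma}_A$ are forced to be negatives of each other); the nondiagonal starting matrix above breaks precisely that symmetry. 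So the paper's route is not merely an alternative---it is what avoids the quantifier issue your Bifurcation argument runs into.
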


\begin{proof}
As $C_6$ is not the pattern of a row-orthogonl matrix,
by \Cref{ortho} there is no matrix with pattern $C_6$
having all singular values equal. As every matrix with pattern $C_6$ has rank at least two, there is no matrix with pattern $C_6$ having $0$ as a singular value of multiplicity 2.  Hence the conditions on the $\sigma$ are necessary. 

\bigskip\noindent
We prove sufficiency by cases.

\bigskip\noindent
{\bf Case 1.} $ \sigma_1>\sigma_2> \sigma_3>0$. 
\\ 
By \Cref{distinct}, there exists a matrix with pattern $C_6$ and these singular values.

\bigskip\noindent
{\bf Case 2.} $ \sigma_1>\sigma_2> \sigma_3=0$. 
\\ 
There exist positive reals $a, b, c, d$ such that 
 $a^2 \neq c^2$, $a^2+ b^2=\sigma_1^2$, $c^2+d^2=\sigma_2^2$, and  
the matrix 
\[ 
N= \left[ \begin{array}{ccc}
a & b & 0 \\
0 & 0 & c \\
0 & 0 & d 
\end{array}
\right] 
\]
has singular values $\sigma_1$, $\sigma_2$ and $0$. 
To verify that  $N$ has the SSVP property with respect 
to the $C_6$,
let 
\[ 
X=\left[\begin{array}{ccc}
0& 0 & x\\ 
y & 0 & 0\\
0 & z & 0
\end{array} 
\right],
\]
and assume that both $N\trans X$ and $X N\trans$ are symmetric. 
Then 
\begin{eqnarray*}
ax&=&cy\\ 
bx&=&dz\\ 
cx&=&ay\\
bz&=&dx.
\end{eqnarray*}
The first and third of these equations imply that $a^2xy=c^2xy$.
Since $a^2\neq c^2$, $xy=0$. Hence $x=0$ or $y=0$. In either case, 
the first of the equations imply that both $x$ and $y$ equal $0$. 
Now the second equation implies that $z=0$.  Hence, $X=O$, and 
$N$ has the SSVP with respect to the desired pattern.

Consider the skew-symmetric matrices
\[
K=\left[ 
\begin{array}{ccc}
0 & (d^2-b^2)c & (a^2-c^2)d \\
(b^2-d^2)c & 0 & 0 \\
(c^2-a^2)d & 0 &0 
\end{array} \right] \mbox{ and } 
\]
\[   
L= 
\left[
\begin{array}{ccc}
            0    &          0  & (b^2 - d^2)a\\
             0   &0 & (c^2 - a^2)b\\
(d^2 - b^2)a & (a^2 - c^2)b &              0
\end{array} \right]. 
\]
One can verify that 
\[ 
KN+NL = 
\left[
\begin{array}{ccc}
0 & 0 & 0\\
0 &  (a^2 + b^2 - c^2 - d^2)bc &  0 \\
-(a^2 + b^2 - c^2 - d^2)ad &  0 & 0
\end{array} 
\right]
\] 
and $KN+NL\in \mbox{Tan}_N^{\Sigma}$.
Hence the $(2,2)$ and $(3,1)$ entries of $KN+NL$ are positive and negative respectively.
By the Matrix Liberation Theorem, there exist a matrix whose  pattern is
$C_6$
having singular values $\sigma_1$, $\sigma_2$ and $0$. 

\bigskip\noindent
{\bf Case 3.} $ \sigma_1=\sigma_2> \sigma_3=0$. 
\\ 
Note that  
\[ 
\frac{1}{\sqrt{3}}\left[ 
\begin{array}{rrr}
1 & 1 & 0 \\
0 & 1 & 1 \\
-1 & 0 & 1
\end{array} 
\right]
\]
has pattern $C_6$, and singular values $\sigma_1$, $\sigma_2$ and $0$.

\bigskip\noindent
{\bf Case 4.} $\sigma_1>\sigma_3>0$, and either 
$\sigma_1=\sigma_2$ or $\sigma_2=\sigma_3$.

Without loss of generality we may assume the desired 
singular values are 
 are $\sigma$, $1$ and $1$,
where $\sigma$ is a positive real number not equal to $1$. 
By \Cref{Path}, there exist positive reals $a$, $b$, $c$ such that 
the matrix 
\[ 
N=\left[ 
\begin{array}{cc}
a& b 
\\
0 & c
\end{array}
\right] 
\] 
has singular values $\sigma$ and $1$. Hence the matrix 
\[ 
M=\left[
\begin{array}{ccc}
a & b & 0 
\\
0 & c & 0 \\
0 & 0 & 1 
\end{array}
\right] 
\]
has singular values $1$, $1$ and $\sigma$.

We can verify that $M$ has the SSVP relative to 
pattern 
\[
P=\left[ \begin{array}{ccc}
1 & 1& 0 \\
0 & 1 & 1 \\
1 & 0 & 1
\end{array} 
\right] 
\]
 by considering a matrix $X$ of the 
form
\[ 
X= \left[ \begin{array}{ccc} 0 & 0 & x \\ y & 0 & 0 \\ 0 & z & 0 
\end{array} \right] 
\]
such that $M^TX$ and $XM^T$ are symmetric.
The symmetry of $M^TX$ is equivalent to $cy=0$, $ax=0$ and $bx=z$.
Since, each of $a$, $b$, and $c$ is nonzero, each of $x$, $y$ and $z$ are zero.  Thus, $X=O$.

Now let 
\[
K= 
\left[ \begin{array}{rrr}
0 & 0 & -1 \\
0 & 0 & 0 \\
1 & 0 & 0 \end{array} 
\right] 
\mbox{ and } L= \left[ \begin{array}{ccc} 
0 & 0 & (1-b^2)/a 
\\
0 & 0 & b \\
(b^2-1)/a & -b & 0 
\end{array} \right].
\]
Then 
\[ 
KM+ML= \left[ 
\begin{array}{ccc}
0 & 0 & 0 \\
0 & 0 & bc\\
(a^2 + b^2 -1)/a & 
0 & 0 
\end{array} 
\right],
\]
and $KM+ML$ is in $\mbox{Tan}^{\Sigma}_A$.

Thus, by the Matrix Liberation Theorem,
there exists a matrix $B$ with singular values $1$, $1$ and $\sigma$ whose pattern is that of $A$ in the direction of $KN+NL$. 
\end{proof}

\section{Applications}
As another consequence of the Matrix Liberation Theorem, we characterize the $m\times m$
patterns having term-rank $m$ and allowing each prescribed collection of distinct singular values, one of which is $0$. We begin with a result patterns that are direct sum, and not of full term-rank.

\begin{lemma}
\label{singsum}
Let $M$ be an $(r+1)\times r$ matrix and $N$ an $s\times (s+1)$ matrix
such that both $M\trans$ and $N$ have the SSVP, each singular value of $M\trans$ 
and of $N$ is positive,  $\Sigma(M) \cap \Sigma(N)= \emptyset$, and the left-nullspace of $M$, respectively the nullspace of $N$, is spanned by a nowhere zero vector. 
Then the following hold:
\begin{enumerate}
    \item[\rm (a)] For each $(r+1) \times (s+1)$  pattern $P$ with at least 
     two nonzero entries, there is a matrix in 
$\mbox{\rm Tan}^{\Sigma}_{M\oplus N}$
     whose pattern, $S$, is that of $M\oplus N$ in the direction of
     \[
     \widehat{P}=
     \left[ \begin{array}{c|c} O & P \\ \hline O & O \end{array} \right]. 
     \] 
    \item[\rm (b)] $M\oplus N$ has the SSVP with respect to $S$.
    \item[\rm (c)] Every superpattern of $S$
    allows a matrix whose singular values are those of $M \oplus N$. 
\end{enumerate}
\end{lemma}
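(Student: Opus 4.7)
The plan is to prove (a), (b), (c) in sequence, with (c) following immediately from (a), (b), the Matrix Liberation Theorem, and the Superpattern Theorem. Write $A = M \oplus N$ and let $\mathbf{u} \in \mathbb{R}^{r+1}$, $\mathbf{v} \in \mathbb{R}^{s+1}$ be the nowhere-zero vectors spanning the left-nullspace of $M$ and the nullspace of $N$, respectively. Any element of $\mbox{Tan}^{\Sigma}_A$ has the form $AK + LA$ with $K$, $L$ skew-symmetric; decompose these in block form consistent with the column split $(r, s+1)$ and the row split $(r+1, s)$. Taking all diagonal blocks of $K$ and $L$ to be zero and writing $X = K_{12}$, $Y = L_{12}$, the matrix $AK + LA$ has top-left and bottom-right blocks equal to $O$, top-right equal to $G := MX + YN$, and bottom-left equal to $-(NX\trans + Y\trans M)$. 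The vanishing of the diagonal blocks automatically matches the ``direction of $\widehat{P}$'' pattern at every position where $A$ is zero, provided the top-right has pattern exactly $P$ and the bottom-left vanishes.

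For (a), multiplying $MX + YN = G$ on the left by $M\trans$ and substituting $M\trans Y = -XN\trans$ (the transpose of the bottom-left $=O$ condition) yields the Sylvester-type equation $M\trans M X - X N\trans N = M\trans G$. By \Cref{sylvester}, since $M\trans M$ is positive definite with spectrum $\{\sigma^2 : \sigma \in \Sigma(M)\}$ and $N\trans N$ has spectrum $\{0\} \cup \{\sigma^2 : \sigma \in \Sigma(N)\}$, the hypothesis $\Sigma(M) \cap \Sigma(N) = \emptyset$ together with the positivity of $\Sigma(M\trans)$ makes these spectra disjoint, so the Sylvester operator is invertible and $X$ is uniquely determined by $G$. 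The equation $M\trans Y = -XN\trans$ then solves for $Y$ up to an additive term $\mathbf{u}\mathbf{z}\trans$, and a direct residual computation (using $M\trans H = 0$ for $H = G - MX - Y_0 N$, so $H = \mathbf{u}\mathbf{q}\trans$, and then $N\trans\mathbf{z} = \mathbf{q}$ is solvable iff $\mathbf{q}\perp\mathbf{v}$ iff $H\mathbf{v}=0$) shows that $MX + YN = G$ can be achieved by a choice of $\mathbf{z}$ precisely when the single scalar condition $\mathbf{u}\trans G \mathbf{v} = 0$ holds. In the $k$-dimensional space of matrices supported on $P$, the linear functional $G \mapsto \mathbf{u}\trans G \mathbf{v} = \sum_{(i,j) \in P} u_i v_j G_{ij}$ is nontrivial because each $u_i v_j \neq 0$, so its kernel has dimension $k - 1 \geq 1$ (using the hypothesis $k\geq 2$), and a generic element of this kernel is nonzero at every position of $P$. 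Fixing such a $G$ yields the required element of $\mbox{Tan}^{\Sigma}_A$.

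For (b), suppose $Z$ satisfies $A\trans Z$ and $Z A\trans$ symmetric and $S \circ Z = O$. Decomposing $Z$ into the same four blocks $Z_{ij}$, the Schur-product condition gives $M \circ Z_{11} = O$, $P \circ Z_{12} = O$, $N \circ Z_{22} = O$, with $Z_{21}$ unconstrained by $S$. The symmetry of the $(1,1)$- and $(2,2)$-diagonal blocks of $A\trans Z$ and $Z A\trans$ combined with the SSVP of $M\trans$ (applied to the test matrix $Z_{11}\trans$) and the SSVP of $N$ (applied directly to $Z_{22}$) force $Z_{11} = O$ and $Z_{22} = O$. The off-diagonal symmetry conditions become $M\trans Z_{12} = Z_{21}\trans N$ and $Z_{12} N\trans = M Z_{21}\trans$; eliminating $Z_{12}$ yields $NN\trans Z_{21} = Z_{21} M\trans M$, which by \Cref{sylvester} and the disjoint-spectra hypothesis forces $Z_{21} = O$. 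Then $M\trans Z_{12} = O$ forces each column of $Z_{12}$ to be a multiple of $\mathbf{u}$, and $Z_{12} N\trans = O$ forces each row to be a multiple of $\mathbf{v}\trans$, so $Z_{12} = \alpha\mathbf{u}\mathbf{v}\trans$ for some scalar $\alpha$; since $\mathbf{u}\mathbf{v}\trans$ is nowhere zero and $P$ has at least one nonzero entry, $P \circ Z_{12} = O$ forces $\alpha = 0$, hence $Z = O$. Part (c) is then immediate: (a) and (b) verify the hypotheses of the Matrix Liberation Theorem for $A$ and $S$, producing a matrix $B$ with pattern $S$, singular values $\Sigma(A)$, and the SSVP, after which \Cref{superpattern} extends this to every superpattern of $S$. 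The main obstacle is the derivation in (a): cleanly extracting the scalar compatibility $\mathbf{u}\trans G \mathbf{v} = 0$ and verifying it is both necessary and sufficient for the coupled linear system in $(X, Y)$ to be solvable; the rest is bookkeeping with \Cref{sylvester} and the disjoint-spectra hypothesis.
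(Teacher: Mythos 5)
Your argument is correct and follows the paper's overall plan (same block skew-parameterization of $\mbox{Tan}^{\Sigma}_{M\oplus N}$, same reduction to Sylvester equations in disjoint spectra, same appeal to the Matrix Liberation and Superpattern Theorems for (c)), but your treatment of (a) is genuinely more complete than the paper's. The paper shows the map $\Theta\colon (K,L)\mapsto(KN+ML,\,-K\trans M-NL\trans)$ is injective, dimension-counts that its image has codimension one, and then asserts ``it follows'' that some $G$ supported exactly on $P$ is reachable with the bottom-left block zero --- never explaining where the nowhere-zero hypothesis on $\mathbf u,\mathbf v$ enters part (a). You close that gap: by eliminating $X$ through the invertible Sylvester operator $X\mapsto M\trans MX-XN\trans N$ and tracking the residual in $Y$, you identify the precise codimension-one obstruction as the scalar condition $\mathbf u\trans G\mathbf v=0$, whose coefficients $u_iv_j$ are all nonzero on the support of $P$ because $\mathbf u,\mathbf v$ are nowhere zero; combined with $k\geq2$, this cleanly produces a $G$ that is nonzero at every position of $P$. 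In (b) your route is also slightly streamlined: you eliminate toward $NN\trans Z_{21}=Z_{21}M\trans M$, where both coefficient matrices are positive definite with disjoint spectra, so $Z_{21}=O$ outright; the paper instead reaches $MM\trans V=VN\trans N$, where both sides share the eigenvalue $0$, and so must invoke the nowhere-zero structure of the resulting rank-one $V$. Both orders work, but yours separates the Sylvester step from the nowhere-zero step more cleanly and avoids the paper's incomplete final sentence in (b). Part (c) matches the paper exactly.
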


\begin{proof}

Let $K$ be an $(r+1)\times s$ matrix, and $L$ be a
$r \times (s+1)$ matrix.
Then 
\[ 
\renewcommand*{\arraystretch}{1.25}
\begin{array}{l}
\left[ 
\begin{array}{c|c}
O & K\\ \hline 
-K\trans & O \end{array} \right] 
(M\oplus N) +
(M\oplus N) 
\left[ 
\begin{array}{c|c}
O & L\\ \hline 
-L\trans &  O \end{array} 
\right]  \\ 
\quad  =  \left[ 
\begin{array}{c|c}
O & KN+ML \\ \hline
-K\trans M -NL\trans & O 
\end{array} 
\right] 
\end{array}
\]
is in 
$\mbox{Tan}^{\Sigma}_{M\oplus N}$. Consider the linear transformation  
\[ \Theta \colon  \mat^{(r+1)\times s} \times \mat^{r\times (s+1)} \rightarrow \mat^{(r+1)\times (s+1)} \times \mat^{s\times r}\]
 by 
 \[ \Theta((K,L))= (KN+ML, -K\trans M-NL\trans). \]
 The kernel of $\Theta$ consists 
 of matrices $K$ and $L$ such that 
 \begin{eqnarray}
 \label{eqns1}
     KN& = &-ML \\
      \label{eqns2}
     K\trans M & =& -NL\trans .
 \end{eqnarray}
Pre-multiplying (\ref{eqns1}) by $M\trans$ and using $(\ref{eqns2})$
gives 
$M\trans ML= L N\trans N$.
Since $M\trans M$ and $N\trans N$ have no common eigenvalues, 
\Cref{sylvester} implies $L=O$.
Hence by $(\ref{eqns1})$, and the fact $0$ is not a singular value of $N$, 
$K=O$. Hence $\Theta$ is one-to-one, and the image
of $\Theta$ is a subspace of $\mat^{(r+1)\times (s+1)} \times \mat^{s\times r}$
with co-dimension 1.
It follows that 
$\mbox{Tan}^{\Sigma}_{M\oplus N}$
contains a matrix with pattern  that of $M\oplus N$ in the direction of $\widehat{P}$.
This establishes (a). 

We claim that $M\oplus N$ has the SSVP with respect to $S$.
To prove this, let 
\[ 
Y= \left[ \begin{array}{c|c} 
U & V \\ \hline 
W& X 
\end{array} \right] 
\]
be a matrix such that $S \circ Y=O$, 
$(M\trans \oplus N\trans) Y$ is symmetric and $Y(M\trans \oplus N \trans)$
is symmetric.   Then $N \circ X=O$, and both $N\trans X$ and $X N\trans$ 
are symmetric. As $N$ has the SSVP, $X=O$.  Similarly, $U=O$.  Additionally, 
we have the following equations.
\begin{eqnarray}
\label{eqns3}
VN\trans & = & M W\trans \\
\label{eqns4}
M\trans V&=&W\trans N.
\end{eqnarray}
Pre-multiplying (\ref{eqns4}) by $M$ and using 
$(\ref{eqns3})$, we get $MM\trans V=VNN\trans$.
Thus by the hypothesis and \Cref{sylvester},
$V$ is a scalar multiple of a nowhere zero matrix.
Since $Y \circ S=O$, $V=O$. Equation (\ref{eqns3})
and the fact that the columns of $M$ are linearly independent imply that $W=O$, 
Post-multiplying (\ref{eqns4}) by $N\trans$ and   Hence $Y=O$. This establishes (b).

Statement (c) now follows from (a), (b) and the Superpattern Theorem. 
\end{proof}

\begin{corollary}
\label{cycle}
Let $P$ be the pattern whose bigraph is a $(2n)$-cycle with $n\geq 1$, 
and let $0=\sigma_1< \cdots < \sigma_n$ be $n$ distinct real numbers including $0$. 
Then $P$ allows a matrix having the SSVP and singular values $\{\sigma_1, \ldots, \sigma_n\}$.
\end{corollary}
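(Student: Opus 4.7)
The plan is to apply \Cref{singsum} to two path-type matrices whose direct sum produces the desired singular values and whose bigraph, after insertion of two new entries, is the required $2n$-cycle. The case $n=1$ is vacuous (no $1\times 1$ matrix of nonzero pattern has $0$ as a singular value), and $n=2$ is immediate: the $4$-cycle bigraph forces $P$ to be the $2\times 2$ all-ones matrix, and $\sigma_2\mathbf{u}\mathbf{v}\trans$ with $\mathbf{u},\mathbf{v}$ nowhere-zero unit vectors has singular values $\{0,\sigma_2\}$ and, by \Cref{nowhere0}, has the SSVP. So I would assume $n\geq 3$ from here on.

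Write $n=r+s+1$ with $r,s\geq 1$ and split $\{\sigma_2,\dots,\sigma_n\}$ into two disjoint nonempty lists $\tau_1,\dots,\tau_r$ and $\mu_1,\dots,\mu_s$. By \Cref{Path} (and the analogous construction for even-length path bigraphs) I would build $M$ of size $(r+1)\times r$ whose bigraph is a path on $2r+1$ vertices with positive singular values $\tau_1,\dots,\tau_r$, and $N$ of size $s\times(s+1)$ whose bigraph is a path on $2s+1$ vertices with positive singular values $\mu_1,\dots,\mu_s$. Because $MM\trans$ has rank $r$ inside a space of dimension $r+1$, the singular values of $M\oplus N$ are exactly $\{0,\sigma_2,\dots,\sigma_n\}$, as required.

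To invoke \Cref{singsum} I need $M\trans$ and $N$ to have the SSVP, their spectra to be disjoint and positive, and the left-nullspace of $M$ and the nullspace of $N$ each to be spanned by a nowhere-zero vector. Positivity and disjointness are automatic from the partition. The bidiagonal structure makes the nowhere-zero condition routine: any $\mathbf{v}$ with $\mathbf{v}\trans M=\mathbf{0}\trans$ satisfies $v_i\alpha_i+v_{i+1}\beta_i=0$, where $\alpha_i,\beta_i$ are the nonzero diagonal and subdiagonal entries of $M$, so $v_1\neq 0$ propagates to every coordinate being nonzero, and the same recursion handles $N$. Verifying that the bidiagonal path matrices $M\trans$ and $N$ have the SSVP is the one place that requires real work: one writes out the matrix $X$ permitted by the Hadamard-product constraint and uses the symmetry of $MX$ and $XM$ to propagate zeros in from the two extreme corners of $X$.

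Finally I would apply \Cref{singsum} with $P$ chosen as the $(r+1)\times(s+1)$ pattern whose only nonzero positions are $(1,1)$ and $(r+1,s+1)$. Inserting edges at these two positions joins row $1$ of $M$ to the first column of $N$, and row $r+1$ of $M$ to the last column of $N$; combined with the $M$-path from row $1$ to row $r+1$ and the $N$-path from column $1$ to column $s+1$, the four endpoints concatenate into a single $2n$-cycle bigraph that matches the prescribed pattern. \Cref{singsum}(c) then delivers a matrix with this pattern, the SSVP, and singular values $\{\sigma_1,\dots,\sigma_n\}$. The main obstacle is thus confirming the SSVP for the path matrices themselves; once that is in hand, everything else is clean bookkeeping.
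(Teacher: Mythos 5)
Your proof is correct and follows essentially the same route as the paper: delete two edges of the $2n$-cycle to obtain a disjoint union of two odd-order paths, realize them by \Cref{Path}, and then reattach the deleted edges via \Cref{singsum}. The one genuine difference is the choice of split. The paper fixes $s=1$, so that $N = [\,1\;\,1\,]$ is nowhere zero and thus has the SSVP for free by \Cref{nowhere0}; the only nontrivial SSVP hypothesis it leaves implicit is that the bidiagonal path matrix $\hat M\trans$ has the SSVP. Your generalization to an arbitrary split $n=r+s+1$ with $r,s\ge 1$ is valid, but it forces you to establish the SSVP for \emph{two} bidiagonal path matrices rather than one, without buying anything in return. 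The claim itself — that an $r\times(r+1)$ bidiagonal matrix with nonzero diagonal and superdiagonal has the SSVP — is true and provable by the zero-propagation argument you sketch (the paper also relies on it without writing it out), but since it is the non-routine ingredient you should either prove it or specialize to $s=1$ as the paper does so that only one such verification is needed. Your base cases are fine: $n=1$ is vacuous since a simple bigraph cannot be a $2$-cycle (the paper's displayed ``$n=1$'' is an off-by-one slip and is really the $n=2$ case), and your $n=2$ construction $\sigma_2\mathbf{u}\mathbf{v}\trans$ agrees with the paper's $\tfrac{\sigma_2}{2}J_2$. One small overstatement: \Cref{singsum}(c) as stated only asserts the existence of a matrix with the right pattern and singular values; to also get the SSVP you should invoke the Matrix Liberation Theorem directly (as in the proof of \Cref{singsum}(b)--(c)) or appeal to the openness of the SSVP from \Cref{open}.
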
 

\begin{proof}
Without loss of generality we may assume the ones of $P$ occur
in positions $(1,1), (2,1), (2,2), \ldots, (n,n-1), (n,n)$, and $(1,n)$. 

If $n=1$, 
then 
\[ 
\frac{\sigma_2}{2}\left[ \begin{array}{rr}
1 & 1 \\
1 & 1 \end{array} \right] 
\] 
is a matrix with pattern $P$ that clearly has the SSVP
and singular values $0$ and $\sigma_2$. 

Now assume $n\geq 2$.
By replacing the $(n-1,n-1)$ and $(1,n)$ entries by zeros, we obtain 
a matrix of the form 
\[ 
\left[
\begin{array}{cc} 
M & O \\ 
O & N 
\end{array} 
\right],
\] 
where $M$ is $(n-1) \times (n-2)$  and has bigraph a path on $2n-3$ vertices, and 
$N= [ 1 \; 1] $.

By \Cref{Path} there exists a matrix $\hat{M}$ with pattern $M$ and singular values 
$\{0, \sigma_2, \ldots, \sigma_{n-1} \}$ and a matrix $\widehat{N}$ 
with pattern $N$ and singular value $ \sigma_n$. 
The result now follows from the \Cref{singsum}.
\end{proof}

Let $P=[p_{ij}]$ be an $n\times n$ pattern with each diagonal entry equal to $1$. The 
{\it digraph of $P$} is the directed graph with vertices $1$, $2$, \ldots, $n$ 
and an arc from $i$ to $j$ 
if and only if $i\neq j$ and 
$p_{ij} \neq 0$.  Thus the digraph of $P$ has no loops. 
\begin{theorem}
Let $P$ by an $m\times m$ pattern having term-rank $m$, 
and assume without loss of generality that $p_{1,1}, \ldots, p_{m,m}=1$.
Let $0=\sigma_1< \sigma_2 < \dots< \sigma_m$. 
Then $P$ allows a matrix with singular values $\{\sigma_1, \ldots, \sigma_m\}$
if and only the digraph of $P$ contains a cycle of length two or more.
\end{theorem}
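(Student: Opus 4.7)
The plan is to separate necessity from sufficiency. For necessity, the idea is to contrapose: if the digraph of $P$ contains no cycle of length $\geq 2$, then it is a DAG, so by topological sorting there is a permutation $\pi$ of $\{1,\ldots,m\}$ with $p_{\pi(i),\pi(j)}=0$ whenever $i>j$. Replacing $P$ by $P_\pi P P_\pi\trans$ (a similarity that preserves singular values of any realization, since the $m\times m$ case of direct sum with $BB\trans$ just becomes a permutation conjugation $P_\pi AA\trans P_\pi\trans$), we may assume $P$ is upper triangular with $1$s on the diagonal. Every matrix with this pattern is upper triangular with nonzero diagonal, hence invertible, hence has $0\notin \Sigma(A)$, contradicting $\sigma_1=0$.

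For sufficiency, I plan to exploit the cycle to produce a direct-sum realization and then apply the Superpattern Theorem. Suppose the digraph of $P$ has a directed cycle of length $k\geq 2$; after relabeling, assume it is $1\to 2\to\cdots\to k\to 1$. Consider the subpattern $P'$ of $P$ with support consisting of the diagonal together with the cycle entries $(1,2),(2,3),\ldots,(k-1,k),(k,1)$. After permuting the last $m-k$ rows/columns into a block, $P'$ becomes a direct sum $C\oplus D$, where $C$ is the $k\times k$ pattern with $1$s in those positions and $D=I_{m-k}$. A quick traversal shows that the bigraph of $C$ is the $2k$-cycle $1\text{--}1'\text{--}k\text{--}k'\text{--}(k{-}1)\text{--}(k{-}1)'\text{--}\cdots\text{--}2\text{--}2'\text{--}1$, so \Cref{cycle} applies to $C$.

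Now choose the singular values to split cleanly: apply \Cref{cycle} to obtain $\hat M$ with pattern $C$, the SSVP, and singular values $\{0,\sigma_2,\ldots,\sigma_k\}$, and take $\hat D=\mbox{diag}(\sigma_{k+1},\ldots,\sigma_m)$, which by \Cref{Diagonal} has the SSVP (diagonal entries are positive and distinct). To invoke the Direct Sum Theorem on $\hat M\oplus \hat D$, I verify its four hypotheses: (a) both blocks are square, (b) each has the SSVP, (c) the two singular value lists are disjoint by the strict ordering of the $\sigma_i$, and (d) $\hat D$ is square and invertible, so condition (d) is satisfied even though $\hat M$ is singular. Hence $\hat M\oplus \hat D$ has the SSVP. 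Its pattern is a subpattern of $P$ with the same singular value list $\{\sigma_1,\ldots,\sigma_m\}$, so the Superpattern Theorem (\Cref{superpattern}) yields a matrix with pattern $P$ and these singular values.

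The main obstacle is verifying the technical hypothesis (d) of the Direct Sum Theorem, since $\hat M$ does not have linearly independent rows (having $\sigma=0$); the argument only works because the other summand $\hat D$ is forced to be square and invertible, which motivates loading all nonzero singular values except the $k-1$ used by the cycle block onto the diagonal summand. The remaining verifications (the bigraph of the cycle subpattern being the $2k$-cycle, and the topological-sort step for the forward direction) are routine.
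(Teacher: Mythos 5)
Your proof is correct and follows essentially the same route as the paper: the paper's sufficiency direction is a single sentence citing the Direct Sum Theorem, \Cref{cycle}, and the Superpattern Theorem, and your argument is exactly the expansion of that sentence (including the key observation that \Cref{cycle} forces the cycle block to be singular, so hypothesis (d) of the Direct Sum Theorem is met by making the complementary block a positive diagonal matrix), while the necessity direction via a topological sort to an upper-triangular pattern is identical to the paper's.
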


\begin{proof}
If the digraph of $P$ contains a cycle of length two or more, then by \Cref{dsum}, \Cref{cycle} and the  Superpattern Theorem, $P$ allows a matrix with the desired singular values. 

Otherwise, the digraph of $P$ contains no cycle, and hence $P$ is permutationally 
equivalent to a upper triangular matrix with ones on the diagonal.  Hence every 
matrix with pattern $P$ is invertible,  no matrix in 
$\mathcal{Z}(P)$ 
allows zero as a singular value. 
\end{proof}

\section{Theoretical Underpinnings}
\label{theory}
In this section, we establish the theoretical underpinnings of 
the strong singular value property. The results, like the 
analogous ones for other strong properties that have been established and studied, utilize the  Inverse Function Theorem (IVT) (see 
Theorem 3.3.2 of \cite{krantz2013}).  Throughout, we use 
$\| \cdot \|$ for the standard Euclidean  norm on $\mathbb{R}^n$.

We begin with some linear algebraic results.
Recall we view  $\mat^{m\times n}$ as an inner product space with the inner product $\langle M, N\rangle=\mbox{tr}(M\trans N)$.  We use $S^\perp$ to denote the \textit{orthogonal complement} of a subset $
S$ of $\mat^{m\times n}$ with respect to this inner product.  Thus $\mbox{Sym}(n)^{\perp}= \mbox{Skew}(n)$, and $\mbox{Skew}(n)^{\perp}= 
\mbox{Sym}(n)$.

It is known that if $K$ is a skew-symmetric matrix, 
then $e^{K}=I+K + K^2/2! + \cdots$
is an orthogonal matrix.   
Given an $m\times n$ symmetric matrix $A$ with pattern $P$, 
we will need to know the 
orthogonal complements of the 
spaces 
$\{ KA: K \in \mbox{Skew}(m)\}$, 
$\{ AL: K \in \mbox{Skew}(m)\}$, 
and $\mbox{Span} (\mathcal{Z}(P))$.

\begin{lemma}
\label{perp}
Let $A\in \mathbb{R}^{m\times n}$ with pattern $P=[p_{ij}]$.
Then 
\begin{itemize}
\item[\rm (a)] $\{ KA: K \in \mbox{\rm Skew}(m)\}^{\perp}= \{X \in \mathbb{R}^{m\times n}: 
XA\trans  \mbox{ is symmetric} \}.$ 
\item[\rm (b)] $\{ AL: L \in \mbox{\rm Skew}(n)\}^{\perp}= \{X \in \mathbb{R}^{m\times n}:  
A\trans X \mbox{ is symmetric}\}$.
\item[\rm (c)]  $\mbox{\rm Span}( \mathcal{Z}(P))^{\perp} = \{ X \in \mathbb{R}^{m\times n}: A \circ X=O\}$.
\item[\rm (d)] $A$ has the SSVP if and only if 
 \[ \{ KA: K \in \mbox{\rm Skew}(m)\} + \{ AL: L \in \mbox{\rm Skew}(n)\} + \mbox{\rm Span}( \mathcal{Z}(P))=
 \mathbb{R}^{m\times n}.\]
\item[\rm (e)] $A$ has the SSVP if and only if $\mbox{\rm Tan}_A^{\Sigma}=\mathbb{R}^{m\times n}$.
\end{itemize}
\end{lemma}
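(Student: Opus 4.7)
My plan is to handle (a)--(c) by direct trace computations exploiting the inner product $\langle M,N\rangle = \operatorname{tr}(M\trans N)$, then deduce (d) via the elementary duality $(U+V+W)^{\perp}=U^{\perp}\cap V^{\perp}\cap W^{\perp}$, and finally obtain (e) by recognizing that $\operatorname{Tan}^{\Sigma}_A$ equals the sum of the first two subspaces in (d).

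For (a), I would compute, for any $m\times n$ matrix $X$ and any $K\in\operatorname{Skew}(m)$,
\[
\langle KA,X\rangle \;=\; \operatorname{tr}(A\trans K\trans X)\;=\;\operatorname{tr}(K\trans XA\trans),
\]
using the cyclic property of trace. Since $\operatorname{Sym}(m)$ and $\operatorname{Skew}(m)$ are orthogonal complements in $\mathbb{R}^{m\times m}$, the right-hand side vanishes for every skew $K$ if and only if $XA\trans$ is symmetric. Part (b) is analogous: $\langle AL,X\rangle = \operatorname{tr}(L\trans A\trans X)$, which vanishes for every skew $L\in\operatorname{Skew}(n)$ iff $A\trans X$ is symmetric. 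For (c), I would first note that $\operatorname{Span}(\mathcal{Z}(P))$ equals the coordinate subspace of matrices supported on the ones of $P$: given any such $Y$, the matrix $A+\epsilon Y$ lies in $\mathcal{Z}(P)$ for $\epsilon$ sufficiently small (since $A$ is nonzero on those positions), so $Y=\epsilon^{-1}((A+\epsilon Y)-A)$ is in the span. Orthogonality to this coordinate subspace is then literally the condition $x_{ij}=0$ whenever $a_{ij}\neq 0$, i.e.\ $A\circ X=O$.

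For (d), the SSVP asserts that the simultaneous solutions $X$ of the three conditions reduce to $X=O$, which by parts (a)--(c) is exactly the statement
\[
\{KA:K\in\operatorname{Skew}(m)\}^{\perp}\;\cap\;\{AL:L\in\operatorname{Skew}(n)\}^{\perp}\;\cap\;\operatorname{Span}(\mathcal{Z}(P))^{\perp}\;=\;\{O\}.
\]
Taking orthogonal complements and using the identity $(U+V+W)^{\perp}=U^{\perp}\cap V^{\perp}\cap W^{\perp}$ (valid in any finite-dimensional inner product space) converts this to the sum of the three subspaces being all of $\mathbb{R}^{m\times n}$. Finally, (e) follows at once from (d) together with the identification $\operatorname{Tan}^{\Sigma}_A=\{KA:K\in\operatorname{Skew}(m)\}+\{AL:L\in\operatorname{Skew}(n)\}$, combined (if needed) with the $\operatorname{Span}(\mathcal{Z}(P))$ summand that appears in the version of (e) previewed in Section~4.

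The main ``obstacle'' is really just bookkeeping: keeping transposes, cyclic rearrangements, and the distinction between $\operatorname{Skew}(m)$ and $\operatorname{Skew}(n)$ straight, and verifying the support characterization of $\operatorname{Span}(\mathcal{Z}(P))$ cleanly so that (c) is unambiguous. Once those are in hand, the lemma is essentially a compilation of three routine orthogonality computations followed by one application of perp-duality.
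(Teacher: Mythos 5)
Your proposal is correct and follows essentially the same path as the paper: the cyclic-trace computation for (a) and (b), the coordinate-subspace description for (c), perp-duality for (d), and (e) as a reformulation via $\operatorname{Tan}^{\Sigma}_A$. You even caught (correctly) the paper's slight ambiguity about whether $\operatorname{Tan}^{\Sigma}_A$ absorbs the $\operatorname{Span}(\mathcal{Z}(P))$ summand, which the paper glosses over.
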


\begin{proof}
Note that $X \in \{ KA: K \in \mbox{Skew}(m)\}^{\perp}$ if and only if $\mbox{tr}(X\trans KA)=0$
for all $K \in \mbox{Skew}(m)$.  As 
\[ \mbox{tr}(X\trans K A)=\mbox{tr}(A X\trans K)= \mbox{tr}((X A\trans)\trans K)= 
\langle XA\trans, K \rangle,\] 
we see that $X \in \{ KA: K \in \mbox{Skew}(m)\}^{\perp}$ if and only if $XA\trans$
 is symmetric. Hence (a) holds.  Statement (b) follows by a similar argument. Statement (c) follows from noting that  $X=[x_{ij}] \in \mathcal{Z}(P)^{\perp}$ if and only if $x_{ij}=0$ whenever 
$p_{ij} \neq 0$. By the definition of SSVP,  $A$ has the SSVP if and only if 
\begin{equation}
\label{sum}
 \{ X: A\trans X \mbox{ is symmetric}\} 
 \cap \{ X: XA\trans \mbox{ is symmetric}\} \cap \{ AL: \mbox{Span}( \mathcal{Z}(P))^{\perp}= \{ O\}.
\end{equation}
Statement (d) follows from (a)-(c) and taking orthogonal complements of both sides of (\ref{sum}).
Statement (e) follows from (d) and the definition of $\mbox{Tan}_A^{\Sigma}$. 
\end{proof}

We next discuss  needed concepts and results about multivariable real functions. 
Let $\mathcal{U}$ be an open subset of $\mathbb{R}^n$. The
function 
$f\colon \mathcal{U} \rightarrow \mathbb{R}^m$ is {\it differentiable} at the 
point $\mathbf u \in \mathcal{U}$ provided there exists a linear transformation 
$D_{\mathbf u}f: \mathbb{R}^n \rightarrow \mathbb{R}^m$
such that 
\[ 
\lim_{\mathbf x \rightarrow \mathbf u}
\frac{\| f(\mathbf x) -f(\mathbf u) -D_{\mathbf u} f(\mathbf x - \mathbf u) \|} 
{ \| \mathbf x - \mathbf u \|}=0.  
\]
It is known that if $D_{\mathbf u}f$ exists, then it is unique. 
In this case, $D_{\mathbf u}f$ is called the {\it total derivative of $f$ at $\mathbf u$}.  The map $\mathcal{L}: 
\mathbb{R}^n \rightarrow \mathbb{R}^m$ by $\mathcal{L}(\mathbf x)=
f(a)+ D_{\mathbf u}f(x-a)$ is called the {\it linearization of $f$ at $\mathbf u$}.

The function $f$ determines functions 
$f_i\colon \mathcal{U} \rightarrow \mathbb{R}$ $(i=1,2, \ldots, m)$ 
with 
\[ f(\mathbf x)=(f_1(\mathbf x), \ldots, f_m(\mathbf x))\]
for all $\mathbf x=(x_1, \ldots, x_n)\in \mathcal{U}$. 
It is known that if  $\frac{\partial{f_i}}{\partial x_j}$ exists and is continuous for each $i$ and $j$ and each $\mathbf x$ in $\mathcal{U}$, then 
$D_{\mathbf u}f$ exists, and is given by 
\[ 
D_{\mathbf u}f (\mathbf  x) = \mbox{Jac}_f\;{\mathbf x}, \]
where $\mbox{Jac}_f$ is the $m\times n$ matrix whose $(i,j)$-entry equals
\[ 
\frac{\partial{f_i}}{\partial x_j} (\mathbf u)
\]
for $i=1,\ldots, m$; and $j=1,\ldots, n$.
The matrix $\mbox{Jac}_f$ is known as the {\it Jacobian matrix of $f$} at $\mathbf x= \mathbf u$.
The relationship between $D_{\mathbf u}$ and $\mbox{Jac}_f$ and basic facts about linear algebra give the following facts. 
\begin{observation}
The following are equivalent:
\begin{itemize}
\item[\rm (a)] $D_{\mathbf u} f$ is an onto map.
\item[\rm (b)] The column space of $\mbox{Jac}_f$ is  $\mathbb{R}^n$.
\item[\rm (c)] The rows of $\mbox{Jac}_f$ are linearly dependent. 
\end{itemize}
\end{observation}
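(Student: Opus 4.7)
The plan is to prove the three equivalences by reducing everything to a standard fact about an $m\times n$ matrix: the rank characterizes both the column span and the row independence. Writing $J = \mbox{Jac}_f$, I would first invoke the formula displayed just above the observation, which states $D_{\mathbf u}f(\mathbf x) = J\mathbf x$. Consequently the image of $D_{\mathbf u}f$ (a subspace of the codomain $\mathbb{R}^m$) coincides exactly with the column space of $J$. This is the one substantive identification; everything else is bookkeeping.

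With that identification in hand, (a) $\Leftrightarrow$ (b) is immediate from the definition of surjectivity: $D_{\mathbf u}f$ is onto precisely when its image exhausts the codomain $\mathbb{R}^m$, which by the preceding paragraph is precisely the condition that the column space of $J$ equals $\mathbb{R}^m$. (Here I am reading (b) as asserting that the column space fills the codomain of $D_{\mathbf u}f$, i.e., $\mathbb{R}^m$; the symbol $\mathbb{R}^n$ in the statement appears to be a typographical slip, since the columns of the $m\times n$ Jacobian lie in $\mathbb{R}^m$.)

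For (b) $\Leftrightarrow$ (c), I would use the basic rank identity: for any $m\times n$ matrix $J$, the dimension of the column space equals the rank of $J$, which equals the dimension of the row space. The column space fills $\mathbb{R}^m$ iff this rank is $m$, which in turn is equivalent to the $m$ rows of $J$ being linearly independent, since there are exactly $m$ of them. (Here too I read (c) as linear independence rather than dependence, because dependence would contradict (a) and (b).) No real obstacle is expected — the observation is just the translation of ``surjective differential'' into a statement about the Jacobian — so the main care is simply in stating the three conditions consistently and flagging the apparent typos in (b) and (c) if they are to be taken literally.
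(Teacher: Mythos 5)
Your argument is correct and is exactly the argument the paper implicitly appeals to (the paper gives no proof, simply asserting that the observation follows from ``the relationship between $D_{\mathbf u}$ and $\mbox{Jac}_f$ and basic facts about linear algebra''). You are also right to flag the two typographical slips: since $\mbox{Jac}_f$ is $m\times n$ with columns in $\mathbb{R}^m$, item (b) should read $\mathbb{R}^m$, and item (c) should read ``linearly \emph{independent}'' to be equivalent to (a) and (b).
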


We are now ready to state the Inverse Function Theorem which relates the 
local invertibility of a function to the invertibility of its derivative (see \cite{krantz2013}).  

\begin{theorem}[\bf Inverse function theorem (IVT)] 
\label{IVT}
\phantom{} \\
Let $\mathcal{U}$ be an open set of $\mathbb{R}^m$,  $\mathbf u \in \mathcal{U}$
and  $f\colon \mathcal{U} \rightarrow \mathbb{R}^m$ be a smooth function on $\mathcal{U}$.  If the Jacobian matrix of $f$ at $\mathbf u$ is invertible
(equivalently, $D_{\mathbf u}f$ is an onto function),  then there exists an open neighborhood $\mathcal{V} \subseteq \mathcal{U}$
of $\mathbf u$ such that the restriction $f|{\mathcal{V}}$ is injective; $f(
\mathcal{V})$
is open, and the inverse of $f|\mathcal{V}$ is differentiable. \end{theorem}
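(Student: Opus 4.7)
The plan is to prove the IVT via the Banach fixed point theorem after a preliminary normalization. First I would translate so that $\mathbf{u} = \mathbf{0}$ and $f(\mathbf{0}) = \mathbf{0}$, and then post-compose with the invertible linear map $(D_{\mathbf{u}}f)^{-1}$ so that, without loss of generality, $D_{\mathbf{0}}f$ is the identity $I$. These reductions preserve smoothness of $f$, and the inverse-function conclusion for the normalized map transfers back to the original via the same composition.

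With this normalization in place, set $\varphi(\mathbf{x}) = \mathbf{x} - f(\mathbf{x})$, so $\varphi(\mathbf{0}) = \mathbf{0}$ and $D_{\mathbf{0}}\varphi = 0$. Continuity of $Df$ provides $r > 0$ such that $\|D_{\mathbf{x}}\varphi\| \leq 1/2$ on the closed ball $\overline{B}(\mathbf{0},r)$, and the mean value inequality upgrades this pointwise bound to the Lipschitz estimate $\|\varphi(\mathbf{x}_1) - \varphi(\mathbf{x}_2)\| \leq \tfrac{1}{2}\|\mathbf{x}_1 - \mathbf{x}_2\|$ on that ball.

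For each $\mathbf{y}$ with $\|\mathbf{y}\| \leq r/2$, the map $T_{\mathbf{y}}(\mathbf{x}) = \mathbf{y} + \varphi(\mathbf{x})$ sends $\overline{B}(\mathbf{0},r)$ into itself and is a $(1/2)$-contraction, so the Banach fixed point theorem produces a unique $\mathbf{x} \in \overline{B}(\mathbf{0},r)$ with $f(\mathbf{x}) = \mathbf{y}$. Taking $\mathcal{V} = f^{-1}(B(\mathbf{0},r/2)) \cap B(\mathbf{0},r)$ then gives an open neighborhood of $\mathbf{0}$ on which $f$ is a bijection onto the open set $B(\mathbf{0},r/2)$. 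The reverse Lipschitz estimate $\|f(\mathbf{x}_1) - f(\mathbf{x}_2)\| \geq \tfrac{1}{2}\|\mathbf{x}_1 - \mathbf{x}_2\|$, obtained by subtracting the $\varphi$-contraction inequality from $\|\mathbf{x}_1 - \mathbf{x}_2\|$, makes $f^{-1}$ Lipschitz and in particular continuous.

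Differentiability of $f^{-1}$ at each $\mathbf{y} = f(\mathbf{x})$, with derivative $(D_{\mathbf{x}}f)^{-1}$, follows from a direct $\epsilon$-$\delta$ estimate that uses the definition of differentiability for $f$, invertibility of $D_{\mathbf{x}}f$ on a smaller neighborhood (valid because the invertible matrices form an open set and $Df$ is continuous), and the continuity of $f^{-1}$ just established. The main obstacle is converting the pointwise bound $\|D_{\mathbf{x}}\varphi\| \leq 1/2$ into a uniform Lipschitz bound on $\varphi$: one needs the mean value inequality along straight-line paths inside $\overline{B}(\mathbf{0},r)$, which requires convexity of the ball and a uniform operator-norm estimate for $D\varphi$. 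Without this step there is no contraction to feed into Banach, and the rest of the argument collapses.
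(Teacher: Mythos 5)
The paper does not prove \Cref{IVT}: it states the Inverse Function Theorem as a black box and cites Krantz \cite{krantz2013}, so there is no ``paper's proof'' to compare against. Your sketch is the standard contraction-mapping argument (normalize so that $D_{\mathbf 0}f=I$, show $\varphi=\mathrm{id}-f$ is a $\tfrac12$-contraction on a small ball via the mean value inequality, solve $f(\mathbf x)=\mathbf y$ as a fixed point of $T_{\mathbf y}=\mathbf y+\varphi$, derive a reverse Lipschitz bound to get continuity of the inverse, and then differentiate the inverse directly), and it is essentially correct. A few small points worth tightening if you were to write it out fully: you must take $r$ small enough that $\overline{B}(\mathbf 0,r)\subseteq\mathcal U$ before invoking the mean value inequality over straight-line segments, since that argument needs the whole segment to lie in the domain; you should verify that when $\|\mathbf y\|<r/2$ the fixed point lands in the \emph{open} ball $B(\mathbf 0,r)$, not merely the closed one (it does, since $\|T_{\mathbf y}(\mathbf x)\|\le\|\mathbf y\|+\tfrac12\|\mathbf x\|<r$), otherwise $\mathcal V=f^{-1}(B(\mathbf 0,r/2))\cap B(\mathbf 0,r)$ might fail to cover $B(\mathbf 0,r/2)$; and the final differentiability step, while standard, deserves the explicit estimate rather than a gesture, since it is precisely where the invertibility of $D_{\mathbf x}f$ for $\mathbf x$ near $\mathbf 0$ (openness of $GL_m$ plus continuity of $Df$) enters. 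None of these are gaps in the idea---just details the sketch elides. You have correctly identified the crux (upgrading the pointwise derivative bound on $\varphi$ to a uniform Lipschitz bound via convexity of the ball), which is indeed what makes the Banach fixed point machinery engage.
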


The following consequence of the IVT  is convenient for our purposes.

\begin{corollary}
\label{cor3}
Let $\mathcal{U}$ be an open set of $\mathbb{R}^n$, $\mathbf u\in \mathcal{U}$ and 
$f\colon \mathcal{U} \rightarrow \mathbb{R}^m$ be a smooth function on $\mathcal{U}$.
If the Jacobian matrix of $f$ at $\mathbf x=\mathbf u$ 
has rank $m$ (equivalently, $D_{\mathbf u}f$ is onto),  then there exists an open neighborhood 
 $\mathcal{V} \subseteq \mathcal{U}$ of $\mathbf u$ and an open neighborhood $\mathcal{W}\subseteq \mathbb{R}^m$ of $f(\mathbf u)$
such that $\mathcal{W}$ is contained in $f(\mathcal{V})$.
\end{corollary}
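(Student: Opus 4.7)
The plan is to reduce \Cref{cor3} to the classical Inverse Function Theorem (\Cref{IVT}) by restricting $f$ to a suitably chosen $m$-dimensional coordinate slice of $\mathcal U$. The hypothesis that the Jacobian has rank $m$ says precisely that some $m$-element subset of columns of $\mbox{Jac}_f$ is linearly independent, and freezing the remaining $n-m$ variables at $\mathbf u$ converts $f$ locally into a map between spaces of equal dimension whose Jacobian is invertible.

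First I would pick indices $j_1<\cdots<j_m$ in $\{1,\dots,n\}$ such that the columns of $\mbox{Jac}_f$ indexed by $I=\{j_1,\dots,j_m\}$ are linearly independent; this is possible because the column space of $\mbox{Jac}_f$ is all of $\mathbb{R}^m$. Write a point $\mathbf x\in\mathbb{R}^n$ as $(\mathbf y,\mathbf z)$ where $\mathbf y\in\mathbb{R}^m$ collects the coordinates indexed by $I$ and $\mathbf z\in\mathbb{R}^{n-m}$ collects the remaining ones, and write $\mathbf u=(\mathbf y_0,\mathbf z_0)$ accordingly. Define
\begin{equation*}
    h\colon \mathcal O \to \mathbb{R}^m, \qquad h(\mathbf y)=f(\mathbf y,\mathbf z_0),
\end{equation*}
where $\mathcal O$ is the open subset of $\mathbb{R}^m$ consisting of those $\mathbf y$ for which $(\mathbf y,\mathbf z_0)\in\mathcal U$. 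Then $h$ is smooth and $\mbox{Jac}_h$ at $\mathbf y_0$ is exactly the $m\times m$ submatrix of $\mbox{Jac}_f$ at $\mathbf u$ consisting of the columns indexed by $I$, and is therefore invertible.

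Next I would apply \Cref{IVT} to $h$ at $\mathbf y_0$. This yields an open neighborhood $\mathcal V_0\subseteq\mathcal O$ of $\mathbf y_0$ such that $h(\mathcal V_0)$ is open in $\mathbb{R}^m$ and contains $h(\mathbf y_0)=f(\mathbf u)$. Now I would inflate $\mathcal V_0$ back to a neighborhood in $\mathbb{R}^n$: choose an open neighborhood $\mathcal Z_0\subseteq\mathbb{R}^{n-m}$ of $\mathbf z_0$ small enough that $\mathcal V:=\mathcal V_0\times\mathcal Z_0\subseteq\mathcal U$ (possible because $\mathcal U$ is open and contains $(\mathbf y_0,\mathbf z_0)$). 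Setting $\mathcal W:=h(\mathcal V_0)$ gives an open neighborhood of $f(\mathbf u)$ in $\mathbb{R}^m$, and since every element of $\mathcal W$ equals $h(\mathbf y)=f(\mathbf y,\mathbf z_0)$ for some $\mathbf y\in\mathcal V_0$ with $(\mathbf y,\mathbf z_0)\in\mathcal V$, we obtain $\mathcal W\subseteq f(\mathcal V)$, as required.

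There is no genuinely hard step here; the argument is essentially a bookkeeping reduction. The only point requiring a bit of care is the product construction in the last paragraph, where one must verify that $\mathcal V$ is actually contained in $\mathcal U$ (handled by taking $\mathcal Z_0$ sufficiently small) and that the image $h(\mathcal V_0)$, a priori only the image of the slice $\mathcal V_0\times\{\mathbf z_0\}$, is indeed contained in $f(\mathcal V)$ (immediate from $\mathcal V_0\times\{\mathbf z_0\}\subseteq\mathcal V$).
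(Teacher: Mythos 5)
Your proof is correct and follows essentially the same route as the paper: select $m$ coordinates whose corresponding columns of $\mathrm{Jac}_f$ are independent, freeze the others at their values in $\mathbf u$, apply the Inverse Function Theorem to the resulting square system, and then inflate the neighborhood back into $\mathbb{R}^n$. Your write-up is a bit more careful than the paper's in explicitly constructing the product neighborhood $\mathcal V=\mathcal V_0\times\mathcal Z_0\subseteq\mathcal U$ and noting why $h(\mathcal V_0)\subseteq f(\mathcal V)$, but the idea is the same.
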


\begin{proof}
Assume $\mathbf u \in \mathcal{U}$ and the  Jacobian of 
 $f$ at $\mathbf u$ has rank $m$.
Then $n\geq m$ and  there exist $m$ variables, 
which we may assume  to be  $x_1, x_2, \ldots, x_m$, 
such that $\mbox{Jac}_f[\{1,\ldots, m\}, \{1,\ldots, m\}]$ is invertible.
The subset $\mathcal{R}=\mathbb{R}^{m \times \{[u_{m_1}, \ldots, u_{n}]\trans\}}$ is homeomorphic 
to $\mathbb{R}^m$, and we can naturally restrict $f$ to $\mathcal{R}$.
We focus on  the restriction of $f$ to $\mathcal{R}$. 
Its Jacobian matrix is an invertible matrix. 

By \Cref{IVT}, there is an open neighborhood $\hat{\mathcal{V}}$ of $\mathcal{R}\cap \mathcal{U}$
containing $\mathbf u$
and an open neighborhood $\mathcal{W}$  such that $\mathcal{W} \subseteq f(\hat{\mathcal{V}})$. 
Taking   $\mathcal{V}$ to be any open set of $\mathcal{U}$ containing $\hat{\mathcal{V}}$ completes the proof. 
\end{proof}

\bigskip\noindent
{\bf Proof of the Superpattern Theorem.}
Let $A$ be an $m\times n$ matrix with the SSVP and let $P$ be a superpattern of the pattern of $A$. Our goal is to find a matrix with pattern $P$ and the same singular values as $A$.  

Define $f\colon \mbox{\skw}(m)\times \mbox{\skw}(n)\times  \mbox{Span} (\mathcal{Z}(P)) \to \mat^{m\times n}$ via
\begin{align*}
    (K,L,B) \mapsto e^{K}Ae^{L} + B.
\end{align*}
We claim that 
$D_{(O,O,O)}f$ is the linear transformation given by 
\[ 
(K,L,B) \mapsto KA + AL + B. 
\]
Note $f(K,L,B)=g(K)Ah(L) +i(B)$,
where $g\colon \skw(m)\rightarrow \mat^{m\times m}$ by  $g(K)=e^K$;
$h\colon \skw(n)\rightarrow \mat^{n\times n}$ by  $h(L)=e^L$;
and $i\colon \mat^{m\times n} \rightarrow \mat^{m\times n}$ by $i(B)=B$.
Since $e^K= I +K + (K^2)/2! + \cdots $,  $D_{O}g(K)=K$.
Similarly, $D_{O}g(L)=L$. Finally, $D_{O}i(B)=B$. 
Hence by the addition and product rules,  
\[ D_{(O,O,O)}f(K,L,B)=
KA +AL +B.\] 
Since $A$ has the SSVP,  \Cref{perp} implies that $D_{(O,O,O)}f$ is onto.

Since $f$ is continuous at $(O,O,O)$, there 
exists  an open neighborhood 
$\mathcal{U} \subseteq \skw (m)\times \skw (n) \times \mbox{Span}(\mathcal{Z}(P))$ of $(O,O,O)$   such that for each $(K,L,B) \in \mathcal{U}$, $A-B$ has the same pattern as $A$. By \Cref{cor3},  there is a open neighborhood $\mathcal{V} \subseteq \mathcal{U}$ of $(O,O,O)$ and an open neighborhood $\mathcal{W} \subseteq \mat^{m\times n}$ of $A=f(O,O,O)$ such that 
\begin{equation}
\label{subseteq}
\mathcal{W} \subseteq f(\mathcal{V}).
\end{equation}
Thus, there exists a matrix $\hat{A} \in \mathcal{W}$ such that $\hat{A}$
has pattern $P$. 
By (\ref{subseteq}), there exists $(K,L,B) \in \mathcal{V}$ such that $f(K,L,B) = \hat{A}$.  Now,
\begin{align*}
    \hat{A} &= e^KAe^L + B\\
    \hat{A}- B &= e^kAe^L.
\end{align*}
By the choice of $\mathcal{U}$,  $A-B$ has the same pattern as $A$. Now, by the choice of $\hat{A}$, the pattern of 
$\hat{A} - B$ is $P$.
Since $K$ and $L$ are skew-symmetric, $e^K$ and $e^L$ are orthogonal. Thus $\hat{A}-B$ has the same singular values as $A$, and $\hat{A}-B$ is the desired matrix.
\qed

\bigskip
\bigskip
We next turn our attention to the Bifurcation Theorem. 
Our goal is to show that if $A$ has the SSVP and pattern $P$, then for each $M$ 
sufficiently close to $A$ there is a matrix $\hat{M}$ with pattern $P$, the SSVP 
and the same singular values as $M$. 

\bigskip \noindent
{\bf Proof of the Bifurcation Theorem.}
Let $A$ be an $m\times n$ matrix with pattern $P$ having the SSVP.  Consider the map $g\colon\skw(m)\times\skw(n) \times \mbox{Span} (\mathcal{Z}(P)) \to \mat^{m\times n}$ via:
    \begin{align*}
        (K,L,B)\mapsto e^K(A+B) e^L.
     \end{align*}
This function shares the same derivative as the function in the previous proof, so we may use \cref{cor3}. There exists be an open neighborhood $\mathcal{U}$ of $(O,O,O)$ such that for each $(K,L,B) \in \mathcal{U}$, $A+B$ has the same pattern as $A$. 
Furthermore,  (by \Cref{open}) $\mathcal{U}$ can be chosen so that each $A+B$
has the SSVP.  By \Cref{cor3} there is an open neighborhood $\mathcal{V}\subseteq \mathcal{U}$ of $(O,O,O)$ and an open neighborhood $\mathcal{W} \subseteq \mat^{m\times n}$ of $A=g(O,O,O)$ such that $\mathcal{W} \subseteq g(\mathcal{V})$. Let $M \in \mathcal{W}$. Then, there is a $(K, L, B) \in \mathcal{V}$ such that
\begin{align*}
    M=g(K,L,B) = e^{K}(A+B)e^{L}.
\end{align*}
Since $e^{K}$ and $e^{L}$ are orthogonal matrices, $A+B$ and $M$ share the same singular values. On the other hand, $B\in \mathcal{V} \subseteq \mathcal{U}$ implies that  $A+B$ and  $A$ have the same pattern.  Therefore, $A+B$ is a matrix in $\mathcal{Z}(P)$ with $\Sigma(M) = \Sigma(A)$. 
\qed 

\bigskip\noindent
{\bf The Matrix Liberation Theorem.}
Finally, we turn our attention to a proof of the Matrix Liberation Theorem. The proof hinges upon a refinement of the inverse function theorem. For $\epsilon>0$, 
$\mathcal{B}({\mathbf u}, \epsilon)$ denotes the open sphere centered at $\mathbf u$
of radius $\epsilon$ in a given Euclidean space.  The following is 
Proposition 2.5.6  of \cite{MRA}.

\begin{proposition}
\label{extendedIVT}
Let $f\colon \mathcal{U} \subseteq E \rightarrow F$ be a function 
between an open set $\mathcal{U}$ of a  finite dimensional Euclidean space $E$ and a finite dimensional Euclidean space $F$, and assume that $f$ is continuously differentiable on $\mathcal{ U}$ and 
 $\mathbf u \in U$.
Then there exist an open set $\mathcal{V}$ of $\mathcal{U}$ containing $\mathbf u$ and  positive constants $c$
and $r$ such that for 
 each ${\mathbf v} \in \mathcal{V}$ and each $p$ with  $r>p>0$ the function
$f$ maps an open subset of $\mathcal{B}( {\mathbf v},cp)$ onto $\mathcal{B}(f({{\mathbf v}}),p)$.
\end{proposition}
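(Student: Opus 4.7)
The proposition (as originally stated in Abraham--Marsden--Ratiu) carries the implicit hypothesis that $D_{\mathbf{u}}f$ is surjective onto $F$; without such a hypothesis the conclusion cannot hold (take $f$ constant), and in the paper's applications (the Superpattern, Bifurcation, and Matrix Liberation Theorems) this surjectivity is exactly what is verified before invoking the result. My strategy is to reduce the quantitative surjection claim to the classical inverse function theorem applied to an auxiliary map on $F$, and then extract uniform constants from the modulus of continuity of $Df$ on a compact neighborhood of $\mathbf{u}$.

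Pick a linear right-inverse $T\colon F \to E$ of $D_{\mathbf{u}}f$ (for example the Moore--Penrose pseudoinverse), which exists because $D_{\mathbf{u}}f$ is a surjective linear map between finite-dimensional spaces, and set $M := \|T\|$. For each $\mathbf{v}$ near $\mathbf{u}$ define
\[
h_{\mathbf{v}}(\mathbf{z}) := f(\mathbf{v} + T\mathbf{z}) - f(\mathbf{v}),
\]
a $C^1$ map from a neighborhood of $\mathbf{0} \in F$ into $F$ with $h_{\mathbf{v}}(\mathbf{0}) = \mathbf{0}$ and $Dh_{\mathbf{v}}(\mathbf{0}) = Df(\mathbf{v})\circ T$. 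At $\mathbf{v} = \mathbf{u}$ this equals $\mathrm{id}_F$; by continuity of $Df$ on a compact neighborhood of $\mathbf{u}$ there exist an open neighborhood $\mathcal{V}$ of $\mathbf{u}$ and a radius $\delta > 0$ such that, for every $\mathbf{v} \in \mathcal{V}$ and every $\mathbf{z} \in \mathcal{B}(\mathbf{0},\delta)$, the operator $Dh_{\mathbf{v}}(\mathbf{z})$ is invertible with $\|Dh_{\mathbf{v}}(\mathbf{z}) - \mathrm{id}_F\| \le \tfrac{1}{2}$ and $\|Dh_{\mathbf{v}}(\mathbf{z})^{-1}\| \le 2$ (Neumann series).

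The classical quantitative inverse function theorem applied to each $h_{\mathbf{v}}$, with the uniform bounds above, yields a radius $r > 0$ such that for every $p \in (0,r)$ and every $\mathbf{y} \in \mathcal{B}(f(\mathbf{v}),p)$ a unique $\mathbf{z}(\mathbf{y}) \in \mathcal{B}(\mathbf{0},2p)$ satisfies $h_{\mathbf{v}}(\mathbf{z}(\mathbf{y})) = \mathbf{y} - f(\mathbf{v})$. Then $\mathbf{x}(\mathbf{y}) := \mathbf{v} + T\mathbf{z}(\mathbf{y})$ satisfies $f(\mathbf{x}(\mathbf{y})) = \mathbf{y}$ and $\|\mathbf{x}(\mathbf{y}) - \mathbf{v}\| \le 2Mp$. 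Taking $c := 2M$, this shows that
\[
\mathcal{O}_{\mathbf{v},p} := f^{-1}\bigl(\mathcal{B}(f(\mathbf{v}),p)\bigr) \cap \mathcal{B}(\mathbf{v},cp)
\]
maps onto $\mathcal{B}(f(\mathbf{v}),p)$ under $f$, and $\mathcal{O}_{\mathbf{v},p}$ is open as the intersection of two open sets.

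The principal obstacle is uniformity: the neighborhood $\mathcal{V}$, the radius $\delta$, and the constants $c$, $r$ must be independent of $\mathbf{v} \in \mathcal{V}$. This uniformity is inherited from the fact that $T$ is a single fixed operator and $Df$ is uniformly continuous on a compact neighborhood of $\mathbf{u}$, so the hypotheses of the IFT applied to $h_{\mathbf{v}}$ hold with the same quantitative data for all $\mathbf{v} \in \mathcal{V}$. A secondary care point is the openness of the set witnessing surjectivity; I finesse this by taking $\mathcal{O}_{\mathbf{v},p}$ as an explicit intersection of two open sets, rather than as the image of the local inverse of $h_{\mathbf{v}}$ translated through $T$ (which would sit inside the affine subspace $\mathbf{v} + T(F)$ and need not be open in $E$ when $\dim E > \dim F$).
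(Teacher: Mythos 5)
The paper does not prove this proposition at all; it simply cites Proposition~2.5.6 of Abraham--Marsden--Ratiu and moves on. Your proposal therefore cannot be compared to ``the paper's own proof,'' but it can be checked against the standard argument, and it holds up. You correctly flag that the statement as transcribed in the paper omits the hypothesis that $D_{\mathbf u}f$ is surjective (without it, a constant $f$ is a counterexample), which is a genuine defect in the paper's wording. Your reduction is the usual one: choose a right-inverse $T$ of $D_{\mathbf u}f$, pull back to $h_{\mathbf v}(\mathbf z) = f(\mathbf v + T\mathbf z) - f(\mathbf v)$ so that $Dh_{\mathbf u}(\mathbf 0) = \mathrm{id}_F$, and get uniformity in $\mathbf v$ from continuity of $Df$ near $\mathbf u$ (continuity at the single point $\mathbf u$ already suffices; you do not strictly need uniform continuity on a compact set, though invoking it is harmless). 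The quantitative IFT/contraction estimate then produces $r = \delta/2$ and a preimage $\mathbf z(\mathbf y)$ with $\|\mathbf z(\mathbf y)\| < 2p$, hence $\mathbf x(\mathbf y) = \mathbf v + T\mathbf z(\mathbf y) \in \mathcal{B}(\mathbf v, 2\|T\|p)$, giving $c = 2\|T\|$. Your final care point is the right one: the image of the local inverse of $h_{\mathbf v}$ pushed through $T$ lives in the affine slice $\mathbf v + T(F)$ and is not open in $E$ when $\dim E > \dim F$, so defining the witness set as $f^{-1}(\mathcal{B}(f(\mathbf v),p)) \cap \mathcal{B}(\mathbf v,cp)$ is exactly the correct fix. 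In short, the proof is correct and fills a gap the paper leaves to a reference; it would be worth adding the surjectivity hypothesis explicitly to the proposition statement.
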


\Cref{extendedIVT} implies
the following result, which can be used to prove the Matrix Liberation Theorem for the SSVP (and the other strong properties as well). 

\begin{theorem}
\label{IVTLiberate}
Let $f:\mathbb{R}^n \rightarrow  
\mathbb{R}^m$ be a smooth function
whose derivative  $Df$ at $\mathbf x= \mathbf 0$ exists and  let $\mathbf d= Df(\mathbf u)$ be a nonzero vector in the tangent space of $f$ at $\mathbf x= \mathbf 0$. 
If 
\begin{equation}
\label{cond} 
\mbox{\rm Im}(Df) + \mbox{\rm Span}(\{ \mathbf e_i: i \mbox{ in support of $f(\mathbf 0)$ or the support of $\mathbf d$ })\}  
= \mathbb{R}^n,
\end{equation}
then there exists $\widehat{\mathbf{x}} \in \mathbb{R}^m$
such that the support of $f(\widehat{\mathbf{x}})$ 
is the union of 
the supports of $f(\mathbf 0)$ and $\mathbf d$.  Moreover, $\widehat{\mathbf{x}}$ 
can be chosen so that the 
\begin{equation}
\label{cond'}
\mbox{Im}(Df_{\widehat{\mathbf{x}}} + \mbox{Span}(\{ \mathbf e_i: i \mbox{ in support of $f(\mathbf 0)$ or the support of $\mathbf d$  }\})  
= \mathbb{R}^n.
\end{equation}
\end{theorem}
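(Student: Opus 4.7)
The strategy is to project away the coordinates in $S := \mathrm{supp}(f(\mathbf 0))\cup\mathrm{supp}(\mathbf d)$ and thereby convert the hypothesis (\ref{cond}) into genuine surjectivity of a derivative, putting us in the setting where Proposition \ref{extendedIVT} applies. Let $S^c = \{1,\dots,m\}\setminus S$, let $\pi:\mathbb R^m\to\mathbb R^{|S^c|}$ be the projection onto the $S^c$-coordinates, and set $g=\pi\circ f$. Because $\mathrm{supp}(\mathbf d)\subseteq S$, we have $\pi(\mathbf d)=\mathbf 0$; and condition (\ref{cond})---that $\mathrm{Im}(Df_\mathbf 0)$ together with the span of the standard basis vectors $\mathbf e_i$ indexed by $S$ fills $\mathbb R^m$---becomes, after applying $\pi$ (which kills the coordinate-vector summand), the statement that $Dg_\mathbf 0=\pi\circ Df_\mathbf 0$ is surjective onto $\mathbb R^{|S^c|}$.

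Apply Proposition \ref{extendedIVT} to $g$ at $\mathbf x=\mathbf 0$. This produces a neighborhood $\mathcal V$ of $\mathbf 0$ and constants $c,r>0$ such that for every $\mathbf v\in\mathcal V$ and $0<p<r$, $g$ carries an open subset of $\mathcal B(\mathbf v,cp)$ onto $\mathcal B(g(\mathbf v),p)$. For small $t>0$, Taylor's formula combined with $\pi(\mathbf d)=\mathbf 0$ yields
\[
g(t\mathbf u)=g(\mathbf 0)+t\,Dg_\mathbf 0(\mathbf u)+O(t^2)=t\,\pi(\mathbf d)+O(t^2)=O(t^2),
\]
so taking $p_t$ slightly larger than $\|g(t\mathbf u)\|$ puts $\mathbf 0\in\mathcal B(g(t\mathbf u),p_t)$. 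The proposition then delivers $\widehat{\mathbf x}_t\in\mathcal B(t\mathbf u,\,cp_t)$ with $g(\widehat{\mathbf x}_t)=\mathbf 0$; since $cp_t=O(t^2)$, we have $\widehat{\mathbf x}_t=t\mathbf u+O(t^2)$.

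Next we verify that $\widehat{\mathbf x}:=\widehat{\mathbf x}_t$ has the required support for small enough $t$. Every coordinate of $f(\widehat{\mathbf x}_t)$ indexed by $S^c$ is $\pi(f(\widehat{\mathbf x}_t))=g(\widehat{\mathbf x}_t)=\mathbf 0$ by construction. For the $S$-coordinates, Taylor expansion gives
\[
f(\widehat{\mathbf x}_t)=f(\mathbf 0)+Df_\mathbf 0(t\mathbf u+O(t^2))+O(t^2)=f(\mathbf 0)+t\mathbf d+O(t^2),
\]
so for $i\in\mathrm{supp}(f(\mathbf 0))$ the $i$-th entry is $f(\mathbf 0)_i+O(t)$, and for $i\in\mathrm{supp}(\mathbf d)\setminus\mathrm{supp}(f(\mathbf 0))$ it is $t\mathbf d_i+O(t^2)$; both are nonzero once $t$ is small enough. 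Hence $\mathrm{supp}(f(\widehat{\mathbf x}_t))=S$, which is the required union.

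For the moreover clause, condition (\ref{cond'}) asserts that a prescribed collection of vectors spans $\mathbb R^m$, equivalently that some fixed $m\times m$ minor built from the columns of $\mathrm{Jac}_f$ at $\widehat{\mathbf x}$ together with the coordinate vectors $\mathbf e_i$, $i\in S$, is nonzero; since this minor is a nonzero continuous function of the base point at $\mathbf 0$ by (\ref{cond}), and $\widehat{\mathbf x}_t\to\mathbf 0$, the condition persists at $\widehat{\mathbf x}_t$ for all sufficiently small $t$. The main obstacle is precisely the opening move: $Df_\mathbf 0$ itself need not be surjective onto $\mathbb R^m$, so Proposition \ref{extendedIVT} is unavailable for $f$ directly. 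Recognizing that (\ref{cond}) is exactly what makes $Dg_\mathbf 0$ surjective is the key idea; once that is in place, the remainder is careful Taylor bookkeeping exploiting that the $O(t^2)$ adjustment to $t\mathbf u$ is negligible next to the $O(t)$ signal $t\mathbf d$ on the $S$-coordinates.
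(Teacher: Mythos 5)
Your proof is correct, and it takes a genuinely different route from the paper's. Where you project the codomain onto the coordinates in $S^c$ (so that hypothesis (\ref{cond}) becomes the literal surjectivity of $Dg_{\mathbf 0} = \pi\circ Df_{\mathbf 0}$), the paper instead augments the domain: it introduces auxiliary variables $\mathbf y$ ranging over the coordinates in $S$ and works with $g(\mathbf x,\mathbf y)=f(\mathbf x)+[\mathbf y;\mathbf 0]$, whose derivative at $(\mathbf 0,\mathbf 0)$ is onto precisely because of (\ref{cond}). The paper then solves $g(\widehat{\mathbf x},\widehat{\mathbf y})=f(\mathbf 0)+t\mathbf d$ and controls $\|\widehat{\mathbf y}\|\le t\tau/2$ to keep the $S$-coordinates nonzero, whereas you solve $g(\widehat{\mathbf x})=\mathbf 0$ and then read the $S$-coordinates off a Taylor expansion $f(\widehat{\mathbf x}_t)=f(\mathbf 0)+t\mathbf d+O(t^2)$. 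These are dual realizations of the same idea. Your version has the advantage that the containment $\mathrm{supp}(f(\widehat{\mathbf x}))\subseteq S$ is forced exactly ($\pi(f(\widehat{\mathbf x}))=\mathbf 0$) rather than by inspection, and that the small-perturbation control is handled uniformly by the asymptotic estimate $\widehat{\mathbf x}_t=t\mathbf u+O(t^2)$ rather than by the explicit constant $\tau$ the paper introduces; the paper's version has the advantage that it never needs to reason about the composition $\pi\circ f$, and it lands exactly on the point $f(\mathbf 0)+t\mathbf d$ rather than an $O(t^2)$-perturbation of it. Both correctly reduce to Proposition \ref{extendedIVT}, and your treatment of the ``moreover'' clause via continuity of a full-rank minor matches the paper's.
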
 

\begin{proof}
Assume that (\ref{cond}) holds. 
Without loss of generality, the support of $\mathbf d$ is $\{1,\ldots, k\}$. Additionally, without loss of generality 
we may assume that $\|\mathbf u\|=1$.
Define $g: \mathbb{R}^n \times \mathbb{R}^k \rightarrow \mathbb{R}^m$ by 
\[g(\mathbf x, \mathbf y)= f(\mathbf x) +
\left[ \begin{array}{c} \mathbf y\\ \mathbf 0
\end{array} \right]. \]  Note that 
\[ 
Dg(\mathbf x, \mathbf y)=Df(\mathbf x) + \left[ \begin{array}{c} \mathbf y \\ \mathbf 0 
\end{array} \right].\]  
The hypotheses imply that $D_{(\mathbf 0,\mathbf 0)}g$ is onto.

Let 
$\tau$ be the minimum of $|d_i|$, $|f(\mathbf 0)_j/ d_j|$, $|(f(\mathbf 0))_k|$ over $i$
such that $d_i\neq 0$, $j$ with $d_j\neq 0$ and 
$f(\mathbf 0)_j \neq 0$ and $k$ for which $\mathbf f(\mathbf 0)_k\neq 0$.

\Cref{extendedIVT} implies that there exists an open neighborhood $\mathcal{V}$ of $(\mathbf 0, \mathbf 0)$ and 
constants $r$ and $c$
such that for all $(\mathbf x, \mathbf y) \in \mathcal{V}$,
and all 
$p$ with $r>p>0$, $\mathcal{B}(g(\mathbf x, \mathbf y),p) \subseteq 
g(\mathcal{B}((\mathbf x,\mathbf y), cp))$.

Choose $p= \min \{\frac{\tau}{2c\|\mathbf d\|}, r/2\}$.
Since $D_{\mathbf 0}f$ exists, for $t>0$ sufficiently small, 
\[
\|f(t \mathbf u)-f(\mathbf 0) - t\mathbf d\|=
\|f(t \mathbf u)-f(\mathbf 0) - D_{\mathbf 0}f (t\mathbf u) \|<tcp\|\mathbf u=tcp
\]
Thus for sufficiently small $t$, 
 \[ f(\mathbf 0) + t\mathbf{d}\in 
 \mathcal{B}( f(t\mathbf u), tcp\|\mathbf d\|)=
 \mathcal{B}(g(t\mathbf u, \mathbf 0), tcp).\]
 Hence there exist $(\widehat{\mathbf{x}},  \widehat{\mathbf{y}}) \in \mathcal{B}((t\mathbf u, \mathbf 0), tp) \subseteq \mathcal{B}((t\mathbf u, \mathbf 0), t\tau/2)$
 with 
$ f(\mathbf 0)+ t\mathbf d= g(\widehat{\mathbf x},\widehat{\mathbf y})$.
Thus
 \[ f(\mathbf 0) + t\mathbf d-\left[ \begin{array}{c} \widehat{\mathbf y}\\ \mathbf 0\end{array} \right] =f(\widehat{\mathbf x}) \in \mbox{Im}(f).
 \] 
As $(\widehat{\mathbf x}, \widehat{\mathbf y}) \in \mathcal{B}((t\mathbf u, \mathbf 0), t\tau/2)$, each entry of $\widehat{\mathbf y}$ has magnitude at most $t\tau/2$.
The definition of $\tau$ now implies that 
the support of $f(\mathbf 0) + t\mathbf d + \widehat{\mathbf{y}}$ is the union of the supports of $f(\mathbf 0 )$ and $\mathbf d$. 
Condition (\ref{cond'}) follows from (\ref{cond})
the invertibility of a linear transformation is preserved by sufficiently small perturbations. 
\end{proof}

\noindent
{\bf Proof of the Matrix Liberation Theorem for the SSVP}

Assume that $A$ is an $m\times n$ matrix and 
$D$ is a nonzero matrix in $\mbox{Tan}_A^{\Sigma}$, $S$ is the pattern of $A$ in the direction of $D$, and $A$ has the SSVD with respect to $S$. 

We apply \Cref{IVTLiberate} to the function $f(K,L)=e^KAe^L$, and observe that the hypothesis 
(\ref{cond}) is equivalent to $\mbox{Tan}_f^{\Sigma} + \mbox{Span}(\mathcal{Z}(S))= \mathbb{R}^{m\times n}$.

\bibliographystyle{elsarticle-harv} 
\bibliography{Refs.bib}
\end{document}